\DeclareSymbolFont{cyrletters}{OT2}{wncyr}{m}{n}
\DeclareMathSymbol{\Sha}{\mathalpha}{cyrletters}{"58}
\newtheorem{thm}[equation]{Theorem}
\newtheorem{lemma}[equation]{Lemma}
\newtheorem{lem}[equation]{Lemma}
\newtheorem{prop}[equation]{Proposition}
\newtheorem{cor}[equation]{Corollary}
\theoremstyle{definition}
\newtheorem{remark}[equation]{Remark}
\newtheorem{hyp}[equation]{Hypothesis}
\newtheorem{notation}[equation]{Notation}
\newtheorem{dfn}[equation]{Definition}
\newtheorem{example}[equation]{Example}
\newcommand{\mbb}[1]{\mathbb #1}
\newcommand{\mc}[1]{\mathcal #1}
\newcommand{\oper}[1]{\operatorname{#1}}
\newcommand{\XX}{\mc X}
\newcommand{\YY}{\mc Y}
\newcommand{\ZZ}{\mc Z}
\newcommand{\wh}{\widehat}
\newcommand{\til}{\widetilde}
\newcommand{\Spec}{\oper{Spec}}
\newcommand{\Gal}{\oper{Gal}}
\newcommand{\cha}{\oper{char}}
\newcommand{\Frac}{\oper{frac}}
\newcommand{\h}{^{\oper{h}}}
\newcommand{\cd}{\oper{cd}}
\newcommand{\tor}{\oper{tor}}
\def\<{\left<}
\def\>{\right>}
\newcounter{itemcounter}
\numberwithin{equation}{section}
\title{Local-global principles for zero-cycles on homogeneous spaces over arithmetic function fields}
\author{J.-L. Colliot-Th\'el\`ene, D. Harbater, J. Hartmann, \\D. Krashen, R. Parimala, and V. Suresh}
\date{\today}
\begin{document}

\thanks{
\textit{Mathematics Subject Classification} (2010): Primary 14C25, 14G05, 14H25; Secondary 11E72,
12G05, 12F10.
\textit{Key words and phrases.} Linear algebraic groups and torsors, zero-cycles, local-global principles, semi-global fields, discrete valuation rings.}

\maketitle

\begin{abstract}
We study the existence of zero-cycles of degree one on 
varieties that are defined over a function field of a curve over a 
complete discretely valued field. We show that 
local-global principles hold for such zero-cycles provided that 
local-global principles hold for the existence of rational points over 
extensions of the function field.  This assertion is analogous to a 
known result concerning varieties over number fields.  Many of our results are shown to hold more generally in the henselian case.
\end{abstract}

%--------------------------------------------------------------------------------------------------------------------------------
\section{Introduction}
%--------------------------------------------------------------------------------------------------------------------------------

The study of rational points on varieties is a fundamental subject of arithmetic geometry. Local-global principles (and their obstructions) are one of the main tools in understanding whether a rational point exists. A related object of study is the index of an algebraic variety, and one may ask whether it is equal to one, i.e., whether the variety admits a zero-cycle of degree one. Equivalently,  for every prime $\ell$, does there exist a point defined over a finite field extension of degree prime to $\ell$? 

In this paper, we consider varieties over certain two-dimensional fields; in particular, one-variable function fields over complete discretely valued fields (so-called {\em semi-global} fields). These fields are amenable to patching methods. Local-global principles for rational points on homogeneous spaces over such fields were studied for example in \cite{CTPS12}, \cite{CTPS16}, \cite{HHK:H1}. Here we exhibit several situations where such local-global principles imply corresponding local-global statements for zero-cycles. Parallel results over number fields were first obtained by Liang (Prop.~3.2.3 in~\cite{liang}; see also \cite{CTAWS}, Section~8.2). However, our situation involves substantial new difficulties to overcome.

A semi-global field admits several natural collections of overfields with respect to which local-global principles can be studied. After a choice of normal projective model of the semi-global field, one may consider two distinct
collections of overfields: the first associated with {\em patches} on such a model (described in the beginning of Section~\ref{LGP sec}), and the second consisting of overfields which are fraction fields of complete local rings at points of the closed fiber of the model. Finally, as in the number field case, one may also work with the set of completions with respect to discrete valuations.

Consider a scheme~$Z$ of finite type over a semi-global field~$F$. We show (Theorem~\ref{sep LGP prime patches}) that in the case of overfields coming from patching, a local-global principle for rational points for the base change $Z_E$ to all finite separable field extensions $E/F$ implies a local-global principle for separable zero-cycles of degree one over $F$ (or analogously, of degree prime to some given prime $\ell$). This also gives a local-global principle for the (separable) index of $Z$ (Corollary~\ref{sep LGP patches}). The analogous results hold when the collection of overfields under consideration comes from points of the closed fiber of a model as described above (Theorem~\ref{LGP closed fiber}). Moreover, these latter results extend to the case of function fields of curves over excellent henselian discrete valuation rings. In particular, we obtain local-global principles for zero-cycles in that situation; see Proposition~\ref{LGP cl fiber henselian}. 
The required local-global hypothesis for rational points over a semi-global field holds, for instance, if $Z$ is a torsor under a linear algebraic groups that is connected and rational (see Corollary~\ref{LGP ex}).

In the situation where $Z$ is a principal or projective homogeneous space under a linear algebraic group, we also obtain local-global principles with respect to discrete valuations (Theorem~\ref{LGPdvr}) under additional hypotheses on $F$ (e.g., when $F$ is the function field of a curve over a complete discretely valued field with algebraically closed residue field of characteristic zero). In certain cases, the existence of local zero-cycles of degree one already implies the existence of a global rational point (Theorem~\ref{cd2tors}, Theorem~\ref{LGPproj}). 

The results are obtained using a combination of methods. Many of the local-global statements rely on descent results that we prove for finite field extensions and for the existence of rational points,
in context of a pair of fields $L \subseteq L'$ and a finite separable field extension $E'/L'$.
In the former type of descent result (e.g., Proposition~\ref{branch to point}), 
we find a finite separable field extension $E/L$ such that $E \otimes_L L' \cong E'$.  In the latter type (e.g.,  Proposition~\ref{globalize_FP_extension}), given an $L$-scheme $Z$ with an $E'$-point, we find an appropriate finite separable field extension $E/L$ such that $Z$ has a point over $E \otimes_L L'$.  Descent of extensions of fields arising in the context of patching is also studied in \cite{CTHHKPS:Kampen}, which builds on the results here.  For our local-global principles with respect to discrete valuations, we also use structural properties of linear algebraic groups, as well as results about nonabelian cohomology in degree two.

The manuscript is organized as follows. Section~\ref{LGP sec} contains descent results. The first two subsections concern fields occurring in patching. We show that finite separable field extensions of some of these overfields descend to the semi-global field $F$ (Subsection~\ref{extension descent subsec}); in other cases it is still possible to descend the existence of points (Subsection~\ref{points descent subsec}; in particular, see Prop.~\ref{globalize_FP_extension}). Subsection~\ref{local descent subsec} contains local descent results.
Local-global principles for zero-cycles are proven in Section~\ref{lgp sec}; this is first done with respect to patches and points  on arithmetic curves over complete discrete valuation rings (Subsection~\ref{lgp patches points subsec}), and then generalized to excellent henselian valuation rings (Subsection~\ref{lgp henselian subsec}). In certain cases we obtain local-global principles with respect to discrete valuations (Subsections~\ref{lgp dvr subsec} and~\ref{lgp dvr alg cl subsec}), for principal and projective homogeneous spaces over certain 2-dimensional fields, including semi-global fields.

Research on this subject was partially carried out during a visit of the authors at the American Institute of Mathematics (AIM). We thank AIM for the productive atmosphere and wonderful hospitality.

%--------------------------------------------------------------------------------------------------------------------------------
\section{Descent  results}\label{LGP sec}
%--------------------------------------------------------------------------------------------------------------------------------

This section contains descent results which will be essential in proving the local-global principles in the following section. We first consider collections of fields coming from patching.
We recall the following notation, which was established in \cite{HH:FP}, \cite{HHK}, and \cite{HHK:refinements}: 

\begin{dfn} \label{basic def}
Let $K$ be a discretely valued field with valuation ring $T$, uniformizer~$t$, and residue field $k$. Let $F$ be a one-variable function field over $K$; i.e., a finitely generated field extension of transcendence degree one in which $K$ is algebraically closed.
A {\em normal model} of $F$ is an integral $T$-scheme $\XX$ with function field $F$ that is flat and projective over $T$ of relative dimension one, and that is normal as a scheme. If in addition the scheme $\XX$ is regular, we say that $\XX$ is a {\em regular model}. The {\em closed fiber} of $\mc X$ is $\mc X_k := \mc X \times_T k$.  

In the case that the discrete valuation ring $T$ is complete, we call $F$ a {\em semi-global field}.  (We will also often consider the more general case that $T$ is excellent and henselian.) 
\end{dfn}

The following notation will be used throughout this manuscript.

\begin{notation} \label{basic notation}
In the context of Definition~\ref{basic def}, let $\XX$ be a normal model for $F$. 
If $P$ is a (not necessarily closed) point of the closed fiber $X$ of $\XX$, let $R_P$ be the local ring of $\XX$ at $P$; let $\wh R_P$ be its completion with respect to the maximal ideal $\frak m_P$; and let $F_P$ be the fraction field of $\wh R_P$.  In the case that $P$ is a closed point of $X$, the {\em branches} of $X$ at $P$ are the height one prime ideals of $\wh R_P$ that contain $t$.  We write $R_\wp$ for the local ring of $\wh R_P$ at a branch $\wp$.  This  is a discrete valuation ring.  We write $\wh R_\wp$ for its completion, and $F_\wp$ for the fraction field of $\wh R_\wp$.

If $U$ is a non-empty connected affine open subset of $X$, 
then we write $R_U$ for the subring of $F$ consisting of rational functions that are regular at each point of $U$.  We let $\wh R_U$ be the $t$-adic completion of $R_U$. 
This is an integral domain by \cite[Proposition~3.4]{HHK:refinements}, and we let $F_U$ be the fraction field of $\wh R_U$.  
If $P \in U \subseteq U'$, then $\wh R_{U'} \subseteq \wh R_U \subset \wh R_P$ and $F_{U'} \subseteq F_U \subset F_P$.
\end{notation}

The spectra of the rings $\wh R_P$ and $\wh R_U$ above are thought of as ``patches'' on $\XX$.  For the sake of readers who are familiar with rigid geometry, we remark that our fields $F_P$ and $F_U$ are the same as the rings of meromorphic functions on the corresponding rigid open and rigid closed affinoid sets obtained by deleting the closed fiber from the patches; see \cite{Ray}, concerning the relationship between formal schemes and rigid analytic spaces.

%--------------------------------------------------------------------------------------------------------------------------------
\subsection{Descent of field extensions}\label{extension descent subsec}
%--------------------------------------------------------------------------------------------------------------------------------

The following two statements generalize Proposition 3.5 of \cite{HHK:refinements}; by the {\em trivial \'etale algebra (of degree $n$)} over a field $L$ we mean the direct product of $n$ copies of $L$.

\begin{prop}\label{branch to point}
Let $\XX$ be a normal model of a semi-global field $F$, 
let $P$ be a closed point of $\XX$, let $\wp$ be a branch of the closed fiber $X$ at~$P$, and
let $E_\wp$ be a finite separable field extension of $F_\wp$.  Then 
there exists a finite separable field extension $E_P$ of $F_P$ such that 
$E_P \otimes_{F_P} F_\wp\cong E_\wp$ as extensions of~$F_P$, and such that 
$E_P$ induces the trivial \'etale algebra over $F_{\wp'}$ for every other branch
$\wp'$ at $P$.
\end{prop}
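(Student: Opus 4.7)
The plan is to construct $E_P$ as $F_P[x]/(f(x))$ for a single monic polynomial $f \in F_P[x]$ of degree $n := [E_\wp : F_\wp]$, chosen to be simultaneously close to the minimal polynomial of a primitive generator of $E_\wp/F_\wp$ at the branch $\wp$, and close to a polynomial that splits into $n$ distinct linear factors at each other branch. The verification will then reduce to two classical deformation arguments: Krasner's lemma to recover the extension $E_\wp$ at $\wp$, and Hensel's lemma to split $f$ at the other branches.

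More precisely, first write $E_\wp = F_\wp[x]/(f_\wp(x))$ with $f_\wp \in F_\wp[x]$ monic and separable of degree $n$. For each other branch $\wp_i$ at $P$ (with $i = 2, \ldots, r$), pick $n$ pairwise distinct elements $a_{i,1}, \ldots, a_{i,n} \in F_{\wp_i}$ and set $g_i(x) := \prod_{j=1}^n (x - a_{i,j})$. The core step is then to produce a monic polynomial $f \in F_P[x]$ of degree $n$ whose coefficients approximate those of $f_\wp$ over $F_\wp$ and those of each $g_i$ over $F_{\wp_i}$ to any prescribed accuracy. Since $T$ is complete (hence excellent) and $R_P$ is normal, the completion $\wh R_P$ is a normal two-dimensional local domain, so distinct branches at $P$ correspond to distinct height-one primes of $\wh R_P$ and hence to pairwise inequivalent discrete valuations on $F_P$. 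Classical weak approximation for this finite family of inequivalent valuations on $F_P$, combined with the density of $F_P$ in each completion $F_{\wp_i}$, then supplies such an $f$ one coefficient at a time.

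If the approximation is tight enough: (i) $\disc(f)$ is close to $\disc(f_\wp) \neq 0$, so $f$ is separable; (ii) by Krasner's lemma over $F_\wp$ one has $F_\wp[x]/(f(x)) \cong E_\wp$ as extensions of $F_\wp$, so $f$ is irreducible over $F_\wp$ and hence over $F_P$, and thus $E_P := F_P[x]/(f(x))$ is a degree-$n$ separable field extension of $F_P$ with $E_P \otimes_{F_P} F_\wp \cong E_\wp$; (iii) by Hensel's lemma applied at each simple root $a_{i,j}$ of $g_i$ over $F_{\wp_i}$, the polynomial $f$ factors into $n$ distinct monic linear factors in $F_{\wp_i}[x]$, so $E_P \otimes_{F_P} F_{\wp_i}$ is the trivial \'etale algebra of degree $n$. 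The main obstacle is the first step: verifying that the branches at $P$ really give inequivalent valuations on $F_P$, which requires normality of $\wh R_P$ (and thus excellence of $T$); once this is in place, classical weak approximation produces $f$ and the Krasner/Hensel checks are routine.
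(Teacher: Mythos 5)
Your proposal is correct and follows essentially the same route as the paper: approximate the minimal polynomial of a primitive element of $E_\wp$ at $\wp$ and completely split polynomials at the other branches, using the density of $F_P$ in the product of the branch completions (the paper cites Bourbaki's approximation theorem where you re-derive weak approximation from the inequivalence of the branch valuations), then conclude by Krasner's lemma. Your use of Hensel's lemma at the split branches and the discriminant check for separability are just explicit versions of the same verification the paper subsumes under Krasner's lemma.
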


\begin{proof}  
Let $f_\wp(x) \in F_\wp[x]$ be the minimal polynomial of a primitive element of $E_\wp$.
For each other branch $\wp'$ at $P$, let $f_{\wp'}(x)\in F_{\wp'}[x]$ be a separable polynomial of the same degree that
splits completely over $F_{\wp'}$ and thus defines the trivial \'etale algebra over $F_{\wp'}$.  
The field $F_P$ is dense in $\prod F_{\wp'}$ by Theorem VI.7.2.1 of \cite{Bourbaki},
where the product ranges over all the branches 
at $P$ (including $\wp$).  So applying Krasner's Lemma (e.g., \cite[Prop.~II.2.4]{Lang}) to the above polynomials yields the 
desired extension of $F_P$, which is a field since $E_\wp$ is.
\end{proof}

\begin{prop} \label{component to global}
Let $\XX$ be a normal model of a semi-global field $F$, let $U$ be a non-empty connected affine open subset of the closed fiber of $\XX$, and let $E_U$ be a finite separable field extension of $F_U$.  Then there is a finite separable field extension $E$ of $F$ such that $E \otimes_F F_U \cong E_U$ as extensions of~$F_U$.
\end{prop}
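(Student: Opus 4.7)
The plan is to apply Krasner's lemma in $F_U$ to a $t$-adic approximation, over $F$, of a defining polynomial for the extension $E_U/F_U$.

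By the primitive element theorem, write $E_U = F_U(\alpha)$ where $\alpha$ has monic separable irreducible minimal polynomial $f_U(x) \in F_U[x]$ of degree $n = [E_U : F_U]$. After replacing $\alpha$ by $t^N \alpha$ for $N$ sufficiently large, we may assume that all coefficients of $f_U$ lie in $\wh R_U$ (the leading coefficient remains~$1$). Since $R_U$ is $t$-adically dense in $\wh R_U$ by the definition of the latter as the $t$-adic completion, we can approximate each non-leading coefficient of $f_U$ by an element of $R_U \subseteq F$ to arbitrary $t$-adic precision. This produces a monic polynomial $f(x) \in F[x]$ of degree~$n$ whose coefficients are $t$-adically close to those of $f_U$; in particular, for good enough approximations, the discriminant of $f$ is close to the nonzero discriminant of $f_U$, so $f$ is separable.

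The main step is to apply Krasner's lemma in the valued field $(F_U, v_t)$, where $v_t$ denotes the $t$-adic valuation extending the one on $\wh R_U$. Taking $f$ sufficiently close to $f_U$, one concludes that $f$ is irreducible over $F_U$ and that $F_U[x]/(f) \cong F_U[x]/(f_U) = E_U$ as $F_U$-algebras. In particular, $f$ is also irreducible over $F$, so setting $E = F[x]/(f)$ defines a finite separable field extension of $F$ satisfying $E \otimes_F F_U \cong F_U[x]/(f) \cong E_U$, as required.

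The main obstacle is establishing the Henselian property of $F_U$ with respect to the $t$-adic topology that justifies invoking Krasner's lemma. Unlike $F_P$ and $F_\wp$, the field $F_U$ is not the fraction field of a one-dimensional complete local ring: $\wh R_U$ is a higher-dimensional Noetherian domain, so one must appeal to the fact that $(\wh R_U,(t))$ is a Henselian pair (as $\wh R_U$ is Noetherian and $t$-adically complete) and verify that this transfers to the Henselian property on $F_U$ needed to apply Krasner to the monic polynomial $f_U$ and its approximation $f$.
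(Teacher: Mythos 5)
The decisive gap is the Krasner step, which is not just unjustified but false: $F_U$ with the $t$-adic valuation is not Henselian, and the Henselian pair property of $(\wh R_U, t\wh R_U)$ does not transfer, because the $t$-adic valuation ring of $F_U$ is the localization $(\wh R_U)_{(t)}$, which is much larger than $\wh R_U$ and is not Henselian. Concretely, take $T=k[[t]]$, $\XX=\mathbb{P}^1_T$, and $U$ the affine line in the closed fiber, so that $\wh R_U$ is (a localization of) the $t$-adic completion of $T[x]$. The monic polynomial $y^2-(1+t/x)$ has coefficients in the valuation ring and a simple root modulo $t$, yet has no root in $F_U$: a root would give $xa^2=(x+t)b^2$ with $a,b\in\wh R_U$, which is impossible since $(x+t)$ is a prime of $\wh R_U$ (the quotient is $k[[t]]$) dividing the right-hand side to odd order and the left-hand side to even order. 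Worse for your argument, the conclusion of Krasner itself fails: for every $N$, the polynomials $y^2-(1+t/x)$ and $y^2-(1+t/x+t^Nx)$ are congruent modulo $t^N$ but define non-isomorphic quadratic extensions of $F_U$, because $(x+t)(x+t+t^Nx^2)$ is divisible exactly once by the prime $(x+t)$ and hence is not a square in $F_U$. So no degree of $t$-adic approximation of $f_U$ by a polynomial over $F$ forces the approximating polynomial to generate $E_U$, and the strategy cannot be repaired by quantifying ``sufficiently close.'' Two further problems: if $U$ meets more than one irreducible component of the closed fiber (or the fiber is non-reduced along $U$), then $(t)$ is not prime in $\wh R_U$ and the $t$-adic function is not a valuation at all; and replacing $\alpha$ by $t^N\alpha$ does not make the coefficients of $f_U$ integral, since denominators of elements of $F_U=\Frac(\wh R_U)$ are arbitrary elements of $\wh R_U$ rather than powers of $t$ (this part is fixable by scaling by a common denominator).

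The paper's proof avoids precisely this trap by never applying Krasner over $F_U$. Krasner is invoked only over the branch fields $F_\wp$, which are fraction fields of complete discrete valuation rings, where it is legitimate: by Proposition~\ref{branch to point}, the algebra $E_U\otimes_{F_U}F_\wp$ at each branch $\wp$ of $\bar U$ at a point $P\in\bar U\smallsetminus U$ (after a blow-up making $\bar U$ unibranched there) descends to a finite \'etale $F_P$-algebra that is trivial at the other branches; trivial algebras are chosen at the remaining points and components, and then patching for finite \'etale algebras (Proposition~3.7 and Example~2.7 of \cite{HHK:refinements}) produces the global extension $E/F$ inducing $E_U$. Any direct approximation argument over $F_U$ would need a substitute for Henselianity of $F_U$, and the example above shows there is none.
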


\begin{proof}
Let $\bar U$ be the closure of $U$ in the closed fiber $X$ of $\mc X$.  
After blowing up $\XX$ at the points of $\mc P_U:= \bar U \smallsetminus U$ (which changes the model $\XX$ but does not change $F$, $U$, or $F_U$), we may assume that $\bar U$ is unibranched at each point of $\mc P_U$ (see \cite{Lip}, Lecture~1).  Let $\mc P$ be a finite set of closed points of $X$ that satisfies $\mc P \cap \bar U = \mc P_U$, and which contains at least one point on each irreducible component of $X$.  
Let $\mc U$ be the set of connected components of $X \smallsetminus \mc P$.  Then $U\in \mc U$, and each element of $\mc U$ is affine.

For each point $P \in \mc P_U$, consider the unique branch $\wp$ at $P$ on $U$.
Then $E_\wp := E_U \otimes_{F_U} F_\wp$ 
is a finite direct product of finite separable field extensions $E_{\wp, i}$ of $F_\wp$.  
By Proposition~\ref{branch to point}, for each $i$ there is a finite separable field extension $E_{P,i}$ of $F_P$ 
such that $E_{P,i} \otimes_{F_P} F_\wp \cong E_{\wp, i}$ and such that $E_{P,i} \otimes_{F_P} F_{\wp'}$ is a 
trivial \'etale algebra over $F_{\wp'}$ for every other branch $\wp'$ at $P$.
So the direct product of the fields $E_{P,i}$ (ranging over $i$) is a finite \'etale $F_P$-algebra $E_P$ 
satisfying $E_P \otimes_{F_P} F_\wp \cong E_\wp$ and such that 
$E_P \otimes_{F_P} F_{\wp'}$ is the trivial \'etale algebra of degree $n := [E_U:F_U]$ over $F_{\wp'}$.  Here $E_P$ is well 
defined for each $P \in \mc P_U$, since $\wp$ is unique given $P$.  

For every $P \in \mc P$ that is not in $\mc P_U$, let $E_P$ be the trivial \'etale algebra of degree $n$ over 
$F_P$.  Similarly, for every $U' \in \mc U$ other than $U$ let $E_{U'}$ be the trivial \'etale algebra of 
degree $n$ over $F_{U'}$, and for every branch $\wp $ at a point that is not in $\bar U$ let $E_\wp$   
be the trivial \'etale algebra of degree $n$ over $F_\wp$.  Thus for {\em every} branch $\wp $ at a point $P \in \mc P$ lying on some $U' \in \mc U$ (including the case $U'=U$), we have isomorphisms $E_P \otimes_{F_P} F_\wp \cong E_\wp \cong E_{U'} \otimes_{F_{U'}} F_\wp$.  But patching holds for finite separable algebras in this context; see Proposition~3.7 and Example 2.7 in \cite{HHK:refinements}. 
So there is a finite \'etale $F$-algebra $E$ that compatibly induces all the algebras $E_P, E_{U'}, E_\wp$.  Since $E_U$ is a field, so is $E$. 
\end{proof}

The above two propositions suggest analogous statements in which the roles of $P$ and $U$ are interchanged:  The analog of Proposition~\ref{branch to point} would assert that if $\wp$ is a branch at a point $P$ in $\bar U \smallsetminus U$, then every finite separable field extension of $F_\wp$ would be induced by a finite separable field extension of $F_U$.  The analog of Proposition~\ref{component to global} would say that every finite separable field extension of $F_P$ is induced by a finite separable field extension of $F$.  This does not hold in general, as the following example shows.

\begin{example}
Let $T$ denote the complete discrete valuation ring $k[[t]]$, where $k$ is a field of characteristic $p>0$.
Let $\XX$ be the projective $x$-line over $T$, and let $P$ be the origin on the projective $k$-line $X$. Then $F_P$ equals $k((t,x))$, the fraction field of $k[[t,x]]$.  Consider the field extension $E_P/F_P$ generated by the solutions of $y^p-y=\frac{\alpha}{t}$, where $\alpha$ is a transcendental power series (i.e., a power series in $x$ transcendental over $F=k((t))(x)$). Then one can show that $E_P$ is not induced by an extension of $F$ in the above sense.
In fact, this example is a special instance of \cite{CTHHKPS:Kampen}, Lemma 2.14, to which we refer the reader for a proof.

\end{example}

In \cite[Section~2]{CTHHKPS:Kampen}, it is shown that versions of descent for field extensions with the roles of $P$ and $U$ interchanged do hold when the residue field $k$ of $K$ has characteristic zero, and that as a consequence there are local-global principles in that situation.  To treat the more general case, 
we consider a different type of descent in the next subsection.

%--------------------------------------------------------------------------------------------------------------------------------
\subsection{Descent of existence of  points}\label{points descent subsec}
%--------------------------------------------------------------------------------------------------------------------------------

In order to prove a local-global principle (with respect to patches) for zero-cycles in arbitrary characteristic, we prove a descent result for the existence of points instead of field extensions (Proposition~\ref{globalize_FP_extension} below).  Specifically, let 
$T$ be an excellent henselian (e.g., complete) discrete valuation ring, and choose a normal model $\XX$ of a one-variable function field $F$ over the fraction field $K$ of $T$.  In the context of
Notation~\ref{basic notation}, we show that if 
$E_P/F_P$ is a separable field extension whose degree is not divisible by some prime number $\ell$, 
and if $Z$ is an $F$-scheme of finite type
which has an $E_P$-point, then there is a finite separable field extension $E/F$ of degree prime to $\ell$ such that $Z$ has a point over $E \otimes_F F_P$.  
First, some preparation is needed.

For $P$ a point on the closed fiber of $\XX$ as above, the henselization $R_P\h$ of $R_P$
is the same as the henselization at $P$ of the coordinate ring of an affine open subset of $\XX$ that contains $P$.  Since that coordinate ring is of finite type over the excellent henselian discrete valuation ring~$T$, Artin's Approximation Theorem (Theorem~1.10 of \cite{artin}) applies to a system of polynomial equations over $R_P\h$, and asserts that if there is a solution over $\wh R_P$ then there is a solution over $R_P\h$.  By clearing denominators, the same assertion holds with $\wh R_P$ and $R_P\h$ replaced by $F_P$ and $F_P\h$, where $F_P\h$ is the fraction field of~$R_P\h$ (this being a separable field extension of $F$).  We use this in the proof of the next proposition.

Note that we may pick a fixed algebraic closure $\bar F_P$ of $F_P$, and let $\bar F$ be the algebraic closure of $F$ in $\bar F_P$.  Thus $\bar F$ is an algebraic closure of $F$ that contains $F_P\h$.

\begin{prop} \label{henselization}
Let $T$ be an excellent henselian discrete valuation ring with fraction field $K$.  Let $F$ be a one-variable function field over $K$ (e.g., a semi-global field, if $K$ is complete), and let $\XX$ be a normal model
of $F$.  Let $P$ be a (not necessarily closed) point of the closed fiber of $\XX$, and let $Z$ be an $F$-scheme of finite type.
Let $\ell$ be a prime number, and suppose that there is a finite separable field extension $E_P/F_P$ of degree prime to $\ell$ such that $Z(E_P)$ is non-empty.  Then there is a finite separable field extension $E_P'/F_P\h$ of degree prime to $\ell$ such that $Z(E_P')$ is non-empty.
\end{prop}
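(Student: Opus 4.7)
The plan is to encode both the choice of a separable prime-to-$\ell$ extension of $F_P$ and the existence of a $Z$-point over that extension as a single system of polynomial equations over $F$, and then to apply the version of Artin approximation recalled immediately before the statement in order to upgrade the given $F_P$-solution to one over $F_P\h$.

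First I would reduce to the case where $Z$ is affine, by covering $Z$ with affine opens defined over $F$ and replacing it by the one through which the given $E_P$-point factors. Writing the result as $Z = \Spec F[y_1,\ldots,y_m]/(g_1,\ldots,g_r)$, I would then choose a primitive element $\alpha$ for $E_P/F_P$ with monic minimal polynomial $f(x)\in F_P[x]$ of degree $n$, so that $E_P \cong F_P[x]/(f)$, and express the given point as $(a_1,\ldots,a_m)\in E_P^m$ with each $a_j = \sum_{k<n}c_{jk}\alpha^k$ and $c_{jk}\in F_P$.

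The core step is setting up the system. With indeterminates $\vec b = (b_0,\ldots,b_{n-1})$, $\vec c = (c_{jk})$, and $u$, put $\tilde f(x) = x^n+\sum_l b_l x^l$ and $\tilde a_j(x) = \sum_k c_{jk}x^k$, and consider
\[
g_i(\tilde a_1,\ldots,\tilde a_m) \equiv 0 \pmod{\tilde f}, \qquad u \cdot \disc(\tilde f) = 1.
\]
Because $\tilde f$ is monic, polynomial long division in $x$ converts the congruences into finitely many honest polynomial equations with coefficients in $F$, while the $u$-equation enforces separability of $\tilde f$ without leaving the polynomial category. Thus all coefficients lie in $F \subset F_P\h$, and by construction $(b_l, c_{jk}, 1/\disc(f))$ provides a solution over $F_P$. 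Artin approximation therefore yields a solution $(\vec b', \vec c', u')$ with coordinates in $F_P\h$.

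To finish, set $f' = x^n+\sum b'_l x^l$; then $A' := F_P\h[x]/(f')$ is finite \'etale of degree $n$ over $F_P\h$ (separable since $\disc(f')\neq 0$), and therefore decomposes as a product $\prod_i E'_{P,i}$ of finite separable field extensions with $\sum[E'_{P,i}:F_P\h] = n$. The elements $a'_j = \sum_k c'_{jk}\bar x^k$ satisfy $g_i(\vec a')=0$ in $A'$ by construction, so they define a point of $Z(A') = \prod_i Z(E'_{P,i})$. Since $\gcd(n,\ell)=1$, at least one of the degrees $[E'_{P,i}:F_P\h]$ is prime to $\ell$, and the corresponding factor serves as the desired $E_P'$. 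The main obstacle, as I see it, is not Artin approximation itself but the bookkeeping required to encode ``separable field extension of a specified degree'' by a polynomial system: separability is captured by the $u\cdot\disc(\tilde f)=1$ trick, while irreducibility cannot be so encoded — so one must accept a product of fields after approximation and exploit $\gcd(n,\ell)=1$ to extract a factor of suitable degree at the end.
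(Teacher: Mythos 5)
Your argument is correct, and it reaches the conclusion by a somewhat different encoding than the paper's, although both hinge on the same recalled form of Artin approximation (system over a subfield of $F_P\h$, solution over $F_P$ upgraded to one over $F_P\h$) and on the same final observation that a sum of field degrees prime to $\ell$ must have a summand prime to $\ell$. The paper first passes to a minimal prime-to-$\ell$ degree $d$, packages the $d$ Galois conjugates of the given point as an $F_P$-point of the auxiliary $F$-scheme $S^d(Z)\smallsetminus \mc D$ (the symmetric power with the image of the big diagonal removed), applies approximation to an affine chart of that scheme, lifts back to $Z^d\smallsetminus\Delta$, and extracts a Galois orbit of size prime to $\ell$; minimality is what forces the original tuple off the diagonal, and distinctness of the entries is what yields separability of $E_P'$. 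You instead parametrize the data ``monic degree-$n$ polynomial together with a solution of $Z$'s equations modulo it'' by explicit indeterminates, enforce separability by the equation $u\cdot\disc(\tilde f)=1$ rather than by minimality plus distinctness, and accept that after approximation the algebra $F_P\h[x]/(f')$ may split, extracting a prime-to-$\ell$ factor at the end. Your route is more elementary and needs no minimality hypothesis and no symmetric powers (your system is in effect an explicit affine chart of essentially the same moduli problem); the paper's coordinate-free formulation has the side benefit that, when $[E_P:F_P]$ is minimal, the single-orbit argument gives $[E_P':F_P\h]=[E_P:F_P]$ (the remark following the proposition), a refinement that is not immediate from your product-of-fields outcome, though it could be recovered by the same minimality reasoning. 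One small point of care in your write-up, which you handle correctly: the congruences must be converted to equations by division against the \emph{monic} $\tilde f$, so that the remainder coefficients are genuine polynomials over $F$ in the indeterminates and the specialization at the $F_P$-solution is the reduction in $E_P\cong F_P[x]/(f)$.
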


\begin{proof}
Let $d$ be the smallest positive integer that is prime to $\ell$ such that there is a finite separable field extension $E_P/F_P$ of degree $d$ for which $Z$ has an $E_P$-point $\xi$.  Since $E_P$ is separable over $F_P$, there are exactly $d$ distinct $F_P$-embeddings $\sigma_1,\dots,\sigma_d$ of $E_P$ into an algebraic closure $\bar F_P$ of $F_P$.  By the minimality of $d$ and $E_P$, the point $(\sigma_1(\xi),\dots,\sigma_d(\xi)) \in Z^d(\bar F_P)$ does not lie on the closed subset $\Delta \subset Z^d$ where two or more of the entries are equal.  Consider the image $\mc D \subset S^d(Z)$ of $\Delta$ in the $d$-th symmetric power of $Z$; i.e., 
$S^d(Z) \smallsetminus \mc D = (Z^d \smallsetminus \Delta)/S_d$.  The image $\zeta \in S^d(Z) \smallsetminus \mc D$ of $(\sigma_1(\xi),\dots,\sigma_d(\xi))$ is an $F_P$-point on this $F$-scheme, corresponding to a morphism $\Spec(F_P) \to S^d(Z) \smallsetminus \mc D$.  The image of this morphism is a point of (the underlying topological space of) $S^d(Z) \smallsetminus \mc D$, and this lies in some affine open subset 
$\Spec(A) \subseteq S^d(Z) \smallsetminus \mc D$.  This point corresponds to a solution over $F_P$ to a system of polynomial equations over $F$ that defines $A$.  

By Artin's Approximation Theorem, there is a solution to this system of equations over the field $F_P\h$.  This corresponds to an $F_P\h$-point $\zeta'$ on $S^d(Z) \smallsetminus \mc D$.  Pick a point on $Z^d \smallsetminus \Delta$ that maps to $\zeta'$.  Each entry lies in an algebraic closure of $F_P\h$, or equivalently of $F$.  The $d$ entries are distinct, and this set of entries is stable under the absolute Galois group of $F_P\h$, since $\zeta'$ is defined over $F_P\h$.  Thus the entries form a disjoint union of orbits under this absolute Galois group, say of orders $d_1,\dots,d_r$, with $\sum_i d_i = d$.  Since $d$ is prime to $\ell$, so is some $d_i$.  Let $\xi'$ be an entry lying in the $i$-th orbit; this defines a point of $Z$, say with 
field of definition $E_P'$.  Then the field $E_P'$ is separable over $F_P\h$ by the distinctness of the entries; the degree of $E_P$ over $F_P\h$ is $d_i$, which is prime to $\ell$; and $\xi'$ is an $E_P'$-point of $Z$.
\end{proof} 

The above proof actually shows more, viz.~that if $[E_P:F_P]$ is minimal 
for the given property, then $E_P'$ can be chosen so that $[E_P':F_P\h] 
= [E_P:F_P]$.  This follows from the fact that at the end of the proof, 
$d_i$ must equal $d$ (i.e., there is just one orbit) by minimality of 
$d$ and because $\xi'$ induces a point of $Z$ over an extension of $F_P$ 
of degree at most $d_i$.

\begin{lemma} \label{finite_exten_la}
Let $L \subseteq L' \subseteq E'$ be separable algebraic field extensions, where $[E':L']$ is finite.  Let $Z$ be an $L$-scheme of finite type such that
$Z(E')$ is non-empty.  Then there 
are finite separable field extensions $L \subseteq \til L \subseteq \til E$ 
such that $\til L \subseteq L'$ and $\til E \subseteq E'$; 
$[\til E:\til L]=[E':L']$; $Z(\til E)$ is non-empty; 
and $E'$ is the compositum of its subfields $\til E$ and $L'$.
\end{lemma}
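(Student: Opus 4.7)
The plan is to reduce to the affine case, pick a primitive element for $E'/L'$, and then descend everything through a sufficiently large finite subextension of $L'/L$. The main technical point is that a polynomial which is irreducible over a larger field remains irreducible over a smaller one, so the degree is preserved.

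First I would reduce to the case where $Z$ is affine. A point of $Z(E')$ corresponds to a morphism $\Spec(E') \to Z$, whose scheme-theoretic image is a single point of $Z$; replacing $Z$ by an affine open neighborhood of that point, I may assume $Z = \Spec(A)$ with $A$ a finitely generated $L$-algebra, say $A = L[x_1,\dots,x_n]/I$. The $E'$-point then corresponds to an $n$-tuple $(z_1,\dots,z_n) \in (E')^n$ satisfying the equations in $I$.

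Next I would choose a primitive element $\alpha \in E'$ for $E'/L'$ (which exists since $E'/L'$ is finite separable), and let $f(x) \in L'[x]$ be its minimal polynomial, which is separable of degree $d = [E':L']$. Let $L_0$ denote the finite separable subextension of $L'/L$ generated over $L$ by the coefficients of $f$. Then $f \in L_0[x]$, and since $f$ is irreducible over $L'$ it is automatically irreducible over any intermediate field, in particular over $L_0$; hence $[L_0(\alpha):L_0] = d$. Now each $z_i$ lies in $E' = L'(\alpha)$ and can therefore be written as a polynomial in $\alpha$ of degree less than $d$ with coefficients in $L'$; let $L_1$ be the finite separable subextension of $L'/L$ generated over $L_0$ by all these coefficients, and set $\til L := L_1$ (a finite separable extension of $L$ contained in $L'$) and $\til E := \til L(\alpha) \subseteq E'$.

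Finally I would verify the four required properties. Since $f \in \til L[x]$ is still irreducible over $\til L$ (by the same ``irreducible over a bigger field'' argument, as $\til L \subseteq L'$), we have $[\til E:\til L] = d = [E':L']$, and $\til E$ is separable over $\til L$ (hence over $L$) because $f$ is separable. By construction $(z_1,\dots,z_n) \in \til E^n$; since these coordinates satisfy the equations defining $Z$, they give a point of $Z(\til E)$. For the compositum, $\til E \cdot L' \supseteq L_0(\alpha) \cdot L' = L'(\alpha) = E'$, while the reverse inclusion is clear. The main obstacle, such as it is, is organizing the choice of $\til L$ to simultaneously (i) contain the coefficients of $f$, so that the degree is preserved, and (ii) be large enough that all the coordinates $z_i$ become defined; once one recognizes that both tasks only require a finite amount of data from $L'$, everything falls into place.
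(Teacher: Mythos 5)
Your proof is correct and follows essentially the same route as the paper's: choose a primitive element of $E'/L'$, adjoin to $L$ the coefficients of its minimal polynomial together with the coefficients expressing the point's coordinates in powers of that element, and use the fact that irreducibility over $L'$ descends to the subfield $\til L \subseteq L'$ to preserve the degree. Splitting the construction into the two stages $L_0 \subseteq L_1$ is only a cosmetic difference from the paper, which adjoins all the needed elements at once.
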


\begin{proof}
Let $\xi \in Z(E')$.  Since $Z$ is an $L$-scheme of finite type, there is an affine open subset of $Z$ that contains $\xi$ and is $L$-isomorphic to a Zariski closed subset $Y$ of $\mbb A^n_L$ for some~$n$.  Let $y_1,\dots,y_n \in E'$ be the coordinates of the image of $\xi$ in $Y$.  Let $z$ be a primitive element of the finite separable field extension $E'/L'$, say with minimal monic polynomial $g$ over $L'$ of degree $d = [E':L']$.  Thus each $y_i$ is of the form 
$\sum_{j=0}^{d-1}c_{ij}z^j$ with $c_{ij} \in L'$.  Let $\til L$ be the subfield of $L'$ generated over $L$ by the coefficients of $g$ and by the elements $c_{ij}$; this is finite over $L$, and it is separable over $L$ since $L'$ is.  The polynomial $g$ is irreducible over $\til L$ because it is irreducible over $L'$.  Thus $\til E := \til L(z) \subseteq E'$ is separable and of degree $d$ over $\til L$, and $\til E \subseteq \til E L' = L'(z) = E'$.  Also, each $y_i$ lies in $\til E$, since $z \in \til E$ and $c_{ij} \in \til L \subseteq \til E$.  So $(y_1,\dots,y_r) \in Y(\til E)$ and thus $\xi \in Z(\til E$). 
\end{proof}

\begin{lem} \label{group_lemma}
Let $\ell$ be a prime number, and let $L \subseteq \til L \subseteq \til E$ be finite separable field extensions such that $[\til E:\til L]$ is prime to $\ell$.  Let $\wh E$ be the Galois closure of $\til E/L$.  Then for every Sylow $\ell$-subgroup $S$ of $\Gal(\wh E/L)$, there is some $\sigma \in \Gal(\wh E/L)$ such that the compositum $\til L \wh E^S \subseteq \wh E$ contains $\sigma(\til E)$.
\end{lem}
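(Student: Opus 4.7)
The plan is to translate the statement into a purely group-theoretic claim via the Galois correspondence, and then deduce it from two standard Sylow-type arguments. Set $G = \Gal(\wh E/L)$, $H = \Gal(\wh E/\til E)$, and $H' = \Gal(\wh E/\til L)$, so that $H \subseteq H' \subseteq G$ and $[H':H] = [\til E:\til L]$ is prime to $\ell$. Under the Galois correspondence, the compositum $\til L \wh E^S$ corresponds to the intersection $H' \cap S$, while $\sigma(\til E) \subseteq \wh E$ is fixed precisely by $\sigma H \sigma^{-1}$. Hence the desired inclusion $\sigma(\til E) \subseteq \til L \wh E^S$ is equivalent to $H' \cap S \subseteq \sigma H \sigma^{-1}$, and the task reduces to producing, for each Sylow $\ell$-subgroup $S$ of $G$, an element $\sigma \in G$ realising this containment.

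First I would use coprimality to push $H$ around inside $H'$. Pick any Sylow $\ell$-subgroup $P$ of $H'$, and consider the action of $P$ on the coset space $H'/H$ by left translation. Every $P$-orbit has size a power of $\ell$, while $|H'/H| = [H':H]$ is coprime to $\ell$; hence at least one orbit is a singleton, giving a coset $aH$ with $a \in H'$ and $a^{-1}Pa \subseteq H$, i.e., $P \subseteq aHa^{-1}$. Second, since $H' \cap S$ is an $\ell$-subgroup of $H'$, Sylow's theorem applied inside $H'$ gives some $h \in H'$ with $h(H' \cap S)h^{-1} \subseteq P$.

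Combining the two, I would set $\sigma := h^{-1}a \in H' \subseteq G$. Then
\[
\sigma H \sigma^{-1} \;=\; h^{-1}\,aHa^{-1}\,h \;\supseteq\; h^{-1}Ph \;\supseteq\; H' \cap S,
\]
which is exactly the required inclusion and hence proves the lemma. I do not anticipate a genuine obstacle: both the fixed-coset argument and Sylow conjugacy in $H'$ are routine. The only step needing care is the Galois dictionary at the outset, to make sure the compositum $\til L \wh E^S$ really does correspond to $H' \cap S$ (and not some other subgroup), and that the direction of inclusion is correctly reversed when passing between subfields of $\wh E$ and subgroups of $G$.
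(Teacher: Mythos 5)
Your proof is correct and takes essentially the same route as the paper's: both reduce, via the Galois correspondence, to conjugating the $\ell$-subgroup $\Gal(\wh E/\til L)\cap S$ into a conjugate of $\Gal(\wh E/\til E)$ by an element of $\Gal(\wh E/\til L)$, using that $[\til E:\til L]$ is prime to $\ell$. The only cosmetic difference is that you place a Sylow $\ell$-subgroup of $\Gal(\wh E/\til L)$ inside a conjugate of $\Gal(\wh E/\til E)$ by orbit-counting on cosets, whereas the paper notes that a Sylow $\ell$-subgroup of $\Gal(\wh E/\til E)$ is already a Sylow $\ell$-subgroup of $\Gal(\wh E/\til L)$ and then invokes Sylow conjugacy.
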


\begin{proof}
The intersection $\Gal(\wh E/\til L)\cap S \subseteq \Gal(\wh E/L)$ is an $\ell$-subgroup of $\Gal(\wh E/\til L)$, and so it is contained in a Sylow $\ell$-subgroup $S^*$ of $\Gal(\wh E/\til L)$.  Let $S'$ be a Sylow $\ell$-subgroup of $\Gal(\wh E/\til E)$.  Since $[\til E:\til L]$ is prime to $\ell$, the group $S'$ is also a 
Sylow $\ell$-subgroup of $\Gal(\wh E/\til L)$.  Thus $S^*,S'$ are conjugate subgroups of $\Gal(\wh E/\til L)$, say by an element $\sigma \in \Gal(\wh E/\til L) \subseteq \Gal(\wh E/L)$.  Since $S' \subseteq \Gal(\wh E/\til E)$, its conjugate $S^* = (S')^\sigma$ is contained in $\Gal(\wh E/\sigma(\til E))$.  Thus $\wh E^{S^*}$ contains $\sigma(\til E)$.  So $\til L \wh E^S = \wh E^{\Gal(\wh E/\til L)} \wh E^S = \wh E^{\Gal(\wh E/\til L) \cap S}$, which contains $\wh E^{S^*}$ and hence contains $\sigma(\til E)$.
\end{proof}

Recall that if $L$ is a field, $Z$ is an $L$-scheme of finite type, and $A/L$ is a finite direct product of field extensions $L_i/L$, an $A$-point on $Z$ is a collection of points in $Z(L_i)$ for each~$i$. In particular, $Z(A)$ is nonempty if and only if $Z(L_i)$ is nonempty for all $i$.

\begin{prop} \label{abstract globalize prop}
Let $\ell$ be a prime number, and let $L \subseteq L' \subseteq E'$ be separable algebraic field extensions, where $[E':L']$ is finite and prime to $\ell$.  Let $Z$ be an $L$-scheme of finite type such that
$Z(E')$ is non-empty. 
Then there is a finite separable field extension $E/L$ of degree prime to $\ell$ such that $Z(E \otimes_L L')$ is non-empty. 

\end{prop}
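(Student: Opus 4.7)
The plan is to combine Lemmas~\ref{finite_exten_la} and~\ref{group_lemma} with Sylow theory for the Galois group of a suitably chosen Galois closure. First, apply Lemma~\ref{finite_exten_la} to the chain $L \subseteq L' \subseteq E'$ to obtain finite separable subextensions $L \subseteq \til L \subseteq \til E$ with $\til L \subseteq L'$, $\til E \subseteq E'$, $[\til E:\til L]=[E':L']$ (hence prime to $\ell$), $Z(\til E) \neq \emptyset$, and $E' = \til E \cdot L'$. Fix an algebraic closure $\bar L$ of $L$ containing $E'$, and let $\wh E \subset \bar L$ denote the Galois closure of $\til E/L$.

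Next, set $G := \Gal(\wh E/L)$, choose any Sylow $\ell$-subgroup $S$ of $G$, and define $E := \wh E^S$. Then $E/L$ is finite separable of degree $[G:S]$, which is prime to $\ell$. To finish, I need to show $Z(E \otimes_L L') \neq \emptyset$. Since $E/L$ is finite separable, $E \otimes_L L'$ is an \'etale $L'$-algebra, and its decomposition as a product of fields corresponds to the $\Gal(\bar L/L')$-orbits on the set of $L$-embeddings $E \hookrightarrow \bar L$; the field factor indexed by the orbit of $\sigma$ is the compositum $\sigma(E) \cdot L'$ inside $\bar L$. Since $\wh E/L$ is Galois, every $L$-embedding $E \hookrightarrow \bar L$ is the restriction of some element of $G$, so by the remark immediately preceding the proposition, it suffices to establish $Z(\sigma(E) \cdot L') \neq \emptyset$ for every $\sigma \in G$.

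Fix such a $\sigma$. Then $\sigma(E) = \sigma(\wh E^S) = \wh E^{\sigma S \sigma^{-1}}$, and $\sigma S \sigma^{-1}$ is again a Sylow $\ell$-subgroup of $G$. Lemma~\ref{group_lemma} applied to this Sylow subgroup supplies some $\tau \in G$ such that $\til L \cdot \sigma(E) \supseteq \tau(\til E)$. Since $Z$ is defined over $L$ and $\tau$ fixes $L$, transporting a $\til E$-point of $Z$ through $\tau$ (the defining polynomials have $L$-coefficients) yields $Z(\tau(\til E)) \neq \emptyset$, whence $Z(\til L \cdot \sigma(E)) \neq \emptyset$. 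Because $\til L \subseteq L'$, this gives $Z(\sigma(E) \cdot L') \neq \emptyset$, as required.

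The main obstacle is the need to verify nonemptyness \emph{simultaneously} for every factor of the \'etale algebra $E \otimes_L L'$, rather than just one. This is what forces the use of conjugacy of Sylow subgroups: as $\sigma$ ranges over $G$, the conjugates $\sigma S \sigma^{-1}$ exhaust all Sylow $\ell$-subgroups of $G$, so the single uniform choice $E = \wh E^S$ permits a simultaneous application of Lemma~\ref{group_lemma} at every factor. Once this observation is in hand, the remainder is routine bookkeeping of subfields inside $\bar L$.
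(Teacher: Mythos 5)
Your proposal is correct and follows essentially the same route as the paper: Lemma~\ref{finite_exten_la} to produce $\til L \subseteq \til E$, then $E = \wh E^S$ for a Sylow $\ell$-subgroup of $\Gal(\wh E/L)$, with Sylow conjugacy and Lemma~\ref{group_lemma} showing each field factor of the \'etale algebra contains a conjugate of $\til E$. The only cosmetic difference is that the paper first reduces to $Z(E\otimes_L\til L)$ and decomposes that algebra inside $\wh E$, whereas you decompose $E\otimes_L L'$ directly and use $\til L\subseteq L'$ at the end; the two are interchangeable.
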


\begin{proof} 
Let $\til L$ and $\til E$ be as in Lemma~\ref{finite_exten_la}; in particular, $[\til E:\til L]=[E':L']$ is prime to $\ell$.  Let $\wh E$ be the Galois closure of $\til E$ over $L$; let $S$ be a Sylow $\ell$-subgroup of $\Gal(\wh E/L)$; and let $E$ be the fixed field $\wh E^S$.  We will show that $E$ has the desired properties.

Since $S$ is a Sylow $\ell$-subgroup of $\Gal(\wh E/L)$, the degree of $E$ over $L$ is prime to $\ell$.  
In order to show that $Z(E \otimes_L L')$ is non-empty, it is sufficient to show that $Z(E \otimes_L \til L)$ is non-empty, because $L \subseteq \til L \subseteq L'$.  
Since $E/L$ is a separable field extension of finite degree, 
$E \otimes_L \til L$ is a finite separable algebra over $\til L$, and hence is a finite direct product $\prod \til E_i$ of finite separable field extensions of $\til L$.  It therefore suffices to show
that $Z(\til E_i)$ is non-empty for all~$i$.  Here each $\til E_i$ is isomorphic to a compositum of $\til L$ and $E$ with respect to some $L$-embeddings of those two fields into the common overfield $\wh E$ (since $\wh E/L$ is Galois).  

After conjugating, we may assume that the above $L$-embedding of $\til L$ into $\wh E$ is the given one (i.e., the original composition $\til L \hookrightarrow \til E \hookrightarrow \wh E$), while allowing the $L$-embedding of $E$ into $\wh E$ to vary.  The images of $E$ in $\wh E$ under the various $L$-algebra embeddings are just the Galois conjugates $\sigma(E)$ for $\sigma \in \Gal(\wh E/L)$.  Since $E$ is the fixed field $\wh E^S$, its Galois conjugates are the fixed fields of the conjugates of $S$, viz.\ the fields $\wh E^{S'}$ where $S'$ varies over the Sylow $\ell$-subgroups of $\Gal(\wh E/L)$.  So it suffices to show that for each $S'$, there is a point of $Z$ defined over the compositum $\til L \wh E^{S'} \subseteq \wh E$.

So consider any $S'$.  By Lemma~\ref{group_lemma} (which applies since $[\til E:\til L]$ is prime to $\ell$), $\til L \wh E^{S'}$ contains $\tau(\til E)$ for some $\tau \in \Gal(\wh E/L)$.  But $Z$ is an $L$-variety that has an $\til E$-point $\xi$, by Lemma~\ref{finite_exten_la}.  Hence $Z$
also has a $\tau(\til E)$-point, viz.\ $\tau(\xi)$.  
But $Z(\tau(\til E)) \subseteq Z(\til L \wh E^{S'})$, since $\tau(\til E) \subseteq \til L \wh E^{S'}$.
So indeed $Z$ has a point defined over $\til L \wh E^{S'}$.
\end{proof}

\begin{prop} \label{globalize_FP_extension}
Let $T$ be an excellent henselian discrete valuation ring with fraction field $K$.  Let $F$ be a one-variable function field over $K$ (e.g., a semi-global field, if $K$ is complete), and let $\XX$ be a normal model of $F$.  Let $P$ be a (not necessarily closed) point of the closed fiber of $\XX$, and let $Z$ be an $F$-scheme of finite type.
Let $\ell$ be a prime number, and suppose that there is a finite separable field extension $E_P/F_P$ of degree prime to $\ell$ such that $Z(E_P)$ is non-empty.  Then there is a finite separable field extension $E/F$ of degree prime to $\ell$ such that $Z(E \otimes_F F_P)$ is non-empty.  
\end{prop}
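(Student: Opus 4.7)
The plan is to chain Proposition~\ref{henselization} and Proposition~\ref{abstract globalize prop}, using the henselization $F_P\h$ as the crucial intermediate field. The key observation is that while $F_P$ is far from algebraic over $F$ (so Proposition~\ref{abstract globalize prop} cannot be applied directly with $L' = F_P$), the henselization $F_P\h$ \emph{is} separable algebraic over $F$: the ring $R_P\h$ is a filtered colimit of \'etale local $R_P$-algebras, each of which has fraction field a finite separable field extension of $\Frac(R_P) = F$, so $F_P\h$ is a directed union of finite separable field extensions of $F$.

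First, apply Proposition~\ref{henselization} to obtain a finite separable field extension $E_P'/F_P\h$ of degree prime to~$\ell$ such that $Z(E_P')$ is non-empty. Then $F \subseteq F_P\h \subseteq E_P'$ is a tower of separable algebraic field extensions with $[E_P':F_P\h]$ finite and prime to~$\ell$. Applying Proposition~\ref{abstract globalize prop} with $L = F$, $L' = F_P\h$, and $E' = E_P'$, one obtains a finite separable field extension $E/F$ of degree prime to~$\ell$ such that $Z(E \otimes_F F_P\h)$ is non-empty.

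To conclude, observe that the inclusion $R_P\h \hookrightarrow \wh R_P$ induces an inclusion $F_P\h \hookrightarrow F_P$, and hence an $F$-algebra homomorphism $E \otimes_F F_P\h \to E \otimes_F F_P$ of finite \'etale algebras. Decomposing each side as a product of fields, any point of $Z$ over a given field factor of $E \otimes_F F_P\h$ yields, by base change, a point of $Z$ over each field factor of $E \otimes_F F_P$ lying above it; hence $Z(E \otimes_F F_P)$ is non-empty as desired.

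The proof is essentially a bookkeeping step combining the two cited propositions, so there is no single ``hard part'': all of the substantial work (Artin approximation and the Sylow-theoretic descent argument) has already been done. The only point requiring care is verifying that $F_P\h/F$ is separable algebraic, since this is precisely what allows the henselization to bridge the approximation step of Proposition~\ref{henselization} with the algebraic descent of Proposition~\ref{abstract globalize prop}.
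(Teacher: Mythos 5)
Your proof is correct and follows essentially the same route as the paper: apply Proposition~\ref{henselization} to pass to an extension $E_P'/F_P\h$, then Proposition~\ref{abstract globalize prop} with $L=F$, $L'=F_P\h$, $E'=E_P'$, and finally use $F_P\h \subseteq F_P$ to deduce $Z(E \otimes_F F_P)\neq\varnothing$. Your explicit remarks that $F_P\h/F$ is separable algebraic and that points propagate to the factor fields of $E \otimes_F F_P$ are just spelled-out versions of steps the paper treats as immediate.
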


\begin{proof}
By Proposition~\ref{henselization}, there is a finite separable field extension $E_P'/F_P\h$ of degree prime to $\ell$ such that $Z(E_P')$ is non-empty.  Applying Proposition~\ref{abstract globalize prop} with $L=F$, $L'=F_P\h$, and $E'=E_P'$, we obtain a finite separable field extension $E/F$ of degree prime to $\ell$ such that $Z(E \otimes_F F_P\h)$ is non-empty.  But $F_P\h$ is contained in $F_P$, and so $Z(E \otimes_F F_P)$ is non-empty.
\end{proof}

%-------------------------------------------------------------------------------------------------------------------
\subsection{Local descent results}\label{local descent subsec}
%-------------------------------------------------------------------------------------------------------------------

In this subsection, we establish local descent results which will be used to prove local-global principles with respect to discrete valuations in Subsection~\ref{lgp dvr subsec}.

Let $A$ be a complete regular local ring of dimension 2 with field of fractions $F$ and residue field $k$. For any prime $\pi$ of $A$, let $F_\pi$ denote the completion of $F$ with respect to the discrete valuation associated to $\pi$, and let $k(\pi)$ denote the residue field.

The following lemma  is proved in (\cite[Lemma 5.1]{PPS}) for Galois extensions, and a similar proof gives the general case. 
  
\begin{lemma} \label{unramified}
Let $A$ be a complete regular local ring of dimension two, $F$ its field 
of fractions and
$k$ its residue field.  Let $\pi,\delta \in A$ generate the maximal 
ideal and let $E_\pi /F_\pi$ be a finite separable unramified field 
extension.
If $\cha(k)$ does not divide $[E_\pi : F_\pi]$,
then there exists a finite separable field extension $E/F$ such that $E 
\otimes_F F_\pi \simeq E_\pi$ and
the integral closure of $A$ in $E$ is a complete regular local ring with fraction field  $E$ and maximal ideal $(\pi', \delta')$, where $\pi'$ and $\delta'$ generate the unique primes lying over $\pi$ and $\delta$, respectively.
\end{lemma}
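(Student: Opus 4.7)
The strategy, following the Galois case of \cite[Lemma 5.1]{PPS}, is to construct the desired integral closure $B$ directly as a tower $A \subseteq A_1 \subseteq B$, where $A_1/A$ is an \'etale lift of a certain residue field extension and $B/A_1$ is a totally ramified Eisenstein extension with respect to $\delta$. Let $\bar E$ denote the residue field of $E_\pi$, a separable extension of $k(\pi) := \Frac(A/\pi)$ of degree $n := [E_\pi:F_\pi]$, and let $\bar B_1 \subset \bar E$ be the integral closure of $A/\pi$ in $\bar E$. Then $\bar B_1$ is a complete DVR whose residue field $\bar k_1$ has degree $f$ over $k$ and whose ramification index over $A/\pi$ is $e$, with $ef = n$. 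The hypothesis $\cha(k) \nmid n$ forces both $e$ and $f$ prime to $\cha(k)$, so $\bar k_1/k$ is separable and the ramification is tame.

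The first step is to build $A_1$: take a primitive element of $\bar k_1/k$ with minimal polynomial $\bar h \in k[x]$ of degree $f$, lift $\bar h$ to a monic $h \in A[x]$, and set $A_1 := A[x]/(h)$. Hensel's lemma makes $h$ irreducible, and its discriminant is a unit in $A$, so $A_1$ is an \'etale $A$-algebra, hence a complete two-dimensional regular local ring with residue field $\bar k_1$ and maximal ideal $(\pi, \delta)$.

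Next, since $\bar B_1/(A_1/\pi A_1)$ is a totally ramified extension of complete DVRs of degree $e$, classical Eisenstein theory gives $\bar B_1 = (A_1/\pi A_1)[y]/(\tilde f)$ for a monic polynomial $\tilde f$ of degree $e$ that is Eisenstein with respect to $\bar\delta$ (the image of $\delta$ in $A_1/\pi A_1$). I would lift $\tilde f$ to a monic $f \in A_1[y]$ whose non-leading coefficients lie in $\delta A_1$, with constant term $-\delta u$ for a unit $u \in A_1$. Then $f$ is Eisenstein with respect to the height-one prime $\delta A_1$, hence irreducible, so $B := A_1[y]/(f)$ is a domain finite over $A_1$. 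Computing $B/(\pi,\delta)B = \bar k_1[y]/(y^e)$, which is local with maximal ideal $(y)$, one sees that $B$ is a complete local ring with maximal ideal $(\pi, y)$ and therefore a two-dimensional complete regular local ring. The identifications $B/\pi B = \bar B_1$ (a DVR) and $B/\delta B = (A_1/\delta A_1)[y]/(y^e)$ (with unique minimal prime $(y)$) produce the unique height-one primes of $B$ above $\pi A$ and $\delta A$, generated by $\pi' := \pi$ and $\delta' := y$ respectively. Setting $E := \Frac(B)$, the tameness $\cha(k) \nmid e$ makes $f$ separable, so $E/F$ is separable; and because $\pi B$ is the unique prime of $B$ above $\pi A$, $E \otimes_F F_\pi$ equals the fraction field of the completion of $B$ at $\pi B$, which is an unramified extension of $F_\pi$ with residue field $\Frac(\bar B_1) = \bar E$, hence isomorphic to $E_\pi$ by uniqueness of unramified extensions.

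The main obstacle lies in the second step: producing the Eisenstein presentation $\tilde f$ for $\bar B_1/(A_1/\pi A_1)$ and choosing the lift $f$ compatibly so that $B/\pi B$ recovers $\bar B_1$ exactly while $B/\delta B$ has the nilpotent structure needed for a unique prime above $\delta$. The classical theory of complete DVR extensions handles the first, while the explicit coefficient-by-coefficient lift into $\delta A_1$ handles the second and secures the Eisenstein property with respect to $\delta A_1$; tameness ($\cha(k) \nmid e$) is crucial both implicitly in the Eisenstein structure and explicitly for the separability of $E/F$.
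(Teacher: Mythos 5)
Your argument is correct, and it shares the paper's overall architecture: both proofs first split the residue extension $\bar E/k(\pi)$ of $E_\pi/F_\pi$ into its unramified part (handled by lifting the residue--residue extension $\bar k_1/k$, which is separable since $[\bar k_1:k]$ divides $n$ and is prime to $\cha(k)$, to a monogenic \'etale extension $A_1$ of $A$, exactly as in the paper) and its totally ramified part along $\bar\delta$. Where you genuinely diverge is in the second step. The paper invokes the structure theorem for \emph{tamely} totally ramified extensions of complete discretely valued fields (\cite[Prop.~II.5.12]{Lang}) to write $\bar E = L(\pi)(\sqrt[d]{v\bar\delta})$, lifts the unit $v$ to $u\in A_1$, and sets $E=L(\sqrt[d]{u\delta})$, so that the integral closure is visibly $A_1[\sqrt[d]{u\delta}]$ with maximal ideal $(\pi,\sqrt[d]{u\delta})$. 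You instead use the general Eisenstein presentation of a totally ramified extension of complete DVRs and lift the Eisenstein polynomial coefficientwise into $\delta A_1$, then verify everything by hand: irreducibility via Eisenstein at the height-one prime $\delta A_1$ (plus normality of $A_1$ to pass to the fraction field), locality and regularity of $B=A_1[y]/(f)$ from $B/(\pi,\delta)B=\bar k_1[y]/(y^e)$ together with $\delta\in yB$, the identifications $B/\pi B\cong\bar B_1$ and $B/\delta B\cong (A_1/\delta A_1)[y]/(y^e)$ giving the unique primes over $\pi$ and $\delta$, and finally $E\otimes_F F_\pi\cong E_\pi$ by uniqueness of unramified extensions. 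This buys you independence from the Kummer-type structure theorem (tameness enters only through separability of the residue lift and of $f$), at the cost of the explicit ring-theoretic verifications, which the radical generator makes nearly immediate in the paper's version; in particular the paper's $y^d-u\delta$ is just the special case of your Eisenstein lift. Two small points you leave implicit but should note: the embedding $A_1/\pi A_1\hookrightarrow \bar B_1$ inducing the identity on $\bar k_1$ (so that $\bar E/\Frac(A_1/\pi A_1)$ is indeed totally ramified) comes from Hensel's lemma applied to $\bar h$ in the complete DVR $\bar B_1$, and $B$ equals the integral closure of $A$ in $E$ because it is finite over $A$, has fraction field $E$, and is normal, being regular.
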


\begin{proof}  The residue field $F(\pi)$ of $F_\pi$  is  the field of 
fractions of $A/(\pi)$.
Since $A$ is a complete regular local ring, $A/(\pi)$ is a complete 
discrete valuation ring,
$k$ is the residue field of $A/(\pi)$, and the image $\bar{\delta}$ 
of $\delta$ is a uniformizer in $A/(\pi)$.

Let $E(\pi)$ be the residue field of $E_\pi$; this is a complete discretely valued field.
Let $L(\pi)$ be the maximal unramified field extension of $F(\pi)$ contained 
in $E(\pi)$, and let $L_\pi$ be the subextension of $E_\pi/F_\pi$ whose 
residue field is $L(\pi)$.
Let $\kappa$ be the residue field of $E(\pi)$ (or equivalently, of 
$L(\pi)$) at its discrete valuation.
Since $[E_\pi : F_\pi]$ is coprime to $\cha(k)$, so is $[\kappa:k]$, and 
thus $\kappa$ is a finite separable field extension of $k$.
Write $\kappa = k[t]/(f(t))$ for some monic separable polynomial $f(t) 
\in k[t]$. By lifting the polynomial $f(t) $ to a
monic polynomial over $A$, we obtain a finite \'etale $A$-algebra $B$; 
this is
a complete regular local ring with maximal ideal $(\pi, \delta)$ at 
which the
residue field is $\kappa$.
The residue field of $L$ (i.e., of $B$) at $\pi$ is $L(\pi)$. Since the 
same is true for $L_\pi$, it follows that the complete discretely valued 
fields $L \otimes_F F_\pi$ and $L_\pi$ are isomorphic over $F_\pi$. 
Note that the image $\bar{\delta} \in B/(\pi) \subset L(\pi)$ of 
$\delta$ is a uniformizer for $L(\pi)$.

Now $E(\pi)/L(\pi)$ is totally ramified of degree $d$ prime to 
char$(k)$.  So by
\cite[Proposition II.5.12]{Lang},
$E(\pi) = L(\pi)(\sqrt[d]{v\bar{\delta}})$ for some unit $v \in 
B/(\pi)$, the valuation ring of $L(\pi)$.
Let $u \in B$ be a lift of $v \in B/(\pi)$ and let $E= 
L(\sqrt[d]{u\delta})$.
Then $E$ is a finite separable field extension of $F$, and $E \otimes_F F_\pi 
\simeq E \otimes_L L \otimes_F F_\pi \simeq E \otimes_L L_\pi
\simeq L_\pi(\sqrt[d]{u\delta}) \simeq
E_\pi$.
The integral closure of $B$ in $E$ (or equivalently, of $A$ in $E$) is a 
two-dimensional complete local ring with maximal ideal
$(\pi,\sqrt[d]{u\delta})$, and hence it is regular. Its fraction field is $E$, and the ideals in this ring that are 
generated by $\pi$ and $\sqrt[d]{u\delta}$ are the unique prime ideals lying 
over the ideals $(\pi)$ and $(\delta)$ in $A$.  So $E$ is as asserted.
\end{proof}

  The above lemma can be used to obtain a descent statement for not necessarily unramified extensions.
   \begin{lemma} 
   \label{pi} Let $A$ be a complete regular local ring of dimension 2, $F$ its field of fractions and
 $k$ its residue field.  Let $\pi,\delta \in A$ generate the maximal ideal and let $E_\pi /F_\pi$ be a finite separable field extension.
Suppose $\cha(k)$ does not divide $[E_\pi : F_\pi]$. Let $\ell$ be a prime number.
If $[E_\pi : F_\pi]$ is prime to $\ell$,  then there exists a finite separable field extension $E/F$ such that  
\begin{itemize}
\item $[E : F]$ is prime to $\ell$;
\item $E \otimes_F F_\pi $ is a field; 
\item $E_\pi$ is isomorphic to a subfield of $E \otimes_F F_\pi $;
\item the integral closure of $A$ in $E$ is a complete regular local ring with fraction field  $E$ and maximal ideal $(\pi', \delta')$, where $\pi'$ and $\delta'$ generate the unique primes 
lying over $\pi$ and $\delta$, respectively.
\end{itemize}  \end{lemma}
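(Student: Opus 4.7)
The plan is to reduce to the unramified descent of Lemma~\ref{unramified} by peeling off the maximal unramified subextension of $E_\pi/F_\pi$ and then handling the totally tamely ramified remainder by an explicit Kummer-type construction.

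Let $L_\pi/F_\pi$ be the maximal unramified subextension of $E_\pi/F_\pi$, and set $f := [L_\pi:F_\pi]$ and $e := [E_\pi:L_\pi]$, so $ef = [E_\pi:F_\pi]$ is prime to both $\cha(k)$ and $\ell$. Apply Lemma~\ref{unramified} to $L_\pi/F_\pi$ to obtain a finite separable extension $L/F$ of degree $f$ with $L \otimes_F F_\pi \cong L_\pi$, whose integral closure $B$ of $A$ in $L$ is a complete regular two-dimensional local ring with maximal ideal $(\pi, \delta'')$; here $(\pi)$ and $(\delta'')$ are the unique primes of $B$ lying over $(\pi)$ and $(\delta) \subset A$.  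By \cite[Proposition~II.5.12]{Lang}, write $E_\pi = L_\pi(\sqrt[e]{w\pi})$ for some unit $w \in \mc O_{L_\pi}$. Since $e$ is invertible in the residue field, Hensel gives $1 + \pi \mc O_{L_\pi} \subseteq (\mc O_{L_\pi}^\times)^e$, so $E_\pi$ depends on $w$ only through its residue $\bar w \in E(\pi)^\times$, where $E(\pi) := \Frac(B/(\pi))$. Decompose $\bar w = \bar w_0 \bar\delta''^r$ with $\bar w_0 \in (B/(\pi))^\times$ and $0 \le r < e$, and lift $\bar w_0$ to a unit $\tilde w_0 \in B^\times$.

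Define $E := L(\sqrt[e]{\delta''}, \sqrt[e]{\tilde w_0 \pi})$. The polynomials $X^e - \delta''$ and $X^e - \tilde w_0 \pi$ are Eisenstein in $B$ at the primes $(\delta'')$ and $(\pi)$ respectively, hence irreducible over $L$; using that $\bar\delta''$ is a uniformizer of the DVR $B/(\pi)$, a valuation argument shows $E \otimes_F F_\pi = L_\pi(\sqrt[e]{\delta''}, \sqrt[e]{\tilde w_0 \pi})$ is a field of degree $e^2$ over $L_\pi$, so $[E:F] = e^2 f$ is prime to $\ell$. Setting $\pi' := \sqrt[e]{\tilde w_0 \pi}$ and $\delta' := \sqrt[e]{\delta''}$, the product $(\delta')^r \pi'$ is an $e$-th root of $\tilde w_0 \delta''^r \pi$, whose class in $L_\pi^\times/(L_\pi^\times)^e$ equals that of $w\pi$; hence $E_\pi \subseteq E \otimes_F F_\pi$. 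The integral closure $B[\pi', \delta']$ has maximal ideal $(\pi', \delta')$ and so is a complete regular two-dimensional local ring in which $(\pi')$ and $(\delta')$ are by construction the unique primes over $(\pi)$ and $(\delta)$.

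The main obstacle is ensuring the integral closure is \emph{regular}: the naive choice $E = L(\sqrt[e]{\tilde w_0 \delta''^r \pi})$ gives, when $r > 0$, a ring that is normal but singular, with maximal ideal requiring three generators. The split construction above circumvents this at the cost of enlarging $[E:F]$ from $ef$ to $e^2 f$ (still prime to $\ell$); its success hinges on the linear independence of the classes of $\delta''$ and $\pi$ in $L_\pi^\times/(L_\pi^\times)^e$, which follows from $\bar\delta''$ being a uniformizer of the residue field $E(\pi)$.
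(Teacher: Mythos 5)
Your proposal is correct and follows essentially the same route as the paper's proof: reduce to the totally (tamely) ramified case via Lemma~\ref{unramified}, normalize the unit $w$ using the residue decomposition $\bar w=\bar w_0\bar\delta''^{\,r}$ and Hensel's lemma, and then adjoin separate $e$-th roots of the two regular parameters, $E=L(\sqrt[e]{\delta''},\sqrt[e]{\tilde w_0\pi})$, exactly as the paper takes $E=F(\sqrt[n]{\delta},\sqrt[n]{v\pi})$ after replacing $F$ by $L$. The only differences are cosmetic (you verify regularity of the integral closure directly where the paper cites \cite[Lemma 3.2]{PS}, and your restriction $0\le r<e$ is inessential).
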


 \begin{proof} Let $L_\pi/F_\pi$ be the maximal unramified extension contained in $E_\pi$.
 By Lemma~\ref{unramified}, there is a finite separable extension $L/E$ 
such that $L\otimes F_\pi \simeq L_\pi$; the integral closure of $A$ in 
$L$ is regular with maximal ideal $(\pi', \delta')$; and such that 
$(\pi')$ and $(\delta')$ are the unique primes lying over the primes 
$(\pi)$ and $(\delta)$ of $A$.
Thus, replacing $F_\pi$ by $L_\pi$ and $F$ by $L$, we may assume that 
 $E_\pi/F_\pi$ is totally ramified. Since $\cha(k)$ does not divide $n := [E_\pi : F_\pi]$, neither does 
the characteristic of the residue field $k(\pi)$ of $F_\pi$.  Hence 
$E_\pi = F_\pi(\sqrt[n]{u\pi})$ for some $u \in F_\pi$ which is a unit 
at the discrete valuation of $F_\pi$ (\cite[Section II.5, Proposition 
12]{Lang}). Let $\bar u$ be the image of $u$ in $k(\pi)$. Since $k(\pi)$ is the field of 
 fractions of $A/(\pi)$,  we have $\bar u =\bar{ v}\bar{\delta}^i$ for some $i$ and some unit $\bar{v} \in A/(\pi)$ with preimage $v \in A$. 
 Thus $u^{-1}v\delta^i$ lies in the valuation ring of $F_\pi$ and is 
congruent to $1$ mod $\pi$.  By Hensel's Lemma, this element has an 
$n$-th root in $F_\pi$, and so $E_\pi = F_\pi(\sqrt[n]{v\delta^i \pi})$.
 Let $E = F(\sqrt[n]{\delta}, \sqrt[n]{v\pi})$. Since $n$ is not divisible by either $\cha(k)$ or $\ell$,
the field extension $E/F$ is separable and of degree prime to $\ell$. 
Moreover, $E \otimes_F F_\pi$ is isomorphic to the field 
$F_\pi(\sqrt[n]{\delta}, \sqrt[n]{v\pi})$, which contains $E_\pi$ as a 
subfield. Since $E/F$ is finite 
and since $A$ is a complete local ring with fraction field $F$, the integral 
closure $B$ of $A$ in $E$ is a complete local ring with fraction field $E$. By (\cite[Lemma 3.2]{PS}), $B$ is a regular local ring with maximal ideal $( \sqrt[n]{\delta}, \sqrt[n]{v\pi})$. Since $(\sqrt[n]{\delta})$ and $(\sqrt[n]{v\pi})$ are the unique primes 
of $B$ lying over the primes $(\delta)$ and $(\pi)$ of $A$, the result 
follows.
 \end{proof}
 
Finally, we require a simultaneous descent result:
 
\begin{lemma} 
\label{pi-delta}
Let $A$ be a complete regular local ring of dimension~$2$, $F$ its field of fractions, $(\pi, \delta)$ its 
maximal ideal,  and  $k$ its residue field.  Suppose that $\cha(k) =0$.
Let $E_\pi/F_\pi$ and $E_\delta/F_\delta$ be finite field extensions.
Let $\ell$ be a prime. Suppose that  the degrees of $E_\pi/F_\pi$ and $E_\delta/F_\delta$ are prime to  
$\ell$. Then there exists a finite (separable) field extension $E/F$ such that 
\begin{itemize}
\item $[E : F]$ is prime to $\ell$; 
\item  the integral closure of $A$ in $E$ is a complete regular local ring;
\item $E\otimes_F F_\pi$ and  $E\otimes_F F_\delta$ are fields;
\item $E_\pi$ is isomorphic to a subfield of $E\otimes_F F_\pi$;  
\item $E_\delta$ is isomorphic to a subfield of $E\otimes_F F_\delta$.
\end{itemize}
 \end{lemma}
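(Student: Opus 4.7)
The plan is to apply Lemma~\ref{pi} twice in sequence: first to descend the extension at $\pi$, and then, over the resulting extension, to descend the extension at $\delta$. The key feature enabling this iteration is that Lemma~\ref{pi} outputs a field whose integral closure is again a two-dimensional complete regular local ring with controlled maximal ideal, so the hypotheses of the lemma are preserved. Since $\cha(k)=0$ we also have $\cha(F)=0$, so all characteristic conditions in Lemma~\ref{pi} are automatic and every finite extension in sight is separable.

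For the first step I would invoke Lemma~\ref{pi} on $A$, $F$, the generators $(\pi,\delta)$ of the maximal ideal, and $E_\pi/F_\pi$ with the given $\ell$. This produces a finite separable extension $L/F$ of degree prime to $\ell$ such that $L\otimes_F F_\pi$ is a field containing $E_\pi$, and such that the integral closure $B$ of $A$ in $L$ is a complete regular local ring of dimension $2$ with maximal ideal $(\pi',\delta')$, where $(\pi')$ and $(\delta')$ are the unique primes of $B$ lying over $(\pi)$ and $(\delta)$.

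For the second step, since $(\delta')$ is the unique prime of $B$ over $(\delta)$, the ring $L\otimes_F F_\delta$ equals the field $L_{\delta'}$, of degree dividing $[L:F]$ (hence prime to $\ell$) over $F_\delta$. The base change $E_\delta\otimes_{F_\delta}L_{\delta'}$ is then a finite \'etale $L_{\delta'}$-algebra of dimension $[E_\delta:F_\delta]$, which splits as a product of finite field extensions $\prod_i \widetilde{E}_{\delta,i}$ of $L_{\delta'}$. Because the factor degrees sum to an integer prime to $\ell$, at least one factor $\widetilde{E}_\delta$ has degree prime to $\ell$ over $L_{\delta'}$, and the $F_\delta$-algebra map $E_\delta\to\widetilde{E}_\delta$ (composition of $e\mapsto e\otimes 1$ with projection) is a field embedding. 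I then apply Lemma~\ref{pi} to $B$, $L$, the generators $(\delta',\pi')$ of the maximal ideal of $B$, and $\widetilde{E}_\delta/L_{\delta'}$, obtaining a finite separable $E/L$ of degree prime to $\ell$ whose integral closure $C$ over $B$, equivalently over $A$, is a complete regular local ring with maximal ideal $(\delta'',\pi'')$, where $(\delta'')$ and $(\pi'')$ are the unique primes over $(\delta')$ and $(\pi')$.

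To finish I would verify that $[E:F]=[E:L][L:F]$ is prime to $\ell$; that by uniqueness of the primes along the towers $(\pi)\leftarrow(\pi')\leftarrow(\pi'')$ and $(\delta)\leftarrow(\delta')\leftarrow(\delta'')$, the tensor products $E\otimes_F F_\pi=E_{\pi''}$ and $E\otimes_F F_\delta=E_{\delta''}$ are fields; and that these contain $E_\pi$ (via $E_\pi\subseteq L_{\pi'}\subseteq E_{\pi''}$) and $E_\delta$ (via $E_\delta\hookrightarrow\widetilde{E}_\delta\subseteq E_{\delta''}$) as subfields. The main obstacle I anticipate is precisely the choice of $\widetilde{E}_\delta$: after the first descent, the base change of $E_\delta$ along $L/F$ is in general only an \'etale algebra rather than a field, so Lemma~\ref{pi} cannot be applied to it directly; one must first pass to a suitable factor, and the existence of one with prime-to-$\ell$ degree depends essentially on the fact that the factor degrees sum to $[E_\delta:F_\delta]$, which is prime to $\ell$.
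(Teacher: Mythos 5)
Your proposal is correct and follows essentially the same route as the paper's proof: apply Lemma~\ref{pi} to $E_\pi/F_\pi$ over $A$, use the uniqueness of the prime over $(\delta)$ to identify $L\otimes_F F_\delta$ with $L_{\delta'}$, pick a prime-to-$\ell$ factor of $E_\delta\otimes_{F_\delta}L_{\delta'}$, and apply Lemma~\ref{pi} a second time over the new complete regular local ring, tracking uniqueness of primes to get the field and containment statements. The one subtlety you flag (passing to a factor of the \'etale algebra at $\delta$ before the second application) is handled identically in the paper.
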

 
 \begin{proof} By Lemma~\ref{pi}, there exists a finite (separable) field extension $\tilde{E}/F$ such that 
 $[\tilde{E} : F]$ is prime to $\ell$, $\tilde{E} \otimes_F F_\pi$ is a field which contains an isomorphic copy of $E_\pi$ as a subfield, and
  the integral closure $\tilde{B}$ of $A$ in $\tilde{E}$ is a regular local ring with maximal ideal $(\pi', \delta')$, with
  $\pi'$ and $\delta'$ lying over $\pi$ and $\delta$, respectively. Moreover, the ideals $(\pi')$ and $(\delta')$ are uniquely determined by $\pi$ and $\delta$.
  Since $\delta'$ is a prime lying over $\delta$, $F_\delta \subseteq \tilde{E}_{\delta'}$. 
  Since $E_\delta/F_\delta$ is a separable field extension,
  $E_\delta \otimes_{F_\delta} \tilde{E}_{\delta'}$ is a product of field extensions $E_i/\tilde{E}_{\delta'}$. 
  Since  $[E_\delta : F_\delta]$ prime to $\ell$, $[E_i : \tilde{E}_{\delta'}]$ is prime to~$\ell$ for some~$i$.
 
  Again by Lemma~\ref{pi} (this time applied to the complete regular local ring $\til B$ 
and $E_i/\til E_{\delta'}$), there exists a finite field extension 
$E/\til{E}$ of degree prime to $\ell$ with the following properties:
$E \otimes_{\til{E}} \til{E}_{\delta'}$ is a field containing $E_i$ as a 
subfield; the integral closure $B$ of $\til{B}$ in $E$ is a complete 
regular local ring having fraction field $E$ and maximal ideal of the form
$(\pi'',\delta'')$; and $(\pi'')$ and $(\delta'')$ are the unique primes 
of $B$ that lie over the primes  $(\pi')$ and $(\delta')$ of $\til B$.
 Note that the uniqueness of $(\delta')$ implies that $\tilde{E}_{\delta'}=\tilde{E}\otimes_F F_\delta$.  Since $E_\delta$ is isomorphic to a subfield of $E_i$, $E_\delta$ is isomorphic to a subfield of $E \otimes_{\tilde{E}} \tilde{E}_{\delta'}=E\otimes_{\tilde E} (\tilde{E} \otimes_F F_\delta)=E\otimes_F F_\delta$ as claimed. Similarly, $E_\pi$ is a subfield of $\tilde{E}\otimes_F F_\pi$ which is a subfield of $E\otimes_FF_\pi$ by base change. The uniqueness of $(\pi'')$ implies that the latter is a field.
  Since $[E : \tilde{E}]$ and $[\tilde{E} : F]$ are prime to $\ell$, 
  $[E : F]$ is prime to $\ell$.  Since the integral closure of $A$ in  $E$ is $B$, the assertion follows.
 \end{proof}

%--------------------------------------------------------------------------------------------------------------------------------
\section{Local-global principles for zero-cycles}\label{lgp sec}
%--------------------------------------------------------------------------------------------------------------------------------

Let $F$ be a field, and let $Z$ be an $F$-scheme. 
A {\em zero-cycle} on $Z$ is a finite ${\mathbb Z}$-linear combination $\sum n_iP_i$ of closed points $P_i$ of $Z$. Its {\em degree} is $\sum n_i\operatorname{deg}(P_i)$, where $\operatorname{deg}(P_i)$ is the degree of the residue field of $P_i$ over $F$. We say that a closed point $P$ of $Z$ is  {\em separable} if its residue field is separable over $F$.  A zero-cycle $\sum n_iP_i$ is called {\em separable} if each $P_i$ is.

We will prove local-global principles for zero-cycles in different settings. First, we consider  collections of overfields coming from patching and from points on the closed fiber of a model. Second, we will use this to obtain local-global principles with respect to discrete valuations. Many of the statements in this section rely on the descent results in the previous section. In our results, we have to assume a local-global principle for points (in the respective setting) over the function field $F$ as well as over all of its finite separable field extensions. This is analogous to results in the number field case; see \cite{liang}, Prop.~3.2.3, and \cite{CTAWS}, Section~8.2. Example~\ref{counterexample_F} shows that this hypothesis is actually necessary. Corollary~\ref{LGP ex} below exhibits situations in which the assumption is satisfied. 

%-----------------------------------------------------------------------------------------------------------------------------------------------------------------------------------------
\subsection{Local-global principles with respect to patches and points}\label{lgp patches points subsec}
%-----------------------------------------------------------------------------------------------------------------------------------------------------------------------------------------
In this subsection we prove that certain local-global principles for the existence of rational points imply analogous local-global principles for zero-cycles, for varieties over semi-global fields.  This will be proven in two contexts, one where the overfields come from patching, and one where they correspond to points on the closed fiber of a model.

\begin{notation} \label{geom notn}
Let $\XX$ be a normal model of a one-variable function field $F$ over a discretely valued field $K$,
and let $X$ denote the closed fiber. Let $\mc P$ be a finite nonempty set of closed points of $X$ that meets each irreducible component of $X$, and let $\mc U$ be the set of connected components of the complement of $\mc P$ in $X$.  Let $\mc B$ be the set of branches of $X$ at points of $\mc P$. 
We let $\Omega_{\XX}$ be the collection of field extensions $F_P/F$ where $P$ is a (not necessarily closed) point of $X$, and let $\Omega_{\XX,\mc P}$ denote the collection of field extensions $F_\xi/F$ where $\xi\in \mc P \cup \mc U$. (See Notation~\ref{basic notation}.)
\end{notation}

\begin{notation}\label{extension notn}
Let $\XX$ be a normal model for $F$ as above, and let $E/F$ be a finite field extension. We let $\XX_E$ denote the normalization of $\XX$ in $E$. (This is a normal model for $E$.)
If $\mc P$ is a finite set of closed points of $\XX$, we let $\mc P_E$ denote its preimage under the natural map $\XX_E\rightarrow \XX$.
\end{notation}

\begin{dfn} \label{LGP def}
Let $F$ be a field, let $Z$ be an $F$-scheme of finite type, and let $\Omega$ be a collection of overfields of $F$.  We use the following terminology:
\begin{itemize}
\item The pair $(Z,\Omega)$ {\em satisfies a local-global principle for rational points} if  it has the following property: $Z(F)\neq \varnothing$ if and only if $Z(L)\neq \varnothing$ for every $L\in \Omega$.
\item The pair $(Z,\Omega)$ {\em satisfies a local-global principle for closed points of degree prime to $\ell$} if it has the following property: $Z$ has a closed point of degree prime to $\ell$ if and only if the base change $Z_L$ has a closed point of degree prime to $\ell $ for every $L\in \Omega$.
\item The pair $(Z,\Omega)$ {\em satisfies a local-global principle for zero-cycles of degree one} if and only if it has the following property: $Z$ has a zero-cycle of degree one if and only if $Z_L$ has a zero-cycle of degree one for all $L \in \Omega$.
\end{itemize}
Similarly, one can speak of a local-global principle for separable closed points or separable zero-cycles.
\end{dfn}

Below we consider the case where $F$ is a semi-global field (i.e., $K$ is complete).  Note that a 
finite separable field extension $E/F$ is again a semi-global field.

\begin{thm} \label{sep LGP prime patches}
Let $F$ be a semi-global field with normal model $\XX$.  Let $\mc P$ be a finite nonempty set of closed points that meets every irreducible component of the closed fiber $X$ of $\XX$.  Let $Z$ be an $F$-scheme of finite type. Assume that for all finite separable field extensions $E/F$, 
$(Z_E,\Omega_{\XX_E,\mc P_E})$ satisfies a local-global principle for rational points.   
Then for every prime number $\ell$, $(Z,\Omega_{\XX,\mc P})$ satisfies a local-global principle for separable closed points of degree prime to~$\ell$.
\end{thm}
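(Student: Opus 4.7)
The ``only if'' direction is routine: if $z$ is a separable closed point of $Z$ with residue field $K/F$ of degree $d$ prime to $\ell$, then for any overfield $L$ of $F$, the algebra $K\otimes_F L$ is finite \'etale over $L$ of rank $d$, and since each field factor $K_i$ of $K\otimes_F L$ has $L$-degree dividing $d$, every factor yields a separable closed point of $Z_L$ of degree prime to $\ell$. The content of the theorem lies in the converse.

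For the converse, assume that for every $\xi \in \mc P \cup \mc U$ there is a finite separable extension $E_\xi/F_\xi$ of degree prime to $\ell$ with $Z(E_\xi)\neq\varnothing$. My plan is to descend these data to a single finite separable extension $E/F$ of degree prime to $\ell$ such that $Z_E$ acquires a point over every field in $\Omega_{\XX_E,\mc P_E}$, then apply the LGP hypothesis to $Z_E$ to extract an $E$-point of $Z$. For each $U \in \mc U$, Proposition~\ref{component to global} furnishes a finite separable extension $E^{(U)}/F$ with $E^{(U)}\otimes_F F_U \cong E_U$, so $[E^{(U)}:F] = [E_U:F_U]$ is prime to $\ell$. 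For each $P \in \mc P$, Proposition~\ref{globalize_FP_extension} furnishes a finite separable extension $E^{(P)}/F$ of degree prime to $\ell$ with $Z(E^{(P)}\otimes_F F_P)\neq\varnothing$. Taking the compositum $E$ of all these fields inside a fixed separable closure of $F$ gives a finite separable extension $E/F$ of degree prime to $\ell$.

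To check the LGP hypothesis for $Z_E$ I need $Z(F_{\xi'})\neq\varnothing$ for every $\xi' \in \mc P_E \cup \mc U_E$. The geometric input is that each such $F_{\xi'}$ lying over some $\xi \in \mc P \cup \mc U$ occurs as a field factor of the finite \'etale $F_\xi$-algebra $E\otimes_F F_\xi$, since $\XX_E$ is the normalization of $\XX$ in $E$ and $t$-adic completion is compatible with finite \'etale base change. For $\xi'=U'$ above $U$: the algebra $E\otimes_F F_U \cong E\otimes_{E^{(U)}} E_U$ has every field factor containing $E_U$, so $Z(F_{U'})\neq\varnothing$ because $Z(E_U)\neq\varnothing$. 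For $\xi'=P'$ above $P$: the algebra $E\otimes_F F_P \cong E\otimes_{E^{(P)}}(E^{(P)}\otimes_F F_P)$ has each field factor an extension of some factor of $E^{(P)}\otimes_F F_P$, over which $Z$ has a point, whence $Z(F_{P'})\neq\varnothing$. The LGP hypothesis then produces an $E$-point of $Z$, and the image of $\Spec(E)\to Z$ is a separable closed point whose residue field embeds in $E$ and therefore has degree prime to $\ell$ over $F$.

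I expect the main obstacle to be the geometric identification in the previous paragraph, namely that the overfields in $\Omega_{\XX_E,\mc P_E}$ lying over a given $\xi \in \mc P \cup \mc U$ are precisely the field factors of $E\otimes_F F_\xi$; this is where one needs to combine normalization with the patching picture, and is the only step that is not a purely formal consequence of the descent results of Section~\ref{LGP sec}.
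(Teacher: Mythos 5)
There is a genuine gap at the step where you combine the fields: you assert that the compositum $E$ of the fields $E^{(U)}$ and $E^{(P)}$ inside a separable closure has degree prime to $\ell$, but a compositum of extensions of degree prime to $\ell$ need not have degree prime to $\ell$. For instance, with $\ell=2$, take two distinct cubic subfields of the splitting field of an irreducible cubic with Galois group $S_3$ (each generated by one root); each has degree $3$, yet their compositum is the full splitting field, of degree $6$. So your chosen $E$ may fail the very property the whole argument hinges on. The paper avoids this by a counting trick: it forms the \'etale $F$-algebra $A = \bigotimes_F A_\xi$ (tensor product over all $\xi \in \mc P \cup \mc U$ of the descended fields), whose total degree is the \emph{product} of the $[A_\xi:F]$ and hence prime to $\ell$; writing $A \cong \prod_i A_i$ with each $A_i$ a field that is a compositum of (conjugate copies of) the $A_\xi$, the degrees $[A_i:F]$ sum to $\deg A$, so at least one $A_i$ has degree prime to $\ell$, and that particular compositum is taken as $E$. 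Since $E$ still contains an $F$-embedded copy of each $A_\xi$, and $\sigma(A_\xi)\otimes_F F_\xi \cong A_\xi \otimes_F F_\xi$ for any $F$-embedding $\sigma$, every field factor of $E\otimes_F F_\xi$ dominates a field factor of $A_\xi\otimes_F F_\xi$, and the rest of your argument (including the identification of the factors of $E\otimes_F F_\xi$ with the fields $F_{\xi'}$, $\xi' \in \mc P_E \cup \mc U_E$ lying over $\xi$, which the paper also uses) goes through exactly as you wrote it. So your route is essentially the paper's, minus the one combination device that makes it work.

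A smaller slip, in the easy direction: it is not true that every field factor $K_i$ of $K\otimes_F L$ has $L$-degree dividing $d=[K:F]$, nor that every factor has degree prime to $\ell$ (with $d=3$, $\ell=2$, and $L$ containing exactly one root of the cubic defining $K$, the factors have degrees $1$ and $2$). What saves the direction is that $\sum_i [K_i:L] = d$ is prime to $\ell$, so \emph{some} factor has degree prime to $\ell$, which is all that is needed for $Z_L$ to have a separable closed point of degree prime to $\ell$.
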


\begin{proof}
Let $\mc U$ be as in Notation~\ref{geom notn}. We need to prove that if $Z_{F_\xi}$ has a separable closed point $z_\xi$ of degree prime to $\ell$ for every 
$\xi \in \mc P\cup \mc U$, then $Z$ has a separable closed point $z$ of degree prime to $\ell$. For each $\xi$, let $E_\xi/F_\xi$ be the residue field extension at~$z_\xi$. 

By Proposition~\ref{component to global}, for each $U\in \mc U$, there is a finite separable field extension $A_U$ of $F$ that induces $E_U$ (i.e., $A_U\otimes_F F_U$ is isomorphic to $E_U$),
and hence has degree prime to $\ell$. In particular, each $Z(A_U\otimes_F F_U)$ is non-empty.
By Proposition~\ref{globalize_FP_extension}, for each $P\in \mc P$, there is a finite separable field extension $A_P$ of $F$ of degree prime to $\ell$ such that $Z(A_P\otimes_F F_P)$ is non-empty.  
Let $A$ be the tensor product over $F$ of all the fields $A_P$ and $A_U$.  
This is an \'etale $F$-algebra of degree prime to $\ell$, and it is the direct product of finite separable field extensions $A_i$ of $F$, each of which is a compositum of the fields $A_P$ and $A_U$.  
Since the degree of $A$ is the sum of the degrees of the fields $A_i$ over $F$, at least one of those fields 
$A_i$ has degree prime to $\ell$.  
Write $E$ for this field.  So $E/F$ is separable of degree prime to $\ell$, and for each $\xi \in \mc P\cup \mc U$, $Z(E \otimes_F F_\xi)$ is non-empty. 

Let $\XX_E$, $\mc P_E$ be as in Notation~\ref{extension notn}, let $X_E$ be the closed fiber, and  let $\mc U_E$ be the set of connected components of the complement of $\mc P_E$ in $X_E$. For each $P \in \mc P$, $E \otimes_F F_P$ is the direct product of the fields $E_{P'}$, where
$P'$ runs over the points of $\mc P_E$ that lie over $P$; and similarly for each $U \in \mc U$.  Hence for each $\xi'\in P_E \cup \mc U_E$, $Z(E_{\xi'})$ is non-empty. By assumption, this implies $Z(E)$ is nonempty; 
i.e., $Z$ has a point defined over a finite separable field extension of $F$ of degree prime to $\ell$.
\end{proof}

The hypothesis in the above theorem cannot be weakened to consider merely $F$ instead of all finite separable field extensions; see Example~\ref{counterexample_F} below.

Given a variety $V$ over a field $k$, the \textit{index} (resp.\ \textit{separable index}) of $V$ is the greatest common divisor of the degrees of the finite (resp.\ finite separable) field extensions of $k$ over which $V$ has a rational point.  This is the same as the smallest positive degree of a zero-cycle (resp.\ separable zero-cycle) on $V$.  In this terminology, Theorem~\ref{sep LGP prime patches} says that if the separable index of each $Z_{F_\xi}$ is prime to $\ell$, then so is the separable index of $Z$; i.e.,\ if each $Z_{F_\xi}$ has a zero-cycle of degree prime to $\ell$, then so does $Z$.

\begin{cor} \label{sep LGP patches}
Let $F$ be a semi-global field with normal model $\XX$, and let $X$ denote the closed fiber. Let $\mc P$ be a finite nonempty set of closed points of $X$ which meets every irreducible component of $X$. Let 
$Z$ be an $F$-scheme of finite type such that for every finite separable extension $E/F$, $(Z_E,\Omega_{\XX_E,\mc P_E})$ satisfies a local-global principle for rational points. Then the prime numbers that divide the separable index of $Z$ are precisely those that divide the separable index of some $Z_{L}$ for $L\in \Omega_{\XX,\mc P}$.  In particular, the separable index of $Z$ is equal to one if and only if the separable index of each $Z_{L}$ is equal to one. 
\end{cor}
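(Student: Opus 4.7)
The plan is to deduce this corollary directly from Theorem~\ref{sep LGP prime patches} by recasting both sides in terms of which primes divide a given separable index.

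The key elementary observation I would record first is that for any $F$-scheme $Y$ of finite type and any prime $\ell$, the prime $\ell$ divides the separable index of $Y$ if and only if $Y$ admits no separable closed point of degree prime to $\ell$. This is immediate from the definition of the separable index as the $\gcd$ of the degrees of separable closed points of $Y$ (equivalently, the degrees of finite separable field extensions of $F$ over which $Y$ has a rational point): $\ell$ fails to divide this $\gcd$ exactly when some such degree is coprime to $\ell$. Applying this observation both to $Z$ and to each base change $Z_L$ for $L \in \Omega_{\XX,\mc P}$, I would translate the first assertion of the corollary into the equivalent statement that, for every prime $\ell$, $Z$ has a separable closed point of degree prime to $\ell$ if and only if each $Z_L$ has such a point. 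Under the stated local-global hypothesis, this is precisely the conclusion of Theorem~\ref{sep LGP prime patches}, which I would invoke for each prime $\ell$ separately.

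The ``in particular'' statement then follows at once, since a positive integer equals one if and only if no prime divides it: the separable index of $Z$ equals one iff no prime divides the separable index of any $Z_L$, iff every $Z_L$ has separable index one. I do not anticipate any real obstacle here; the only thing to keep in mind is that the forward implication (base change of a global separable point produces local separable points of the same degree) is automatic, so the entire content of the argument is in the reverse implication supplied by Theorem~\ref{sep LGP prime patches}.
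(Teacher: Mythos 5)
Your proposal is correct and is essentially the paper's own argument: the paper gives no separate proof of this corollary, treating it as immediate from Theorem~\ref{sep LGP prime patches} via exactly the same translation (the remark following that theorem recasts it as saying that if each $Z_{F_\xi}$ has separable index prime to $\ell$, then so does $Z$, the converse direction being automatic by base change). Your prime-by-prime reformulation of ``$\ell$ divides the separable index'' matches this, so there is nothing to add.
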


\begin{cor} \label{LGP patches}
In Corollary~\ref{sep LGP patches}, if $\cha F=0$, or if $Z$ is regular and generically smooth, then the assertion also holds with the separable index replaced by the index. In particular, $(Z,\Omega_{\XX,\mc P})$ satisfies a local-global principle for zero-cycles of degree one.
\end{cor}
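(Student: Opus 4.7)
The plan is to deduce the corollary from Corollary~\ref{sep LGP patches} by verifying that under either of the additional hypotheses, the separable index agrees with the index, for $Z$ as well as for each base change $Z_L$ with $L\in \Omega_{\XX,\mc P}$.  Since a scheme admits a zero-cycle of degree one precisely when its index equals one, and since a prime $\ell$ divides the index exactly when every zero-cycle has $\ell$-divisible degree, the statements about indices and about zero-cycles of degree one then both follow at once.

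In the characteristic zero case the reduction is immediate: every finite extension of $F$ is separable, as is every finite extension of any overfield $L \in \Omega_{\XX,\mc P}$ (since $\cha L = 0$ as well).  Hence every closed point is separable, every zero-cycle is separable, the separable index and the index trivially coincide, and the claim follows directly from Corollary~\ref{sep LGP patches}.

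For the case that $Z$ is regular and generically smooth, I would argue as follows.  By generic smoothness, the smooth locus $Z^{\mathrm{sm}}$ is open and dense in $Z$; since smoothness is stable under base change, $(Z^{\mathrm{sm}})_L = (Z_L)^{\mathrm{sm}}$ is open and dense in $Z_L$ for every $L \in \Omega_{\XX,\mc P}$.  A standard argument (going back to Gabber) shows that on a smooth variety over a field the index and the separable index agree; applied to $Z^{\mathrm{sm}}$ and to each $(Z_L)^{\mathrm{sm}}$ this gives the corresponding equality on these smooth loci.  To transfer the equality back to $Z$ and to each $Z_L$, one then invokes a moving-type result to the effect that on a generically smooth scheme any zero-cycle is rationally equivalent to a zero-cycle of the same degree supported on the smooth locus; combined with the smooth case this yields $\ind(Z) = \ind(Z^{\mathrm{sm}})$ equal to the separable index, and analogously for each $Z_L$.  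Corollary~\ref{sep LGP patches} then delivers the conclusion.

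The main technical obstacle is this last step: justifying the moving-type input that relates the index of a (possibly non-proper) generically smooth scheme to that of its smooth locus, and doing so in particular for the base changes $Z_L$, which are generically smooth but need not be regular even when $Z$ is.  Once that standard ingredient is in hand, the reduction to Corollary~\ref{sep LGP patches} is formal.
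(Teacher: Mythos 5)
Your characteristic-zero case and the overall reduction are fine and match the paper's intent: since the index always divides the separable index, and since the index of each $Z_L$ divides that of $Z$, the only thing needed beyond Corollary~\ref{sep LGP patches} is that for the relevant schemes (in particular each $Z_L$, $L\in\Omega_{\XX,\mc P}$) the index and the separable index have the same prime divisors. The gap is that you never actually establish this. The paper's proof at this point is a direct citation of Theorem~9.2 of \cite{GLL}, which says precisely that for a regular, generically smooth scheme of finite type over a field the index equals the separable index; your proposal instead sketches a two-step re-proof (smooth case plus a ``moving-type'' step) and, as you acknowledge, leaves the moving step unjustified. That step is not a routine ingredient one can wave at: the index is the gcd of degrees of closed points, rational equivalence does not preserve degree on non-proper schemes, and in any case what is needed is to manufacture a \emph{separable} closed point of degree prime to $\ell$ from an arbitrary closed point of degree prime to $\ell$ -- which is exactly the content of the Gabber--Liu--Lorenzini theorem. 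So as written the main content of the regular, generically smooth case is missing, and the honest fix is to quote Theorem~9.2 of \cite{GLL} rather than to re-derive it.

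Two further remarks. First, the difficulty you flag about $Z_L$ possibly failing to be regular can be resolved: since $T$ is excellent, the completion maps defining $\wh R_P$ and $\wh R_U$ are regular morphisms, so the extensions $F_P/F$ and $F_U/F$ are separable; hence $Z_L$ is again regular and generically smooth whenever $Z$ is, and the cited theorem applies verbatim to each $Z_L$. Second, the equality of index and separable index is really only needed on the local side, to convert a closed point of degree prime to $\ell$ over each $L$ into a separable one before invoking Corollary~\ref{sep LGP patches}; for $Z$ itself the divisibility of the separable index by the index already gives the conclusion, and the reverse inclusion of sets of primes follows from the fact that the index of $Z_L$ divides the index of $Z$.
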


\begin{proof}
This is trivial in the former case.  In the latter case it follows from the fact that under those hypotheses the index is equal to the separable index (Theorem~9.2 of \cite{GLL}).
\end{proof}

The above results (and their proofs) show that if $(Z_E,\Omega_{\XX_E,\mc P_E})$ satisfies a local-global principle for rational points for all finite separable $E/F$, the index (resp.\ separable index) of $Z$ divides some power of the least common multiple of the indices (resp.\ separable indices) of the fields $Z_{L}$ for $L\in \Omega_{\XX,\mc P}$. It would be interesting to obtain a bound on the exponent, in terms of~$F$ and~$Z$.  (See Section~2.2 of \cite{CTHHKPS:Kampen} for a partial result in this direction.)

\begin{example}\label{counterexample_F}
This example shows that in Theorem~\ref{sep LGP prime patches} and in Corollaries~\ref{sep LGP patches}
and~\ref{LGP patches},
we cannot simply assume that $Z$ satisfies a local-global principle over $F$, but must also consider finite field extensions of $F$.  Let $k$ be a field of characteristic zero, let $T=k[[t]]$, let $K =k((t))$, let $F=K(x)$, and let $\XX$ be the projective $x$-line over $T$.  
On the closed fiber of $\XX$, take $\mc P$ to be the set 
$\{P_0,P_1\}$ consisting of the two points on the closed fiber where $x=0$ and where $x=1$.  The complement of $\mc P$ in the closed fiber is a connected affine open set $U$; take $\mc U = \{U\}$. 

Let $E = F[y]/(y^2-x(x-t)(x-1)(x-1-t))$, 
a degree two separable field extension of $F$ that splits over $F_U$ but whose base change to $F_{P_i}$ is a degree two field extension for $i=0,1$.  So the (separable) index of $Y := \Spec(E)$ 
is two over $F$ and over each $F_{P_i}$, but is one over $F_U$.
The normalization $\mc Y$ of $\mc X$ in $E$ is a degree two branched cover of $\XX$, whose
fiber over the generic point of $\XX$ is $Y$, and whose fiber 
over the closed point of $\Spec(T)$ consists of two copies of the projective $k$-line that meet at two points.  
Its reduction graph contains a loop, and so by \cite[Proposition~6.2]{HHK:H1} there is a degree three connected {\em split cover} $\ZZ \to \YY$ (i.e., $\ZZ \times_{\YY} Q$ consists of three copies of $Q$ for every point $Q \in \YY$ other than the generic point).  The fiber $Z$ of $\ZZ$ over the generic point of $\XX$ is the spectrum of a degree six separable field extension $L$ of $F$, and hence the index of $Z$ over $F$ is six.  
But since $\ZZ \to \YY$ is a split cover, the index of $Z$ over each $F_{P_i}$ is two and over $F_U$ is one (the same as for $Y$).

Since the index of $Z$ over $F$ (resp.\ over $F_{P_i}$, $F_U$) is six 
(resp.\ two, one), it follows that $Z(F_{P_i})=\emptyset$ and hence $(Z,\Omega_{\mc X,\mc P})$ trivially
satisfies a local-global principle for rational points; but the conclusions of Theorem~\ref{sep LGP prime 
patches} and its corollaries fail here if we take $\ell=3$.  The explanation 
is that $(Z_E,\Omega_{\mc Y, \mc P_E})$ does not satisfy a local-global principle for rational 
points, due to the above index 
computations for $Z$.
\end{example}

\medskip

We next prove analogous results in which {\em all} the points on the closed fiber $X$ of $\XX$ are used.  
That is, instead of considering a collection of overfields of the form $\Omega_{\XX,\mc P}$ for some finite set $\mc P$ as before, we consider the collection $\Omega_{\XX}$ of all overfields of $F$ of the form $F_P$, where $P$ ranges over all (not necessarily closed) points of $X$.

\begin{prop} \label{LGP equiv}
Let $F$ be a semi-global field with normal model $\XX$, and let $\mc P$ be a finite nonempty set of closed points of $\XX$ which meets every irreducible component of the closed fiber. Let $Z$ be an $F$-scheme of finite type.  Then the following are equivalent: 
\begin{enumerate}
\item $(Z,\Omega_{\XX})$ satisfies a local-global principle for rational points.
\item For every choice of $\mc P$ as in Notation~\ref{geom notn}, $(Z,\Omega_{\XX,\mc P})$ satisfies a local-global principle for rational points.
\end{enumerate}
\end{prop}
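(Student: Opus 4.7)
The plan is to establish the two implications separately; $(1)\Rightarrow(2)$ is essentially immediate from the field containments in Notation~\ref{basic notation}, while $(2)\Rightarrow(1)$ presents the main obstacle. The asymmetry is intrinsic: the hypothesis in (1) refers to the fields $F_P$ for all points of $X$, whereas (2) involves the different fields $F_U$ for $U\in\mc U$, and the inclusion $F_U\subseteq F_P$ for $P\in U$ goes in the direction convenient for one implication but inconvenient for the other.

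For $(1)\Rightarrow(2)$: Fix an admissible $\mc P$ with corresponding $\mc U$, and assume $Z(F_\xi)\neq\varnothing$ for every $\xi\in\mc P\cup\mc U$. By (1), it suffices to verify $Z(F_P)\neq\varnothing$ for every (not necessarily closed) point $P$ of $X$. For $P\in\mc P$ this is immediate. Otherwise $P$ lies in the open set $X\setminus\mc P$ and hence in some unique $U\in\mc U$; this covers the generic points of the irreducible components of $X$ as well, since every nonempty open subset of $X$ contains the generic point of every irreducible component it meets. The inclusion $F_U\subseteq F_P$ from Notation~\ref{basic notation} then transfers a given point of $Z(F_U)$ to a point of $Z(F_P)$ by base change.

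For $(2)\Rightarrow(1)$: Assume (2) and that $Z(F_P)\neq\varnothing$ for every point $P$ of $X$. Fix an admissible $\mc P$; by (2) applied to $\mc P$, it suffices to verify $Z(F_\xi)\neq\varnothing$ for every $\xi\in\mc P\cup\mc U$. The case $\xi\in\mc P$ is immediate. The essential difficulty is the remaining case $\xi=U\in\mc U$: we need $Z(F_U)\neq\varnothing$, yet the hypothesis only provides points over the fields $F_P$, and the containment $F_U\subseteq F_P$ runs in the wrong direction for a direct base-change argument.

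My plan for this step is to argue by contradiction, exploiting the freedom to vary $\mc P$ in (2). Assuming $Z(F_U)=\varnothing$ for some $U\in\mc U$, apply (2) to a chain of enlarged admissible puncture sets $\mc P\subseteq \mc P^{(1)}\subseteq \mc P^{(2)}\subseteq\cdots$ obtained by adding closed points of $U$ at each stage; at each level, (2) produces a bad witness $\xi^{(n)}$ which, by the hypothesis on closed points, must be an open component $U^{(n)}\in\mc U^{(n)}$. Because each such $U^{(n)}$ lies in some $U'\in\mc U$ and $F_{U'}\subseteq F_{U^{(n)}}$ forces $Z(F_{U'})=\varnothing$ as well, pigeonhole on the finite set $\mc U$ together with a nesting argument yields a descending chain $V^{(1)}\supseteq V^{(2)}\supseteq\cdots$ of bad components lying in a single irreducible component $C$ of $X$ and shrinking to the generic point $\eta_C$ of $C$. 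The crucial remaining step---and the hardest part of the proof---is to derive a contradiction from $Z(F_{V^{(n)}})=\varnothing$ for all $n$ together with $Z(F_{\eta_C})\neq\varnothing$. This will require a descent argument showing that an $F_{\eta_C}$-point of the finite-type scheme $Z$ must factor through some $F_{V^{(n)}}$, and I expect this to draw on Artin-approximation-style comparisons between $\wh R_{\eta_C}$ and the rings $\wh R_{V^{(n)}}$ analogous to those used in Proposition~\ref{henselization}.
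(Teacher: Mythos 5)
Your implication $(1)\Rightarrow(2)$ is correct and coincides with the paper's argument. The problem is $(2)\Rightarrow(1)$: the entire mathematical content of this implication is the step you postpone to the very end, namely passing from a point of $Z$ over $F_{\eta}$ (with $\eta$ the generic point of an irreducible component of $X$) to a point over $F_V$ for some non-empty affine open $V$ of that component. The paper handles exactly this by citing Proposition~5.8 of \cite{HHK:H1}, and once one has that statement no contradiction argument, chain, or pigeonhole is needed: for each irreducible component $X_i$ choose a non-empty affine open $U_i\subset X_i$, meeting no other component, with $Z(F_{U_i})\neq\varnothing$; let $\mc P$ be the (finite, non-empty) complement in $X$ of $\bigcup_i U_i$; since $Z(F_P)\neq\varnothing$ for all $P\in\mc P$ by hypothesis, apply $(2)$ to this one well-chosen $\mc P$ and conclude $Z(F)\neq\varnothing$. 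Your proposal leaves precisely this descent step unproven, and the route you suggest for it is doubtful: Artin approximation as used in Proposition~\ref{henselization} compares the henselization and the completion of a single local ring, whereas here one must compare $\wh R_{\eta_C}$ with the rings $\wh R_{V^{(n)}}$, and $F_{\eta_C}$ is \emph{not} the union (or filtered colimit) of the fields $F_{V^{(n)}}$, because completion does not commute with the colimit over shrinking $V$; an element of $\wh R_{\eta_C}$ is a $t$-adic limit of functions whose domains of regularity may shrink, so no formal limit or approximation-in-the-same-ring argument applies. So there is a genuine gap at the crucial step.

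Even granting that descent statement, the contradiction scaffolding you build around it has its own problems and buys nothing. The ``bad witnesses'' produced at successive stages need not be nested (several components of $X\smallsetminus\mc P^{(n)}$ may be bad, and the one you pick at stage $n+1$ need not lie in the one from stage $n$), and, more seriously, a countable chain obtained by deleting finitely many closed points at a time need not shrink to the generic point when the component has uncountably many closed points; consequently there is no guarantee that some $V^{(n)}$ is eventually contained in the open $V$ furnished by the descent statement, and without $V^{(n)}\subseteq V$ (hence $F_V\subseteq F_{V^{(n)}}$) you cannot conclude $Z(F_{V^{(n)}})\neq\varnothing$ to reach your contradiction. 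The freedom in $(2)$ to choose $\mc P$ adapted to $Z$, combined with \cite[Proposition~5.8]{HHK:H1}, is the whole proof; I recommend replacing the second half of your argument by that direct construction.
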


\begin{proof}
First suppose that $(Z,\Omega_\XX)$ satisfies a local-global principle for rational points. 
Let $\mc P$ and $\mc U$ be as in Notation~\ref{geom notn}.  Assume that $Z$ has a rational point over $F_\xi$ for each 
$\xi \in \mc P\cup \mc U$.  Then every point $P$ of $X$ that is not contained in $\mc P$
lies in some $U \in \mc U$, and hence $F_U \subseteq F_P$.  Thus $Z$ has a rational point over $F_P$ for every point $P \in X$ (including the generic points of irreducible components).  Hence by hypothesis, $Z$ has an $F$-point.  Thus $(Z,\Omega_{\XX,\mc P})$ satisfies a local-global principle for rational points.

Conversely, suppose that $(Z,\Omega_{\XX,\mc P})$ satisfies a local-global principle for rational points for every choice of $\mc P$ as above. 
Assume that $Z(F_P)$ is non-empty for every (not necessarily closed)
point $P \in X$.  Given an irreducible component $X_i$ of $X$, consider its generic point $\eta_i$.  
By Proposition~5.8 of \cite{HHK:H1}, since $Z(F_{\eta_i})$ is non-empty
it follows that $Z(F_{U_i})$ is also non-empty for some non-empty affine open subset $U_i \subset X_i$ that does not meet any other irreducible component of $X$.  
Let $\mc U$ be the collection of these disjoint open sets $U_i$ of $X$, and let $\mc P$ be the complement of their union in $X$; note that this is a non-empty finite set of points. 
By hypothesis, $Z$ has an $F$-point (note that $Z(F_P)$ is non-empty for each $P \in \mc P$).  So  $(Z,\Omega_{\XX})$ satisfies a local-global principle for rational points.
\end{proof}

Using Proposition~\ref{LGP equiv}, we obtain the following result, which parallels Theorem~\ref{sep LGP prime patches}, Corollary~\ref{sep LGP patches}, and Corollary~\ref{LGP patches}:

\begin{thm} \label{LGP closed fiber}
Let $F$ be a semi-global field with normal model $\XX$. Let $Z$ be an $F$-scheme of finite type such that for every finite separable field extension $E/F$, $(Z_E,\Omega_{\XX_E})$ satisfies a local-global principle for rational points.
Then
\renewcommand{\theenumi}{\alph{enumi}}
\begin{enumerate}
\item\label{prime} For every prime number $\ell$, $(Z,\Omega_{\XX})$ satisfies a local-global principle for separable closed points of degree prime to $\ell$.
\item\label{prime sep index} The prime numbers that divide the separable index of $Z$ are precisely those that divide the separable index of $Z_L$ for some $L \in \Omega_{\XX}$. In particular, the separable index of $Z$ is equal to one if and only if the separable index of each $Z_L$ is equal to one.
\item\label{prime index} If $\cha(F)=0$ or if $Z$ is regular and generically smooth, the previous assertion also holds with the separable index replaced by the index. In particular, $(Z,\Omega_{\XX})$ satisfies a local-global principle for zero-cycles of degree one.
\end{enumerate}
\end{thm}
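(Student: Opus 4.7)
The plan is to deduce parts (a), (b), (c) by reducing to the corresponding statements in the finite-patches setting (Theorem~\ref{sep LGP prime patches}, Corollary~\ref{sep LGP patches}, and Corollary~\ref{LGP patches}), in the spirit of the passage from patches to points carried out in Proposition~\ref{LGP equiv}.

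First I would verify the hypothesis of Theorem~\ref{sep LGP prime patches} for any choice of $\mc P$ as in Notation~\ref{geom notn}. For every finite separable extension $E/F$, the preimage $\mc P_E \subset \XX_E$ is again a finite nonempty set of closed points meeting every irreducible component of the closed fiber of $\XX_E$ (since $\XX_E \to \XX$ is finite and surjective). Hence by Proposition~\ref{LGP equiv} applied to $Z_E$ and $\XX_E$, $(Z_E, \Omega_{\XX_E, \mc P_E})$ satisfies a local-global principle for rational points, which is exactly what Theorem~\ref{sep LGP prime patches} requires.

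For part (a), suppose $Z_L$ has a separable closed point of degree prime to $\ell$ for every $L \in \Omega_\XX$. For each generic point $\eta_i$ of an irreducible component $X_i$ of $X$, pick an integer $d_i$ prime to $\ell$ and a finite separable extension $E_i/F_{\eta_i}$ of degree $d_i$ with $Z(E_i) \neq \varnothing$. As in the proof of Proposition~\ref{henselization}, this datum corresponds to an $F_{\eta_i}$-point of the $F$-scheme $S^{d_i}(Z) \smallsetminus \mc D$ (the complement in the $d_i$-th symmetric power of $Z$ of the image of the big diagonal). I would then apply Proposition~5.8 of \cite{HHK:H1} to this auxiliary $F$-scheme of finite type and the model $\XX$, spreading out to obtain an $F_{U_i}$-point for some non-empty affine open $U_i \subset X_i$ disjoint from the other components of $X$. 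Unwinding the correspondence, this $F_{U_i}$-point encodes a Galois-stable set of $d_i$ distinct $\overline{F_{U_i}}$-points of $Z$ whose orbit sizes sum to $d_i$; since $d_i$ is prime to $\ell$, at least one orbit has size prime to $\ell$, yielding a separable closed point of $Z_{F_{U_i}}$ of degree prime to $\ell$. Setting $\mc U := \{U_i\}$ (shrinking to ensure disjointness) and $\mc P := X \smallsetminus \bigcup_i U_i$, we obtain a finite nonempty set of closed points of $X$ meeting every component. By assumption for $\xi \in \mc P$ and by the previous step for $\xi \in \mc U$, $Z_{F_\xi}$ has a separable closed point of degree prime to $\ell$ for every $\xi \in \mc P \cup \mc U$. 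Applying Theorem~\ref{sep LGP prime patches} to this $\mc P$ then produces the desired separable closed point of $Z$ of degree prime to $\ell$.

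Parts (b) and (c) follow formally from (a): a prime $\ell$ divides the separable index of $Z$ precisely when $Z$ has no separable closed point of degree prime to $\ell$, and (a) transfers this property between $Z$ and the various $Z_L$; part (c) additionally invokes Theorem~9.2 of \cite{GLL} to identify the index with the separable index under the stated hypotheses. The main obstacle is the spreading-out step in part (a): one needs to pass from an $F_{\eta_i}$-existence statement to an $F_{U_i}$-existence statement while preserving the ``degree prime to $\ell$'' condition on the residue field. The approach circumvents the need to descend $E_i/F_{\eta_i}$ explicitly to $F$ (via Proposition~\ref{globalize_FP_extension}) by instead applying the already-available spreading-out result of \cite{HHK:H1} to the symmetric power variety $S^{d_i}(Z) \smallsetminus \mc D$, whose $L$-points parametrize separable effective zero-cycles of degree $d_i$ on $Z_L$.
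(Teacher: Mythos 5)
Your proposal is correct, and it reaches the conclusion by the same overall skeleton as the paper (spread out from the generic points $\eta_i$ to opens $U_i$ via Proposition~5.8 of \cite{HHK:H1}, form $\mc P$ and $\mc U$, verify the patch hypothesis through Proposition~\ref{LGP equiv}, and invoke Theorem~\ref{sep LGP prime patches}; parts (b) and (c) then follow formally, with \cite{GLL} for (c)). The one step you handle genuinely differently is the transfer of ``separable closed point of degree prime to $\ell$'' from $F_{\eta_i}$ to $F_{U_i}$: the paper first applies Proposition~\ref{globalize_FP_extension} at $\eta_i$ to produce a \emph{global} extension $E_i/F$ of degree prime to $\ell$ with $Z(E_i\otimes_F F_{\eta_i})\neq\varnothing$, spreads that out, and then takes a direct factor $E_{U_i}$ of $E_i\otimes_F F_{U_i}$; you instead apply the spreading-out result directly to the auxiliary $F$-scheme $S^{d_i}(Z)\smallsetminus \mc D$, borrowing the symmetric-power encoding from the proof of Proposition~\ref{henselization}. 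This is legitimate and somewhat more economical at this spot, since it avoids the Artin approximation and Sylow-descent machinery (Propositions~\ref{henselization} and~\ref{abstract globalize prop}) at the generic points --- though note that Proposition~\ref{globalize_FP_extension} is still needed downstream, inside the proof of Theorem~\ref{sep LGP prime patches}, for the points of $\mc P$, so your route does not remove it from the development; the paper's route has the advantage of reusing that one globalization tool uniformly, which is also how the henselian analogue (Proposition~\ref{LGP cl fiber henselian}) is later proved. Two small points you should make explicit, both resolved exactly as in the proof of Proposition~\ref{henselization}: to land off the diagonal image $\mc D$ you should take $E_i$ to be the residue field of a separable closed point of degree prime to $\ell$ (or one of minimal such degree), so that the $d_i$ conjugate geometric points are distinct; and the separability of the closed point of $Z_{F_{U_i}}$ extracted from a Galois orbit requires the same justification as in that proof (the entries are distinct, i.e., one is in the locus where the $S_{d_i}$-action is free, so the fiber over the $F_{U_i}$-point is \'etale and the entries are defined over separable extensions). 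With those details supplied, your argument is a valid proof of the theorem.
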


\begin{proof} We begin by proving~(\ref{prime}). Suppose that for every $P$ in the closed fiber $X$ of $\XX$, $Z$ has a point over a finite 
separable extension $E_P$ of $F_P$ having degree prime to $\ell$.  For 
each irreducible component $X_i$ of $X$ with generic point $\eta_i$,
Proposition~\ref{globalize_FP_extension} yields a finite separable field 
extension $E_i$ of $F$ of degree prime to $\ell$ such that $Z(E_i 
\otimes_F F_{\eta_i})$ is non-empty.
By Proposition~5.8 of \cite{HHK:H1}, there is a non-empty affine open 
subset $U_i \subset X_i$ that meets no other irreducible component of 
$X$, such that $Z(E_i \otimes_F F_{U_i})$ is non-empty.  Since $E_i/F$ 
is a separable field extension of degree prime to $\ell$, so is 
$E_{U_i}/F_{U_i}$ for some direct factor $E_{U_i}$ of $E_i \otimes_F 
F_{U_i}$.   Here $Z(E_{U_i})$ is non-empty, since $Z(E_i \otimes_F 
F_{U_i})$ is non-empty.

As in the proof of Proposition~\ref{LGP equiv}, we let $\mc U$ be the 
collection of open sets $U_i$, and let $\mc P$ consist of the (finitely 
many) closed points of $X$ that do not lie in any $U_i$.
For any finite separable field extension $E/F$,
$(Z_E,\Omega_{\XX_E,\mc P_E})$ satisfies a local-global principle for 
rational points,
by the hypothesis of the theorem together with Proposition~\ref{LGP 
equiv} applied to $Z_E$.  So by Theorem~\ref{sep LGP prime patches},
$(Z,\Omega_{\XX,\mc P})$ satisfies a local-global principle for separable closed
points of degree prime to $\ell$.  Since $Z$ has a separable point of degree prime
to $\ell$ over $F_\xi$ for each $\xi \in \mc P \cup \mc U$, it follows
that $Z$ has a separable point of degree prime to $\ell$.

The other two statements are then immediate, as in the analogous case for $\Omega_{\XX,\mc P}$ in Corollaries~\ref{sep LGP patches} and~\ref{LGP patches}.
\end{proof}

The following corollary exhibits some cases in which our theorem above applies, for homogeneous spaces under linear algebraic groups that are connected and (retract) rational.  Here, by a {\em homogeneous space} $Z$ under a linear algebraic group $G$ defined over a field $F$, we mean an $F$-scheme $Z$ together with a group scheme action of $G$ on $Z$ over $F$ such that the action of the group $G(\bar F)$ on the set $Z(\bar F)$ is simply transitive.

\begin{cor} \label{LGP ex}
Let $F$ be a semi-global field with normal model $\XX$.
Let $G$ be a linear algebraic group over $F$ (i.e., a smooth affine group scheme over $F$), and let $Z$ be a homogeneous space for $G$. In each of the following cases, the prime numbers that divide the separable index of $Z$ are exactly those that divide the separable index of some $Z_L$, where $L\in \Omega_{\XX}$; and moreover $(Z,\Omega_{\XX})$ satisfies a local-global principle for separable zero-cycles of degree one.
\begin{enumerate}
\item\label{retract} $G$ is connected and retract rational, and $Z$ is a torsor under $G$.
\item\label{trans} $G$ is connected and rational, and $G(E)$ acts transitively on $Z(E)$ for every field extension $E/F$.
\end{enumerate}
The same conclusion holds without separability (i.e., for the index and for zero-cycles) if $Z$ is smooth; e.g., this holds in case~(\ref{retract}).
\end{cor}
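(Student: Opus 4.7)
The plan is to reduce everything to Theorem~\ref{LGP closed fiber}, specifically parts~(\ref{prime sep index}) and~(\ref{prime index}). The only hypothesis of that theorem requiring verification is that $(Z_E, \Omega_{\XX_E})$ satisfies a local-global principle for rational points for every finite separable field extension $E/F$.

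First I would note that any such $E$ is itself a semi-global field with normal model $\XX_E$, and that the hypotheses on the pair $(G,Z)$ are preserved under base change: in case~(\ref{retract}), $G_E$ remains a connected retract rational linear algebraic group and $Z_E$ remains a $G_E$-torsor; in case~(\ref{trans}), $G_E$ remains connected and rational, and $G_E(E')$ still acts transitively on $Z_E(E')$ for every field extension $E'/E$. So in either case it suffices to verify the local-global principle for rational points of the corresponding $Z_E$ with respect to $\Omega_{\XX_E}$ over any semi-global field.

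For this I would appeal directly to the patching local-global principles over semi-global fields for torsors (resp.\ homogeneous spaces with $G(E')$-transitive action) under connected retract rational (resp.\ connected rational) linear algebraic groups, with respect to the collection of overfields associated with all (not necessarily closed) points of the closed fiber; this is the content of \cite{HHK:H1}. Once the hypothesis of Theorem~\ref{LGP closed fiber} is verified, its parts~(\ref{prime sep index}) and~(\ref{prime index}) give the desired statements about the separable index of $Z$ and about the local-global principle for separable zero-cycles of degree one.

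For the final assertion in the smooth case, \cite[Theorem~9.2]{GLL} implies that the index and the separable index of $Z$ agree (over $F$ and over each $L \in \Omega_{\XX}$), so the unqualified conclusions follow from the separable ones, exactly as in the proof of Corollary~\ref{LGP patches}. In case~(\ref{retract}), the torsor $Z$ is \'etale-locally isomorphic to the smooth group $G$ and is therefore smooth, so the smooth-case conclusion applies automatically in that case. The main potential obstacle is pinning down the precise patching local-global principle needed in case~(\ref{retract}), i.e.\ for retract rational groups rather than merely rational ones; once that is in hand, the rest of the argument is formal.
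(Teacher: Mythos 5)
Your proposal is correct and follows essentially the same route as the paper: verify that the hypotheses are preserved under finite separable base change, invoke the known patching local-global principles for rational points, and then apply Theorem~\ref{LGP closed fiber}(\ref{prime sep index}) and~(\ref{prime index}), using smoothness of a torsor under a smooth group in case~(\ref{retract}). The one point you flagged is resolved in the paper by citing Krashen \cite{Kra} for the retract rational case (with \cite[Corollary~6.5]{HHK:H1} covering the rational case and \cite[Corollary~2.8]{HHK:H1} covering case~(\ref{trans})).
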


 \begin{proof}
In both cases, $(Z,\Omega_{\XX})$ satisfies a local-global principle for rational points. 
In the first case, this was shown in \cite{Kra} (see also Corollary~6.5 of \cite{HHK:H1} for the case when $G$ is rational);  in the second this follows from Corollary~2.8 of \cite{HHK:H1}.
These local-global principles also hold for $Z_E$ if $E/F$ is a finite separable field extension of $F$, since the above assumptions on $F$ are preserved under such a base change. 
Thus Theorem~\ref{LGP closed fiber}(\ref{prime sep index}) applies.  
If $Z$ is smooth (e.g., in case (\ref{retract})), Theorem~\ref{LGP closed fiber}(\ref{prime index}) also applies.
\end{proof}

Theorem~\ref{LGP closed fiber} also applies in the 
case of certain other linear algebraic groups that need not be rational; 
see Section~4.3 of [HHK14] for examples.

%---------------------------------------------------------------------------------------------------------------
\subsection{Local-global principles over excellent henselian discrete valuation rings}\label{lgp henselian subsec}
%---------------------------------------------------------------------------------------------------------------

In this subsection, $T$ is assumed to be excellent henselian instead of complete. We now extend the results from the previous subsection to that case via Artin Approximation.

Let $T$ be an excellent henselian discrete valuation ring, with residue field $k$ and fraction field $K$.  Let $\XX$ be a normal flat projective $T$-curve with closed fiber $X$ and function field $F$; i.e.,  $\XX$ is a normal model of $F$.  Every finite separable field extension of $F$ is also the function field of a normal flat projective $T$-curve.  The completion $\wh T$ of $T$ is a complete discretely valued field with the same residue field $k$; its fraction field $\wh K$ is the completion of $K$.  Because $T$ is henselian, the base change $\wh{\XX} := \XX \times_T \wh T$ is a normal connected projective $\wh T$-curve with closed fiber $X$ and function field $\wh F := \Frac(F \otimes_K \wh K)$.  The complete local rings of $\wh{\XX}$ and $\XX$ at a point $P \in X$ are naturally isomorphic, and so we may write $\wh R_P$ and $F_P$ without ambiguity, for those rings and their fraction fields. 
Similarly, for $U$ an affine open subset of the closed fiber, 
the notations $\wh R_U$ and $F_U$ are unambiguous, being the same for $\wh{\XX}$ and $\XX$.
If $E/F$ is a finite separable field extension, we write 
$\wh E = E \otimes_F \wh F = \Frac(E \otimes_K \wh K)$.  This is the function field of the normalization $\XX_E$ of $\XX$ in $E$, whose closed fiber is denoted by $X_E$.

\begin{prop} \label{henselian pt la}
In the above situation, let $Z$ be an $F$-scheme of finite type, let $E/F$ be a finite separable field extension.
If $Z(\wh E)$ is non-empty then so is $Z(E)$.  
\end{prop}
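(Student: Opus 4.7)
The plan is to apply Artin's Approximation Theorem. First I would reduce to the case $E=F$ by replacing $Z$ with the base change $Z\times_F E$ viewed as an $E$-scheme of finite type; since $\wh E$ plays for $E$ and its normal projective $T$-model $\XX_E$ precisely the role that $\wh F$ plays for $F$ and $\XX$, the problem becomes showing $Z(F)\neq\varnothing$ given $Z(\wh F)\neq\varnothing$ for any finite-type $F$-scheme $Z$.

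Next I would realize the $\wh F$-point concretely. Choose an affine chart $\Spec F[y_1,\ldots,y_n]/(f_1,\ldots,f_m)\subseteq Z$ containing the given point, and choose a non-empty affine open $V=\Spec B\subseteq\XX$ meeting the closed fiber with $\Frac B=F$ such that the $f_j$ have coefficients in $B$ (after clearing denominators). Since $F/K$ is regular ($K$ being algebraically closed in $F$), $\wh B:=B\otimes_T\wh T$ is a domain with fraction field $\wh F$. Writing the coordinates of the $\wh F$-point as $a_i/d$ with $a_i,d\in\wh B$ and $d\neq 0$, we obtain a $\wh B$-valued point of a finite-type $B$-scheme $W$ (for instance, the open subscheme of $\Spec B[z_0,z_1,\ldots,z_n]$ where $z_0$ is invertible, cut out by the homogenizations of the $f_j$).

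Then I would apply Artin Approximation at a closed point $P$ of $V$ on the closed fiber of $\XX$. The local ring $R_P=B_{\mathfrak m_P}$ is essentially of finite type over the excellent henselian ring $T$, with henselization $R_P\h$ and $\mathfrak m_P$-adic completion $\wh R_P$ containing $\wh B$. Theorem~1.10 of~\cite{artin}, in the form used in Proposition~\ref{henselization}, yields an $R_P\h$-valued approximation of the $\wh R_P$-point of $W$, producing a point of $Z$ over a finite separable field extension of $F$ contained in $F_P\h=\Frac R_P\h$.

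The hard part will be descending this to an $F$-point of $Z$. This step exploits the global origin of the initial $\wh F$-point (it is defined over $\wh F$ and not merely over a local completion $F_P$), and uses Galois-theoretic descent along the lines of Subsection~\ref{points descent subsec} combined with the fact that $F$ is separably closed in $\wh F$ (a consequence of $T$ being henselian and $F/K$ being regular, applied to finite subextensions of the approximating field inside $F_P\h$). Carrying out this descent carefully converts the $F_P\h$-point provided by Artin approximation into an honest $F$-point of $Z$, as required.
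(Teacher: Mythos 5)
There is a genuine gap at the decisive step, and the route you choose makes it unfixable with the tools you invoke. Your opening reductions (base change to reduce to $E=F$, choosing an affine chart, clearing denominators over a finite-type $T$-algebra $B$ with $\Frac B=F$) are unobjectionable. But you then apply Artin approximation only \emph{locally}, at a single closed point $P$, comparing $\wh R_P$ with $R_P\h$ as in Proposition~\ref{henselization}. That produces a point of $Z$ over a finite separable extension $E'/F$ contained in $F_P\h$ --- and at that moment you have thrown away exactly the hypothesis that matters, namely that the original point was defined over $\wh F$ and not merely over $F_P$. The concluding ``descent'' is not an argument: the machinery of Subsection~\ref{points descent subsec} (Proposition~\ref{globalize_FP_extension} and its ingredients) only ever produces points over finite separable extensions of degree prime to a chosen prime $\ell$, never $F$-points, and even that is a statement about $Z(E\otimes_F F_P)$ rather than $Z(E)$; and the fact that $F$ is separably closed in $\wh F$ cannot be brought to bear once the $\wh F$-point has been traded for an $F_P\h$-point, since knowing only $Z(F_P\h)\neq\varnothing$ certainly does not give $Z(F)\neq\varnothing$ (take $Z=\Spec L$ for a nontrivial finite separable $L/F$ embedding into $F_P\h$, e.g.\ coming from a cover split at $P$). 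So the ``hard part'' you defer is the entire content of the proposition, and the plan you sketch for it cannot succeed.

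The paper's proof is instead a direct, one-line application of Artin's Approximation Theorem in the form of Theorems~1.10 and~1.12 of \cite{artin}, applied to $Z_E$: these results compare solutions over $\wh E=\Frac(E\otimes_K\wh K)$ (equivalently, over the base change of a finite-type $T$-algebra to $\wh T$) with solutions over $E$ itself, so an $\wh E$-point immediately yields an $E$-point, with no localization at a closed point and no descent step. Your spreading-out setup is compatible with this, but the approximation theorem must be quoted in that ``global along the closed fiber'' form --- approximating by solutions in a ring whose fraction field is $F$ --- rather than in the local form over $R_P\h$ that you use, which is where your argument breaks down.
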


\begin{proof}
By hypothesis, $Z_E$ has a point over $\wh E$.  By Artin's Approximation Theorem (Theorems~1.10 and~1.12 of \cite{artin}), $Z_E$ also has an $E$-point.  Equivalently, $Z$ has an $E$-point.
\end{proof}

Let $F$ be as above with normal model $\XX$, and let $Z$ be an $F$-scheme of finite type. 
As in Definition~\ref{LGP def}, $\Omega_{\XX}$ denotes the set of all overfields of the form $F_P$. 
By Proposition~\ref{henselian pt la} above, $(Z,\Omega_{\XX})$ satisfies a local-global principle for rational points if and only if $(\wh Z,\Omega_{\wh\XX})$ satisfies such a local-global principle, where $\wh Z:= Z \times_F \wh F$. 

Preserving the notation given just before Lemma~\ref{henselian pt la}, we have:

\begin{prop} \label{LGP cl fiber henselian}
The assertions of Theorem~\ref{LGP closed fiber}
remain true if $F$ is the function field of a curve over an excellent henselian discrete valuation ring $T$ as above. 

In particular, assume that $Z$ is an $F$-scheme of finite type which is regular and generically smooth, such that for every finite separable field extension $E/F$, $(Z_E,\Omega_{\XX_E})$ satisfies a local-global principle for rational points. Then $(Z,\Omega_{\XX})$ satisfies a local-global principle for zero-cycles of degree one.
\end{prop}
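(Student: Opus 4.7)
The plan is to adapt the proof of Theorem~\ref{LGP closed fiber} to the henselian setting, exploiting the fact that Proposition~\ref{globalize_FP_extension} already holds over an excellent henselian DVR, and that the remaining auxiliary ingredients transfer from the complete case via Proposition~\ref{henselian pt la} and the identification of the overfields $F_\xi$ between $\XX$ and $\wh\XX$.  Crucially, I do not attempt to descend finite separable extensions of $\wh F$ to $F$ (which fails in general); instead, I build the desired extension of $F$ directly.

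I first prove assertion~(\ref{prime}).  Suppose $Z_{F_P}$ admits a separable closed point of degree prime to $\ell$ for every $P\in X$.  For each irreducible component $X_i$ with generic point $\eta_i$, Proposition~\ref{globalize_FP_extension} yields a finite separable $E_i/F$ of degree prime to $\ell$ with $Z(E_i\otimes_F F_{\eta_i})\neq\varnothing$.  Using the henselian analog of Proposition~5.8 of~\cite{HHK:H1} (see below), this spreads to an affine open $U_i\subseteq X_i$ meeting no other component, with $Z(E_i\otimes_F F_{U_i})\neq\varnothing$.  Let $\mc U=\{U_i\}$ and let $\mc P$ be the set of closed points of $X$ not lying in any $U_i$.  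For each $P\in\mc P$, a second application of Proposition~\ref{globalize_FP_extension} gives a finite separable $A_P/F$ of degree prime to $\ell$ with $Z(A_P\otimes_F F_P)\neq\varnothing$.

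Now form the tensor product $A:=\bigl(\bigotimes_i E_i\bigr)\otimes_F\bigl(\bigotimes_{P\in\mc P} A_P\bigr)$ over $F$.  This is an \'etale $F$-algebra whose degree is the product of the individual degrees, hence prime to $\ell$; therefore some direct factor $E$ is a finite separable field extension of $F$ of degree prime to~$\ell$.  By the tensor-product construction, $Z(E\otimes_F F_\xi)\neq\varnothing$ for every $\xi\in\mc P\cup\mc U$, so $Z_E$ acquires a rational point over every overfield in $\Omega_{\XX_E,\mc P_E}$.  The hypothesis of the proposition, applied to the finite separable extension $E/F$, gives an LGP for rational points for $(Z_E,\Omega_{\XX_E})$, and by the henselian analog of Proposition~\ref{LGP equiv} (see below) the same holds for $(Z_E,\Omega_{\XX_E,\mc P_E})$.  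Hence $Z_E(E)\neq\varnothing$, providing the desired separable closed point of $Z$ of degree prime to~$\ell$.  Assertions~(\ref{prime sep index}) and~(\ref{prime index}) then follow from~(\ref{prime}) exactly as Corollaries~\ref{sep LGP patches} and~\ref{LGP patches} follow from Theorem~\ref{sep LGP prime patches}, using Theorem~9.2 of~\cite{GLL} for~(\ref{prime index}).

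The main technical point is to justify the henselian analogs of the two auxiliary results cited above.  As explained just before Proposition~\ref{henselian pt la}, the overfields $F_\xi$ coincide for $\XX$ and $\wh\XX$, and for any $F$-scheme $W$ of finite type and any such $\xi$, the base-change identity $W(F_\xi)=\wh W(F_\xi)$ holds (where $\wh W:=W\times_F\wh F$), because $\wh F\subseteq F_\xi$.  Consequently, Proposition~5.8 of~\cite{HHK:H1} applied to $\wh Z$ over the semi-global field $\wh F$ immediately yields its counterpart for $Z$ over $F$.  Similarly, Proposition~\ref{henselian pt la} combined with these identifications shows that $(Z,\Omega_{\XX})$ satisfies an LGP for rational points if and only if $(\wh Z,\Omega_{\wh\XX})$ does, and analogously for $(Z,\Omega_{\XX,\mc P})$; applying the semi-global Proposition~\ref{LGP equiv} to $\wh Z$ over $\wh F$ then gives the henselian version for $Z$ over $F$.
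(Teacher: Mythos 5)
Your proposal is correct and takes essentially the same route as the paper: descent of the existence of points via Proposition~\ref{globalize_FP_extension} (at generic points and at the points of $\mc P$), spreading along components by applying Proposition~5.8 of \cite{HHK:H1} to $\wh Z$ using $Z(A\otimes_F F_U)=\wh Z(A\otimes_F F_U)$, and the compositum/tensor-product construction from Theorem~\ref{sep LGP prime patches}. The only (harmless) difference is the final step: the paper simply observes that $F_U\subseteq F_P$ for $P\in U$, so $Z(E\otimes_F F_P)\neq\varnothing$ for every point of the closed fiber, and applies the hypothesis for $\Omega_{\XX_E}$ directly, whereas you route through a henselian analog of Proposition~\ref{LGP equiv} justified via passage to $\wh F$ and Artin approximation (Proposition~\ref{henselian pt la}), which is valid but slightly more machinery than needed.
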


\begin{proof}
Let $\ell$ be prime and assume that for every $P \in X$ there is a
finite separable extension $E_P/F_P$ of degree prime to $\ell$ such that
$Z(E_P)$ is non-empty.  By Proposition~\ref{globalize_FP_extension},
for each $P$ there is a finite separable field extension $A_P/F$ of
degree prime to $\ell$ such that $Z(A_P \otimes_F F_P)$ is non-empty.
If $\eta$ is the generic point of an irreducible component of $X$, then
by applying [HHK15a, Proposition 5.8] to $\wh Z_{A_\eta}$ we deduce that
$Z(A_\eta \otimes_F F_U) = \wh Z(A_\eta \otimes_F F_U)$ is non-empty for
some dense open subset $U$ of that component that meets no other
component.  Write $A_U = A_\eta$.  Let $\mc U$ be the collection of
these (finitely many) sets $U$, and let $\mc P$ be the complement in $X$
of the union of these sets $U$.

Proceeding as in the second half of the proof of Theorem~\ref{sep LGP prime patches} using the
field extensions $A_P, A_U$ of $F$ (for $P \in \mc P$ and $U \in \mc
U$), we obtain a finite separable extension $E/F$ of degree prime to
$\ell$ such that $Z(E \times_F F_\xi)$ is non-empty for each $\xi \in
\mc P \cup \mc U$.  Now every point $P \in X$ outside of $\mc P$ lies on
some $U \in U$, and hence $F_P$ contains $F_U$; so $Z(E \times_F F_P)$
is non-empty for every point $P$ on the closed fiber of $\XX$.  Thus by hypothesis, $Z(E)$ is
non-empty.  This proves the analog of Theorem~\ref{LGP closed fiber}(\ref{prime}) in the henselian case.

The henselian analogs of the remaining assertions are then immediate, as before.
\end{proof}

As a consequence, Corollary~\ref{LGP ex}
remains valid if $F$ is the function field of a curve over an excellent henselian discrete valuation ring.

In the above situation, with $T$ an excellent henselian discrete 
valuation ring and $P$ a point of $\XX$, let $R_P\h$ be the 
henselization of the local ring $R_P$ at its maximal ideal, and let 
$F_P\h$ be its fraction field.  Also, for every discrete valuation $v$ 
on $F$, let $R_v \subset F$ be the associated valuation ring, with 
henselization $R_v\h$ and completion $\wh R_v$, having fraction fields 
$F_v\h$ and $F_v$.  Recall that a
point $Q$ of $\XX$ is called the \textit{center} of $v$ if $R_v$ 
contains the
local ring $\mc O_{Z,Q}$ of $Q$ on $Z$, and if the maximal ideal 
${\mathfrak m}_Q$ of $\mc O_{Z,Q}$ is the contraction of the maximal 
ideal ${\mathfrak m}_v$ of $R_v$.

\begin{prop} \label{field containment}
Let $T$ be an excellent henselian discrete valuation ring, and let $F$ 
and $\XX$ be as above.   For every discrete valuation $v$ on $F$, 
there is a point $P$ on the closed fiber $X$ of $\XX$ such that $F_P\h 
\subseteq F_v\h$.
\end{prop}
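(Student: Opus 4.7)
The plan is to locate a center $\xi$ of $v$ on $\mc X$, take $P$ to be $\xi$ (if $\xi$ lies on $X$) or an appropriate closed specialization of $\xi$ otherwise, and then deduce $F_P\h \subseteq F_v\h$ from Artin approximation together with a compatible embedding of completions.

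First I would produce $\xi$ and $P$. If $v|_K$ is nontrivial, then since a DVR has no proper overrings in its fraction field other than itself, we have $T \subseteq R_v$; the valuative criterion of properness applied to $\mc X \to \Spec T$ produces a unique center $\xi$ of $v$ on $\mc X$, and the condition $v(t) > 0$ forces $\xi$ to lie on the closed fiber, so we set $P := \xi$. If $v|_K$ is trivial, then $K \subseteq R_v$ and $v$ admits a center $\xi$ on $\mc X$ that may lie on the generic fiber; in this case the closure $\overline{\{\xi\}} \subseteq \mc X$ is proper over $T$ and not contained in the generic fiber, so it meets $X$ nonemptily. Since $T$ is henselian, the horizontal curve $\overline{\{\xi\}}$ is a local scheme, and its intersection with $X$ is a single closed point, which I take to be $P$.

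To show $F_P\h \subseteq F_v\h$, I would invoke Artin approximation (cf.~\cite{artin}) to identify $R_P\h$ (resp.~$R_v\h$) with the subring of $\wh R_P$ (resp.~$\wh R_v$) consisting of elements algebraic over $R_P$ (resp.~$R_v$). Since in every case $R_P \subseteq R_\xi \subseteq R_v$ as subrings of $F$, it then suffices to exhibit an $R_P$-algebra embedding $\wh R_P \hookrightarrow \wh R_v$ compatible with this inclusion: under such an embedding, any $\alpha \in R_P\h$ is algebraic over $R_P$ and hence over $R_v$, so its image lies in $R_v\h$, yielding the desired inclusion on fraction fields. When $P = \xi$, so that $R_v$ dominates $R_P$, this follows from the universal property of henselization: the local homomorphism $R_P \hookrightarrow R_v \hookrightarrow R_v\h$ extends uniquely to $R_P\h \to R_v\h$, and the extension is injective because any prime in its kernel contracts to $(0) \subset R_P$ and must therefore be zero.

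The main obstacle is the remaining case, in which $v|_K$ is trivial, $\xi$ is a codimension-one point on the generic fiber, and $R_v = R_\xi = (R_P)_{\mf p}$ fails to dominate $R_P$. Here I would construct the embedding $\wh R_P \hookrightarrow \wh R_\xi$ structurally via the Cohen structure theorem: writing $\wh R_P$ as an iterated power series ring in regular parameters $\pi, \delta$ of $R_P$ with $\mf p = (\pi)$, and writing $\wh R_\xi$ as a complete DVR in $\pi$ whose residue field contains $\Frac(R_P/(\pi))$, the natural coefficient-wise inclusion produces the embedding. The prototype is $k[[t, x]] \hookrightarrow k((t))[[x]]$ when $\mc X = \mbb P^1_{k[[t]]}$, $P$ is the origin of the closed fiber, and $v$ is the $x$-adic valuation. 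Extending this structural argument to handle mixed characteristic, and any singularities of $\mc X$ (possibly by first reducing to a regular model via a birational modification), is the main technical point of the proof.
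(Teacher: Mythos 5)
Your setup is correct and parallels the paper's: locate a center $Q=\xi$ of $v$ on $\mc X$ (a non-generic point, via the valuative criterion), split into the case where $Q$ lies on the closed fiber (handled by the universal property of henselization, as in your first case) and the case where $Q$ is a codimension-one point of the generic fiber. The paper does exactly this, citing \cite[Lemma~7.3]{HHK:H1} adapted to the henselian setting for the existence of the center.

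The gap is in your treatment of the hard case. The proposed embedding $\wh R_P \hookrightarrow \wh R_\xi$ compatible with $R_P \subset R_\xi$ does not exist in general when $T$ is henselian but not complete, so the Cohen-structure / Artin-approximation route is blocked at the start. Concretely, take $T = k[t]_{(t)}\h$, $\mc X = \mbb P^1_T$, $P$ the origin of the closed fiber, $\xi$ the codimension-one point $x=0$ on the generic fiber. Then $\wh R_P = k[[t,x]]$, which contains $\wh T = k[[t]]$, whereas $\wh R_\xi$ is a complete DVR with residue field $\Frac(T) \subsetneq k((t))$. Any ring map $\phi\colon \wh R_P \to \wh R_\xi$ extending $R_P \to R_\xi$ would send $t$ to a unit, hence would extend to an embedding of fields $k((t)) \hookrightarrow \Frac(\wh R_\xi)$ fixing $\Frac(T)$; reducing modulo $x$ gives an embedding $k((t)) \hookrightarrow \Frac(T)$ over $\Frac(T)$, which is impossible for cardinality reasons. (Your prototype $k[[t,x]] \hookrightarrow k((t))[[x]]$ is exactly the complete case, where $\Frac(T) = k((t))$; that is what makes it misleading here.) Since the map $R_P \to R_\xi$ is not local, completion is not functorial along it, and this failure is essential, not merely a matter of singularities or mixed characteristic. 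A secondary issue is that your reduction to regular parameters presumes a regular $R_P$, while $\mc X$ is only assumed normal (and birational modification changes the patch fields $F_P$, so is not a free reduction).

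The paper avoids completions entirely and works directly with henselizations, which are built from finite-type (\'etale) data and so localize well along $R_P \to R_\xi$. The key observation: since $T$ is henselian and the closure of $Q$ in $\mc X$ is finite over $T$, that closure is $\Spec$ of a henselian DVR, so $\kappa(Q)$ is a henselian discretely valued field with residue field $\kappa(P)$. Hence, given any \'etale neighborhood of $P$ in $\mc X$, Hensel's Lemma applied in $\kappa(Q)$ lifts the chosen $\kappa(P)$-point over $P$ to a $\kappa(Q)$-point over $Q$, exhibiting the same scheme as an \'etale neighborhood of $Q$. Passing to the limit gives $R_P\h \subseteq R_Q\h = R_v\h$ directly, with no need for an intermediate embedding of completions. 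If you want to salvage the Artin-approximation framing, the correct move is precisely this \'etale-neighborhood lifting; without it, there is no map into $\wh R_\xi$ to compose with.
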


\begin{proof}
We proceed as in the proof of \cite[Proposition~7.4]{HHK:H1}.  First 
note that $v$ has a center $Q$ on $\XX$ that is not the generic point 
of $\XX$.  In the case that $T$ is complete, this was shown in 
\cite[Lemma~7.3]{HHK:H1}; but the proof used the completeness hypothesis 
only to cite Hensel's Lemma in the proof of Lemma~7.1 there, and that 
holds by definition for henselian rings.
If $Q$ lies on $X$, then we may take $P=Q$.  Otherwise, $Q$ is a closed 
point of the generic fiber of $\XX$, of codimension one, and $v$ is 
defined by $Q$; so $F_Q = F_v$.  Since $T$ is henselian, the closure of 
$Q$ in $\XX$ meets $X$ at a single closed point $P$.  Every \'etale 
neighborhood of $P$ in $\XX$ also defines an \'etale neighborhood of 
$Q$, since the residue field of $Q$ is a henselian discretely valued 
field whose residue field corresponds to $P$.  Thus $R_P\h \subseteq 
R_Q\h = R_v\h$, and hence $F_P\h \subseteq F_v\h$.
\end{proof}

%-----------------------------------------------------------------------------------------------------------------------------------------------------------------------------------------
\subsection{Local-Global Principles with respect to discrete valuations in characteristic $0$}\label{lgp dvr subsec}
%-----------------------------------------------------------------------------------------------------------------------------------------------------------------------------------------

In the previous subsections, we considered local-global principles with respect to a collection of overfields arising from patching. More classically, local-global principles are stated with respect to overfields that are completions at discrete valuations. The aim of this subsection is to discuss when such local-global principles for rational points imply analogous principles for zero-cycles. Moreover, we show that in some cases, the existence of local zero-cycles of degree one even implies the existence of a global rational point.

\begin{notation}
For a field $F$, let $\Omega_F$ be the collection of overfields of $F$ that are completions of $F$ at discrete valuations.
\end{notation}

Below we consider the function field $F$ of a curve over an excellent henselian discrete valuation ring.  In this situation, we begin by explaining how local-global principles with respect to discrete valuations relate to local-global principles as studied in the previous subsections.

\begin{prop} \label{dvr lgp implies pt lgp}
Let $T$ be an excellent henselian discrete valuation ring, and let $F$ be the function field of a normal flat projective $T$-curve
$\XX$, with closed fiber $X$.  Let $Z$ be an $F$-scheme of finite type. 
If $(Z,\Omega_F)$ satisfies a local-global principle for rational points then $(Z,\Omega_{\XX})$ satisfies a local-global 
principle for rational points.
\end{prop}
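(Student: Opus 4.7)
The plan is to use the hypothesis — local-global for rational points with respect to all discrete valuations of $F$ — to reduce the question to producing $F_v$-points for every discrete valuation $v$ of $F$, and then to compare each $F_v$ to the field $F_P$ of some patch on $\XX$ via Proposition~\ref{field containment}.

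Suppose $Z(F_P) \neq \varnothing$ for every (not necessarily closed) point $P$ of the closed fiber $X$; one wishes to conclude $Z(F) \neq \varnothing$. By the assumed local-global principle for $(Z,\Omega_F)$, it suffices to verify $Z(F_v) \neq \varnothing$ for each discrete valuation $v$ on $F$. Fix such a $v$. Proposition~\ref{field containment} furnishes a point $P$ of $X$ with $F_P\h \subseteq F_v\h \subseteq F_v$, so the problem is further reduced to showing $Z(F_P\h) \neq \varnothing$.

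For this final descent I would invoke Proposition~\ref{henselization} with the trivial extension $E_P = F_P$ (whose degree $1$ is prime to any chosen prime $\ell$): the proposition produces a finite separable field extension $E_P'/F_P\h$ of degree prime to $\ell$ with $Z(E_P') \neq \varnothing$, and the remark immediately following that proposition says that when $[E_P:F_P]$ is minimal one may insist on $[E_P':F_P\h] = [E_P:F_P]$. Since $1$ is minimal, one obtains $E_P' = F_P\h$ and hence $Z(F_P\h) \neq \varnothing$, completing the argument.

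The main (and essentially only) substantive step is the passage from an $F_P$-point to an $F_P\h$-point, which is a straightforward consequence of Artin's approximation theorem applied to the excellent henselian local ring $R_P\h$, and is already packaged inside Proposition~\ref{henselization}. The rest is a clean assembly of the local-global hypothesis over $\Omega_F$ with the geometric containment $F_P\h \subseteq F_v\h$ of Proposition~\ref{field containment}.
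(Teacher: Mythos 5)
Your proposal is correct and takes essentially the same route as the paper's proof: assume $Z(F_P)\neq\varnothing$ for all $P$, reduce via the hypothesis to producing an $F_v$-point for each discrete valuation $v$, use Proposition~\ref{field containment} to find $P$ with $F_P\h \subseteq F_v\h \subseteq F_v$, and pass from an $F_P$-point to an $F_P\h$-point by Artin approximation. The only cosmetic difference is that you package that last step by invoking Proposition~\ref{henselization} in its trivial degree-one case (with the minimality remark), whereas the paper cites Artin's theorem directly.
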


\begin{proof}
Suppose that $Z$ has an $F_P$-point for every point $P \in X$.  We want 
to show that $Z$ has an $F$-point.
By Proposition~\ref{field containment}, for every discrete valuation $v$ 
on $F$, there is a point $P \in X$ such that $F_P\h \subseteq F_v\h$. 
By Artin's Approximation Theorem (\cite[Theorem~1.10]{artin}) applied to 
the coordinate ring of an affine neighborhood of $P$ in $\XX$, $Z$ has 
an $F_P\h$-point.  Hence $Z$ has an $F_v\h$-point, and thus also an 
$F_v$-point.  By the local-global hypothesis, $Z$ has an $F$-point.
\end{proof}

We next consider the case of torsors under a linear algebraic group $G$ over a function field $F$. These are classified up to isomorphism by the first Galois cohomology set $H^1(F,G)$. We define $\Sha(F, G)$ to be the kernel of the local-global map $H^1(F,G)\rightarrow \prod\limits_vH^1(F_v,G)$, where $v$ runs over all discrete valuations on~$F$. Hence $\Sha(F,G)$ is trivial if and only if $(Z,\Omega_F)$ satisfies a local-global principle for rational points for every $G$-torsor $Z$.

\begin{thm}\label{LGPdvr}
Let $T$ be an excellent henselian discrete valuation ring, $K$ its field of fractions and
$k$ its residue field. Suppose that $\cha(k)  = 0$. Let $F$ be a one variable function field over $K$ and let $\XX$  be a regular 
model of $F$ over $T$.  Let $G$ be a connected  linear 
algebraic group over $F$ which is the generic fiber of a reductive smooth group scheme over $\XX$. Let $Z$ be a $G$-torsor over $F$. Suppose that for all finite separable field extensions $E/F$, $(Z_E,\Omega_E)$ satisfies a local-global principle for rational points. Then $(Z,\Omega_F)$ satisfies a local-global principle for zero-cycles of degree one.
In particular, this applies if $\Sha(E, G_E)$ is trivial for all finite separable field extensions $E/F$.   
 \end{thm}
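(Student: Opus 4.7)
The plan is to reduce the local-global principle with respect to discrete valuations to the local-global principle with respect to $\Omega_{\XX}$ (Proposition~\ref{LGP cl fiber henselian}), and then verify local solvability at each $F_P$ by combining the hypothesis with the simultaneous descent Lemma~\ref{pi-delta} and a local-global principle for torsors on a complete regular two-dimensional local ring.

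First I apply Proposition~\ref{dvr lgp implies pt lgp} to upgrade the hypothesis that $(Z_E,\Omega_E)$ satisfies a local-global principle for rational points for each finite separable $E/F$ to the corresponding statement for $(Z_E,\Omega_{\XX_E})$. Since $\cha F = 0$, Proposition~\ref{LGP cl fiber henselian} then asserts that $(Z,\Omega_{\XX})$ satisfies a local-global principle for zero-cycles of degree one. It therefore suffices to show that for each point $P$ of the closed fiber $X$ of $\XX$, $Z_{F_P}$ carries a zero-cycle of degree one. If $P$ is the generic point of an irreducible component of $X$, then $F_P$ coincides with $F_v$ for the associated discrete valuation, so the hypothesis applies directly.

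The core of the argument concerns a closed point $P \in X$. Fix a prime $\ell$, and choose regular parameters $\pi, \delta \in R_P$ generating $\mathfrak{m}_P$; they remain regular parameters of $\wh R_P$, so $(\pi)$ and $(\delta)$ are height-one primes of both $R_P$ and $\wh R_P$, corresponding to codimension-one points of $\XX$ and hence to discrete valuations $v_\pi, v_\delta$ on $F$. The hypothesis at $v_\pi, v_\delta$ supplies closed points of degree prime to $\ell$ on $Z$ over each of $F_{v_\pi}, F_{v_\delta}$. Base-changing to the completions $F_\wp, F_{\wp'}$ of $F_P$ at $(\pi), (\delta)$ (which contain $F_{v_\pi}, F_{v_\delta}$) and passing to appropriate direct factors of the resulting \'etale algebras, I obtain finite extensions $E_\wp/F_\wp$ and $E_{\wp'}/F_{\wp'}$ of degree prime to $\ell$ over which $Z$ has rational points. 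I then invoke Lemma~\ref{pi-delta} (applicable since $\cha k = 0$) to produce a finite separable extension $E/F_P$ of degree prime to $\ell$ whose integral closure $B$ of $\wh R_P$ in $E$ is again a complete regular local ring of dimension two, with regular parameters $\pi', \delta'$ lying over $\pi, \delta$, and such that $E \otimes_{F_P} F_\wp$ and $E \otimes_{F_P} F_{\wp'}$ are fields containing $E_\wp$ and $E_{\wp'}$ respectively. In particular, $Z_E$ acquires points over the completions $\Frac(\wh B_{(\pi')})$ and $\Frac(\wh B_{(\delta')})$.

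The final step, which is the principal obstacle, is to deduce $Z(E) \ne \varnothing$ from the existence of $Z_E$-points at the two height-one primes $(\pi'), (\delta')$ of the complete regular $2$-dimensional local ring $B$. Here the hypothesis that $G$ is the generic fiber of a smooth reductive group scheme $\mathcal{G}$ on $\XX$ becomes essential: $\mathcal{G}$ pulls back to a smooth reductive group scheme on $B$, and one invokes a local-global principle for $\mathcal{G}$-torsors over $\Frac(B)$ relative to the two height-one primes of $B$. This relies on the theory of non-abelian cohomology in degree two for torsors under reductive groups on regular $2$-dimensional complete local rings, as signaled in the introduction. Granted this, $Z$ has a closed point over $F_P$ of degree $[E:F_P]$ prime to $\ell$; running the argument for every prime $\ell$ yields a zero-cycle of degree one on $Z_{F_P}$, and the initial reduction then gives one on $Z$ over $F$.
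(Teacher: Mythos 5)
Your overall frame (reduce via Proposition~\ref{dvr lgp implies pt lgp} and Proposition~\ref{LGP cl fiber henselian} to producing a zero-cycle of degree one on $Z_{F_P}$ for each closed point $P$, and use Lemma~\ref{pi-delta} to package the two local extensions into one extension $E/F_P$ whose ring of integers $B$ is regular) is exactly the paper's, but the step you yourself flag as ``the principal obstacle'' is a genuine gap, and as you state it the invoked principle is false. You choose \emph{arbitrary} regular parameters $\pi,\delta$ at $P$ and then claim a local-global principle for $\mathcal G$-torsors over $\Frac(B)$ with respect to only the two completions at $(\pi')$ and $(\delta')$. No such principle holds for an arbitrary pair of parameters: e.g.\ over $B=\mathbb{C}[[x,y]]$ the $\PGL_n$-torsor attached to the cyclic algebra $(x-y,\,x+y)_n$ is split over the completions at $(x)$ and $(y)$ (it is unramified there with trivial specialization) but is nontrivial over $\Frac(B)$, since it has nontrivial residue along $x=y$. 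Triviality at two chosen codimension-one completions controls nothing about ramification of $Z$ along the other height-one primes of $B$, and that ramification is precisely what can obstruct $Z(E)\neq\varnothing$. Your appeal to ``nonabelian cohomology in degree two'' also points at the wrong tool: the relevant input is purity/injectivity for $H^1$ of reductive group schemes over two-dimensional regular rings, not degree-two cohomology (which the paper uses only later, for projective homogeneous spaces).

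What is missing is the paper's first move: blow up $\XX$ to a model $\YY$ so that $Z$ is unramified outside a divisor with regular components and normal crossings, and at each closed point $P$ of $\YY$ choose $\pi,\delta$ to be local equations of (the at most two) branches of that divisor through $P$. Then, after Lemma~\ref{pi-delta}, triviality of $Z_{E_P}$ over the completions at $\pi'$ and $\delta'$ combined with unramifiedness of $Z$ on $R_P$ away from $\pi,\delta$ shows the class is unramified at \emph{every} height-one prime of $B_P$; by purity (\cite[Cor.~6.14]{CTS}, using that $G$ extends to a reductive group scheme over the model) it comes from a class in $H^1(B_P,G)$, which is then killed by specializing along $\pi'$ (Nisnevich's theorem over the discrete valuation ring $B_P/(\pi')$) and applying Hensel's Lemma over the complete local ring $B_P$. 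This yields $Z(E_P)\neq\varnothing$ and hence the prime-to-$\ell$ point over $F_P$; the concluding reduction you describe then goes through (applied on $\YY$ rather than $\XX$). Without the blow-up/normal-crossings choice of $\pi,\delta$ and the purity-plus-specialization argument, the key local step of your proposal does not stand.
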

 
 \begin{proof} Let $Z$ be a $G$-torsor over $F$. Let $\YY$ be a sequence of blow-ups of $\XX$
 such that $Z$ is unramified except along a union of regular curves with normal crossings, i.e., $Z$ extends to a torsor on the complement of the union of these curves. 
 Since the underlying group scheme of~$G$ is smooth over $\XX$,  it is also smooth over $\YY$. 
 Let $Y$ be the closed fiber of $\YY$. 
 
 Let  $P \in \YY$ be a closed point. 
 Let $R_P$ be the regular local ring at $P$ and $\wh{R}_P$ be the completion of 
 $R_P$ at its maximal ideal.  Since the ramification locus of $Z$ is a union of regular curves 
 with normal crossings, there exist $\pi, \delta \in R_P$ such that the maximal ideal at $P$
 is $(\pi, \delta)$ and $Z$ is unramified on $R_P$ except possibly at $\pi$ and $\delta$. 
 
 Suppose that $Z$ has a zero-cycle of degree one over $F_v$ for all discrete valuations $v$ of $F$. 
 Let $\ell$ be a prime. Then there exist field extensions $E_\pi/F_{P,\pi}$ and $E_\delta/F_{P,\delta}$
of degree prime to $\ell$ such that $Z(E_\pi) \neq \emptyset$ and $Z(E_\delta) \neq \emptyset$. Here $F_{P,\pi}:=(F_P)_\pi$ and $F_{P,\delta}:=(F_P)_\delta$ are associated to $F_P$ as in the beginning of Subsection~\ref{local descent subsec}.
By Lemma~\ref{pi-delta}, there exists a field extension $E_P/F_P$ of degree prime to $\ell$ such that 
the integral closure $B_P$ of $\wh{R}_P$ in $E_P$ is a complete regular local ring and
$E_\pi$ (resp. $E_\delta$) is isomorphic to a subfield of the field $E_P \otimes_{F_P} F_{P, \pi}$ (resp. $E_P \otimes_{F_P} F_{P,\delta}$).
Moreover, the maximal ideal of $B_P$ is of the form $(\pi' , \delta')$ for unique primes $\pi'$ and $\delta'$ lying over 
$\pi$ and $\delta$, respectively. 

Since $Z(E_\pi)$ and $Z(E_\delta)$ are  non-empty, $Z(E_P \otimes F_{P, \pi})$ and $Z(E_P \otimes F_{P, \delta})$ are
 non-empty.  Hence $Z_{E_P \otimes F_{P, \pi}} \simeq G_{E_P \otimes F_{P, \pi}}$
 and $Z_{E_P \otimes F_{P, \delta}} \simeq G_{ E_P \otimes F_{P, \delta}}$.
In particular $Z$ is unramified at  $\pi'$ and $\delta'$. 
But $Z$ is unramified on $R_P$ except possibly at $\pi$ and $\delta$, thus it is indeed unramified at every height one prime ideal 
of $B_P$.  Hence the class of $Z_{E_P}$ comes from a class $\zeta$  in $H^1(B_P,  G)$ (\cite[Corollary 6.14]{CTS}). 
Since $\zeta$ is trivial over the completion  at $\pi'$, the image  of $\zeta$ in $H^1(k(\pi'), G)$
is trivial, where $k(\pi')$ is the residue field of $E_\pi$. Since  $\zeta \in H^1(B_P, G)$,  the image of $\zeta$ in $H^1(k(\pi'), G)$ comes from the image $\zeta(\pi')$ 
of $\zeta$ in $H^1(B_P/(\pi'), G)$. Since $B_P/(\pi')$ is a discrete valuation ring with field of fractions $k(\pi')$ and
the image of $\zeta(\pi')$ in $H^1(k(\pi'), G)$ is trivial, $\zeta(\pi')$ is trivial (\cite{Nis}). Hence 
the image of $\zeta$ in $H^1(\kappa, G)$ is trivial, where $\kappa$ is the residue field of $B_P$.
Since $B_P$ is a complete regular local ring, Hensel's Lemma implies that $\zeta$ is trivial in $H^1(B_P, G)$, and hence 
its image is trivial in $H^1(E_P, G)$. Thus $Z(E_P)\neq \emptyset$. Since $\operatorname{gcd}(\ell,[E_P:F_P])=1$ and since the prime $\ell$ is arbitrary, $Z$ has a zero-cycle of degree one over $F_P$. 

Since $(Z,\Omega_F)$ satisfies a local-global principle for rational points, $(Z,\Omega_{\YY})$ satisfies a local-global principle for rational points by Proposition~\ref{dvr lgp implies pt lgp}, and similarly for finite separable field extensions $E/F$.
Since $Z$ has a zero-cycle of degree one over $F_P$ for all $P \in Y$, 
$Z$ has a zero-cycle of degree one over $F$ by Proposition~\ref{LGP cl fiber henselian}.  
\end{proof}

\begin{remark}\label{weaker hyp remark}
As the proof shows, the above theorem holds under the weaker assumption that for every blow-up (i.e., birational projective 
morphism) $\mc Y \to \mc X$, and for every finite separable field 
extension $E/F$, $(Z_E,\Omega_{\mc Y_E})$ satisfies a local-global 
principle for rational points.  This hypothesis is equivalent to the one stated in Theorem~\ref{LGPdvr} in important special cases, e.g.\ when $K$ is  complete, or when $k$ is algebraically closed and
$G$ is stably rational.  The former case follows from \cite[Theorem~8.10(ii)]{HHK:H1}.  For the latter case, the henselization $F_P\h$ of $F$ at a point $P$ on the closed fiber is algebraic over $F$, and so every  discrete valuation on $F_P\h$ restricts to a discrete valuation on $F$.  Thus 
if $Z(F_v) \ne \emptyset$ for every $v \in \Omega_F$, then $Z((F_P\h)_v) \ne \emptyset$ for every $v \in \Omega_{F_P\h}$.  By \cite[Cor.~7.7]{BKG}, it follows that $Z(F_P\h) \ne \emptyset$ and hence $Z(F_P)\ne \emptyset$ as required. 
\end{remark}

\begin{cor}
Let $T$, $F$, $\XX$, and $G$ be as in Theorem~\ref{LGPdvr}. Assume that the linear algebraic group $G$ is retract rational over $F$, and let $Z$ be a $G$-torsor over $F$.
Then $(Z,\Omega_F)$ satisfies a local-global principle for zero-cycles of degree one.
\end{cor}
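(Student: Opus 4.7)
The plan is to apply Theorem~\ref{LGPdvr}, which would immediately give the conclusion once we have verified its hypothesis that $(Z_E, \Omega_E)$ satisfies a local-global principle for rational points for every finite separable field extension $E/F$. Rather than establishing this directly (which would amount to showing that $\Sha(E, G_E)$ is trivial for every such $E$), I would use the weaker form of the hypothesis furnished by Remark~\ref{weaker hyp remark}: namely, it suffices to show that for every blow-up $\mc Y \to \XX$ and every finite separable field extension $E/F$, the pair $(Z_E, \Omega_{\mc Y_E})$ satisfies a local-global principle for rational points.

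To verify this, I would first observe that retract rationality is preserved under arbitrary field base change: if $U \subseteq G$ and $U' \subseteq \mbb A^n_F$ are dense open subsets together with morphisms $U \to U' \to U$ whose composition is the identity, then the corresponding base-changed morphisms exhibit the same property for $G_E$ over $E$. Combined with the fact that $G_E$ remains connected, this shows that for every finite separable extension $E/F$, $Z_E$ is a torsor under the connected retract rational linear algebraic group $G_E$ over~$E$.

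For any blow-up $\mc Y \to \XX$, the normalization $\mc Y_E$ of $\mc Y$ in $E$ is a normal projective model of $E$ over the excellent henselian discrete valuation ring~$T$. I would then apply Corollary~\ref{LGP ex}(\ref{retract}) to the torsor $Z_E$ over $E$ with model $\mc Y_E$, invoking its extension to the henselian case stated just after Proposition~\ref{LGP cl fiber henselian}. This yields that $(Z_E, \Omega_{\mc Y_E})$ satisfies a local-global principle for rational points, verifying the weaker hypothesis of Theorem~\ref{LGPdvr} (in the form of Remark~\ref{weaker hyp remark}) and completing the deduction. No substantive obstacle is anticipated: the argument reduces to combining the existing patch-style local-global principle for torsors under retract rational groups (from~\cite{Kra}) with Theorem~\ref{LGPdvr}, which bridges from rational-point principles over patches to zero-cycle principles over discrete valuations.
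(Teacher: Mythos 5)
Your proposal is correct and follows essentially the same route as the paper: reduce via Remark~\ref{weaker hyp remark} to a rational-point local-global principle for $(Z_E,\Omega_{\mc Y_E})$ over all blow-ups and finite separable extensions, and settle that using the patching result of \cite{Kra} for torsors under connected retract rational groups (the paper cites \cite{Kra}, ``see also Corollary~\ref{LGP ex}''). The only cosmetic point is that the rational-point principle you need is what is established inside the proof of Corollary~\ref{LGP ex} (via \cite{Kra}, in its henselian extension), rather than the stated conclusion of that corollary, which concerns zero-cycles and the separable index.
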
 
\begin{proof}
By Remark~\ref{weaker hyp remark}, it suffices to check that $(Z_E,\Omega_{\YY_E})$ satisfies a local-global principle for rational points, for all blow-ups $\YY$ of a regular model $\XX$ and all finite separable field extensions $E/F$. That condition holds by \cite{Kra} (see also Corollary~\ref{LGP ex}).
\end{proof}

\subsection{Local-global principles with respect to discrete valuations: case of an algebraically closed residue field} \label{lgp dvr alg cl subsec}

The remainder of this section is devoted to results about torsors and 
projective homogeneous spaces over certain 2-dimensional fields over an 
algebraically closed field $k$ of characteristic zero, including 
semi-global fields.  More precisely:

\begin{hyp} \label{2dim field hyp}
Let $k$ be an algebraically closed field of characteristic zero. Let $F$ 
be one of the following:
\renewcommand{\theenumi}{\alph{enumi}}
\begin{enumerate}
\item\label{local} The fraction field of a normal, 2-dimensional, 
excellent henselian local ring with residue field $k$.
\item\label{semi-global} A one-variable function field over the fraction 
field of an excellent henselian discrete valuation ring with residue 
field $k$.
\end{enumerate}
\end{hyp}

Note that if $F$ is a field as in Hypothesis~\ref{2dim field hyp}, then finite field extensions of $F$ are also of the same type.

\begin{prop}\label{combined}
Let $F$ be a field as in Hypothesis~\ref{2dim field hyp}.  Then $F$ has 
the following properties:

\begin{enumerate}
\item\label{cd} $\cd(F) \leq 2$.

\item\label{perind} For central simple algebras over finite field extensions of  $F$,
period and index coincide.

\item\label{H1} For any semisimple simply connected group $G$ over $F$,
$H^1(F,G)=1$.

\item\label{H2}For any quasi-trivial torus $P$ over $F$, the diagonal map
$$ H^2(F,P) \to \prod_v H^2(F_v,P),$$
is injective. (Here $v$ runs through all discrete valuations on $F$.)
\end{enumerate}
\end{prop}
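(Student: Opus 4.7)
The plan is to prove each of the four assertions separately, with the common theme that each is known over the corresponding \emph{complete} base (a complete discretely valued ring in case~(\ref{semi-global}) of Hypothesis~\ref{2dim field hyp}, or a complete normal two-dimensional local ring in case~(\ref{local})), and the henselian statement is deduced from the complete one via the Artin approximation tool provided by Proposition~\ref{henselian pt la}.

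For~(\ref{cd}), I would treat the two cases separately. In case~(\ref{semi-global}), since $k$ is algebraically closed of characteristic zero, $\cd(K)\leq 1$, so $\cd(F)\leq \cd(K)+\operatorname{trdeg}(F/K)=2$ by the standard formula for function fields. In case~(\ref{local}), resolving the singularities of $\Spec(A)$ yields a regular two-dimensional excellent henselian scheme with algebraically closed residue fields at its closed points, and a standard computation then gives $\cd(F)\leq 2$.

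For~(\ref{perind}) and~(\ref{H1}), I would deduce each statement from its complete counterpart. For~(\ref{H1}): given a semisimple simply connected $G$ over $F$ and a $G$-torsor $Z$, the complete case of Serre's Conjecture~II (known in both cases of the hypothesis) gives $Z(\wh F)\neq \varnothing$, and Proposition~\ref{henselian pt la} applied with $E=F$ then yields $Z(F)\neq \varnothing$. For~(\ref{perind}): for a central simple algebra $D/F$ of period $n$, Proposition~\ref{henselian pt la} applied to the Severi--Brauer variety of $D$ gives the injectivity $\Br(F)\hookrightarrow\Br(\wh F)$, so the period is preserved under base change; combining this with the equality of period and index over $\wh F$ and with the descent results of Subsection~\ref{points descent subsec} (in particular Proposition~\ref{abstract globalize prop}, applied prime by prime to suitable Severi--Brauer varieties of tensor powers of $D$), one transfers a separable splitting field of degree $n$ from $\wh F$ down to $F$, giving $\ind(D)=\per(D)$.

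For~(\ref{H2}), Shapiro's lemma identifies $H^2(F,R_{L/F}\mathbb{G}_m)\cong\Br(L)$; since a quasi-trivial torus is a product of induced tori, the claim reduces to the injectivity of $\Br(L)\to\prod_v\Br(L_v)$ for each finite separable extension $L/F$, and since any such $L$ again satisfies Hypothesis~\ref{2dim field hyp}, it suffices to prove this injectivity for $F$ itself. The plan is to fix a regular model $\XX$ of $F$, apply the patching local-global principle for Brauer classes (known in the complete case by work of Harbater--Hartmann--Krashen and of Colliot-Th\'el\`ene--Parimala--Suresh) to reduce vanishing of $\alpha\in\Br(F)$ to vanishing in each $\Br(F_P)$, and then use Proposition~\ref{field containment} to deduce vanishing in every $\Br(F_P)$ from vanishing in all the completions $\Br(F_v)$. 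The main obstacle is the transfer of the patching step from the complete to the henselian setting, which is handled by combining Proposition~\ref{henselian pt la} with the descent results of Section~\ref{LGP sec}.
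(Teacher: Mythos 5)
Your overall strategy (prove everything over the completion and descend by Artin approximation) is genuinely different from the paper's, which for the most part simply cites known results: for case~(\ref{local}) items~(\ref{cd})--(\ref{H1}) are \cite[Thm.~1.4]{CTGP}; item~(\ref{H2}) is \cite[Cor.~1.10]{COP} in both cases; and in case~(\ref{semi-global}) the paper gets~(\ref{cd}) from $C_1\Rightarrow C_2$, (\ref{perind}) from \cite[Cor.~5.6]{HHK}, and~(\ref{H1}) by showing $\cd(F^{\operatorname{ab}})\le 1$ (pro-cyclic $\Gal(\bar K/K)$ plus Tsen) and invoking \cite[Thm.~1.2(v)]{CTGP} --- an argument that works directly over the henselian base, so no passage to the completion is needed. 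Your route could in principle work for~(\ref{cd}) and~(\ref{H1}), but two of your steps have genuine gaps. First, in the period--index argument you invoke Proposition~\ref{abstract globalize prop} with $L'=\wh F$; that proposition requires $L\subseteq L'\subseteq E'$ to be \emph{separable algebraic} extensions, whereas $\wh F=\Frac(F\otimes_K\wh K)$ is a transcendental (typically enormous) extension of $F$, so the proposition simply does not apply. (A correct replacement is to apply Proposition~\ref{henselian pt la} to the generalized Severi--Brauer variety $\SB_m(D)$ with $m=\per(D)$, since a rational point on it certifies $\ind(D)\mid m$; note also that Proposition~\ref{henselian pt la} is stated only for case~(\ref{semi-global}), so in case~(\ref{local}) you must instead apply Artin approximation directly over the excellent henselian local ring $A$.)

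Second, your argument for~(\ref{H2}) does not work as sketched. Proposition~\ref{field containment} produces, for each discrete valuation $v$, \emph{some} point $P$ with $F_P\h\subseteq F_v\h$; this transfers information from patches to valuations (as in Proposition~\ref{dvr lgp implies pt lgp}), not in the direction you need. Vanishing of $\alpha\in\Br(F)$ in every $\Br(F_v)$ gives no conclusion in $\Br(F_P)$ via any field containment, because $F_P\not\subseteq F_v$. What is actually required at a closed point $P$ is a ramification/purity argument over $\wh R_P$: vanishing at the divisorial valuations shows $\alpha_{F_P}$ is unramified at all height-one primes of $\wh R_P$, hence comes from $\Br(\wh R_P)$, which is trivial since the residue field is algebraically closed of characteristic zero (compare the analogous step with $H^1(B_P,G)$ in the proof of Theorem~\ref{LGPdvr}). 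This purity step is the real content of \cite[Cor.~1.10]{COP}, which the paper cites; your sketch omits it, and it also does not address case~(\ref{local}), where there is no semi-global model to which the patching injectivity of the Brauer group applies. The Shapiro-lemma reduction of~(\ref{H2}) to Brauer-group injectivity over finite extensions is fine, but the injectivity itself is where the work lies.
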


\begin{proof}
We first assume that $F$ is as in~Hypothesis~\ref{2dim field hyp}(\ref{local}). For the first three properties, see \cite[Thm. 1.4]{CTGP}, whereas property~(\ref{H2}) is
an immediate consequence of  \cite[Cor. 1.10]{COP}. 

Next assume $F$ is as in~Hypothesis~\ref{2dim field hyp}(\ref{semi-global}). For property~(\ref{cd}), the fraction field $K$ of an excellent henselian discrete valuation ring with algebraically closed residue field is $C_1$ by \cite[II \S3.3(c)]{Serre:CG}; 
and hence a one-variable function field $F$ over $K$ is $C_2$ by \cite[II \S4.5 Example~(b)]{Serre:CG}.  Therefore $\cd(F) \le 2$ by \cite[end of II \S4.5]{Serre:CG}, 
giving property~(\ref{cd}).  Property~(\ref{perind}) is a special case of 
\cite[Corollary~5.6]{HHK}. Property~(\ref{H2}) follows from \cite[Corollary~1.10(b)]{COP}. It remains to show property~(\ref{H1}). The absolute Galois group of the fraction field $K$ of an excellent henselian discrete valuation ring with residue field $k$ is pro-cyclic and in particular abelian, because the finite extensions of $K$ are all obtained by taking roots of the uniformizer. Hence $\bar{K}\otimes_KF/F$ is a pro-cyclic field extension, of cohomological dimension at most~$1$ by Tsen's theorem. But $F^{\operatorname{ab}}$ contains 
$\bar{K}\otimes_KF$ since the absolute Galois group of $K$ is abelian, 
and moreover this field extension is algebraic.  Hence 
$\cd(F^{\operatorname{ab}})\leq 1$ by \cite[II \S4.1 
Proposition~10]{Serre:CG}.
The statement now follows from \cite[Thm.~1.2~(v)]{CTGP} (see also \cite[end of \S 6]{PICM}).
\end{proof}

\begin{remark}
As pointed out by J. Starr, Prop.~\ref{combined}(\ref{cd})-(\ref{H1}) for fields of type~(\ref{semi-global}) in Hypothesis~\ref{2dim field hyp} can also be deduced by a localization process from global results established using the theory of rationally simply connected varieties (see \cite[Prop.~4.4]{Starr}). 
\end{remark}

Let $G$ be a connected reductive linear algebraic group over a field $K$ of characteristic zero. 
By \cite[Lemma 1.4.1]{BK} and \cite[Prop. 4.1, Cor. 5.3]{CT}, there 
exists a central extension 
\[1 \to P \to H \to G \to 1 \eqno{(*)}\] 
such that $H$ is a connected reductive group, its derived group   
$H^{ss}$ is a (semisimple) simply connected group, the quotient $H/H^{ss}$ is a torus $Q$
 and the kernel $P$ is a quasi-trivial torus. If moreover $G$ is rational over $K$, 
there exists such a presentation of $G$ for which the torus $Q$ is a quasi-trivial torus.

\begin{prop} \label{cd2} Let $F$ be any  field of characteristic zero and let $G$ be a connected reductive linear algebraic 
group over $F$. Suppose that  $H^1(F, M) = 1$ for all semisimple simply connected groups $M$ over $F$. Let $Z$ be a $G$-torsor over $F$.
\renewcommand{\theenumi}{\alph{enumi}}
\renewcommand{\labelenumi}{(\alph{enumi})}
\begin{enumerate}

\item\label{RP} If $Z$ has  a zero-cycle of degree one, then $Z$ has a rational point. 

\item\label{RP-crit}  If $G$ is rational, then $Z$ has a rational point if and only if
the  image of the class of $Z$ under the boundary map $H^1(F,G) \to H^2(F,P)$ vanishes. 
(Here $P$ is as in the above short exact sequence.) 
\end{enumerate}
\end{prop}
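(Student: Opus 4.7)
The plan is to use the central extension $(*)$ together with the derived-group sequence $1\to H^{ss}\to H\to Q\to 1$ to convert both statements into corestriction / Hilbert~90 arguments in the abelian groups $H^2(F,P)$ and $H^1(F,Q)$, with the standing hypothesis $H^1(F,M)=1$ for semisimple simply connected $M$ serving to rigidify $H^1(F,H)$ at the end.

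For part~(\ref{RP}), a zero-cycle of degree one provides closed points with residue fields $L_i/F$ of degrees $d_i$ and multiplicities $n_i$ with $\sum n_id_i=1$; over each $L_i$ we have $[Z]_{L_i}=1$ in $H^1(L_i,G)$. The class $\delta[Z]\in H^2(F,P)$ is abelian, and the identity $\operatorname{cor}_{L_i/F}\circ\operatorname{res}_{L_i/F}=d_i\cdot\mathrm{id}$ gives $d_i\cdot\delta[Z]=0$ for every $i$; the Bezout relation then forces $\delta[Z]=0$, and exactness of the sequence coming from $(*)$ produces a lift $[W]\in H^1(F,H)$ of $[Z]$. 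Over each $L_i$, the class $[W]_{L_i}$ lies in the fiber of $H^1(L_i,H)\to H^1(L_i,G)$ above the neutral class; since $P$ is central in $H$, that fiber is a torsor under $H^1(L_i,P)$, which is trivial by Hilbert~90 because $P$ is quasi-trivial, so $[W]_{L_i}=1$. Push $[W]$ forward along $H\to H/H^{ss}=Q$ to obtain $[W_Q]\in H^1(F,Q)$ with $[W_Q]_{L_i}=1$; now the same corestriction argument, this time in the abelian group $H^1(F,Q)$, yields $[W_Q]=0$. Exactness of
\[ H^1(F,H^{ss})\to H^1(F,H)\to H^1(F,Q) \]
together with $H^1(F,H^{ss})=1$ shows that the preimage of the neutral element under the right map is trivial as a pointed set, so $[W]=1$, and therefore $[Z]=1$, i.e.\ $Z(F)\neq\emptyset$.

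For part~(\ref{RP-crit}), the ``only if'' direction is tautological from functoriality of $\delta$. For the converse, the vanishing of $\delta[Z]$ produces a lift $[W]\in H^1(F,H)$ exactly as before; the extra hypothesis that $G$ is rational lets one select the presentation $(*)$ so that $Q$ is also quasi-trivial, so $H^1(F,Q)=1$ by Hilbert~90 and $[W_Q]=1$ is automatic. The same pointed-set kernel argument then gives $[W]=1$ and $[Z]=1$.

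The main subtlety in both parts is to keep the nonabelian cohomology honest: one has to use that the fiber of $H^1(-,H)\to H^1(-,G)$ over the neutral class is a torsor under the abelian group $H^1(-,P)$ (because $P$ is central), and that the image--kernel relation for $1\to H^{ss}\to H\to Q\to 1$ holds as pointed sets at $H^1(-,H)$. Both facts are standard, but they are the points where non-commutativity genuinely enters, and the argument would break without the quasi-triviality of $P$ (in part~(a)) or the additional quasi-triviality of $Q$ provided by rationality of $G$ (in part~(b)).
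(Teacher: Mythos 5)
Your argument is correct and is essentially the paper's own proof: the same central extension $1\to P\to H\to G\to 1$ with quasi-trivial $P$, restriction--corestriction in $H^2(F,P)$ to lift $[Z]$ to $H^1(F,H)$, triviality of the lift over the residue fields $L_i$ via $H^1(L_i,P)=1$, a second restriction--corestriction in $H^1(F,Q)$, and the trivial kernel of $H^1(F,H)\to H^1(F,Q)$ coming from $H^1(F,H^{ss})=1$; part (b) likewise matches, using rationality of $G$ to take $Q$ quasi-trivial so that $H^1(F,H)=1$. The only cosmetic imprecision is calling the fiber of $H^1(L_i,H)\to H^1(L_i,G)$ over the neutral class a \emph{torsor} under $H^1(L_i,P)$ (it is an orbit, not necessarily free), which is harmless here since that group is trivial.
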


\begin{proof} 
For the proof of the first statement, let $1 \to P \to H \to G \to 1$ be as in $(*)$ above. Since $P$ is a quasi-trivial torus, $H^1(F, P) $ is trivial and 
 hence  we have an  exact sequence of pointed sets
 $$ 1 \to H^1(F, H) \to H^1(F, G) \to H^2(F, P).$$

 Let $[Z] \in H^1(F, G)$ denote the class of $Z$, and let $\zeta $ be the image of $[Z]$ in $H^2(F, P)$. Suppose that $Z$ has  a zero-cycle of degree one. Then since $P$ is a torus, 
 using restriction-corestriction, it follows that $\zeta$ is the trivial element. 
 Hence there exists an $H$-torsor $\widetilde{Z}$ such that its class $[\widetilde{Z}] \in H^1(F, H)$ maps to $[Z]$ in $H^1(F, G)$.
 Since for any finite field extension $L/F$, the map $H^1(L, H) \to H^1(L, G)$ has trivial kernel  and $Z$ has a zero-cycle of degree one, 
 $\widetilde{Z}$ has a zero-cycle of degree one. 
  
 Consider the exact sequence $ 1 \to H^{ss} \to H \to Q \to 1$. 
 Since $H^{ss}$ is semisimple simply connected, by the hypothesis $H^1(F, H^{ss})$ is trivial.
 Hence the map $H^1(F, H) \to H^1(F, Q)$  has trivial kernel. 
 
 Let $\bar{Z}$ be a torsor representing the image of $[\widetilde{Z}]$ in $H^1(F, Q)$. 
 Since $\widetilde{Z}$ has a zero-cycle of degree one, 
 $\bar{Z}$ has a zero-cycle of degree one. Since $Q$ is a torus,  
 once again the restriction-corestriction argument gives that 
$\bar{Z}$ has a rational point. Since  the map $H^1(F, H) \to H^1(F, Q)$  has trivial kernel, 
$\widetilde{Z}$ is trivial.  
 In particular $Z$ is trivial and hence has a rational point. 
 
 To see the second assertion, assume that $G$ is rational over $F$. Then by the discussion preceding Proposition~\ref{cd2}, we may assume that in the exact sequence
 $$1 \to H^{ss} \to H \to Q \to 1,$$
 with $H^{ss}$ semisimple simply connected, $Q$ is a quasi-trivial torus. We thus have
 $H^1(F,H)=1$. Cohomology of the exact sequence
 $$ 1 \to P \to H \to G \to 1$$
 then gives the result.
 \end{proof} 
 
 \begin{thm} \label{cd2tors}
Let 
$F$ be a field as in Hypothesis~\ref{2dim field hyp}. Let $G$ be a connected and rational  linear 
algebraic group over $F$.
Let $Z$ be a $G$-torsor over $F$. 
If $Z$ has a zero-cycle of degree one over $F_v$ for all discrete valuations $v$ of $F$, 
then $Z$ has a  rational point   over $F$.
\end{thm}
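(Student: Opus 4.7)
The plan is to reduce the existence of a rational point on $Z$ to the vanishing of a class in $H^2(F,P)$ for a quasi-trivial torus $P$, and then to exploit the local-global injectivity for such $H^2$ groups supplied by Proposition~\ref{combined}(\ref{H2}). Since $G$ is connected and rational, the discussion preceding Proposition~\ref{cd2} provides a central extension
\[
1 \to P \to H \to G \to 1
\]
in which $P$ is a quasi-trivial torus, $H$ is connected reductive, $H^{ss}$ is semisimple simply connected, and the quotient $Q := H/H^{ss}$ is also a quasi-trivial torus. By Proposition~\ref{combined}(\ref{H1}), the field $F$ satisfies $H^1(F,M)=1$ for every semisimple simply connected $F$-group $M$, so Proposition~\ref{cd2}(\ref{RP-crit}) applies: $Z$ has an $F$-rational point if and only if the image $\zeta \in H^2(F,P)$ of $[Z] \in H^1(F,G)$ under the connecting map is trivial. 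The goal thus becomes to show $\zeta = 0$.

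To prove $\zeta = 0$, I would invoke Proposition~\ref{combined}(\ref{H2}), which asserts that the diagonal map $H^2(F,P) \to \prod_v H^2(F_v,P)$ is injective, the product ranging over all discrete valuations $v$ of $F$. It therefore suffices to show that each localization $\zeta_v \in H^2(F_v,P)$ vanishes.

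Fix a discrete valuation $v$ on $F$. Since $\cha F = 0$, the hypothesis that $Z$ admits a zero-cycle of degree one over $F_v$ furnishes finite (automatically separable) field extensions $E_1,\dots,E_r$ of $F_v$, of degrees $d_1,\dots,d_r$ with $\gcd(d_1,\dots,d_r) = 1$, such that $Z(E_i) \neq \varnothing$ for every $i$. Then $[Z]_{E_i}$ is trivial in $H^1(E_i,G)$, so $\zeta_v|_{E_i} = 0$ in $H^2(E_i,P)$. Because $P$ is abelian, the standard identity $\cres \circ \operatorname{res} = [E_i : F_v]$ on $H^2(F_v,P)$ gives $d_i\,\zeta_v = 0$ for every $i$. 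Coprimality of the $d_i$ then forces the order of $\zeta_v$ to divide $1$, whence $\zeta_v = 0$, as required.

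The main obstacle is securing the correct resolution of $G$: the rationality hypothesis on $G$ is what allows $Q$ to be chosen quasi-trivial, which in turn yields $H^1(F,H) = 1$ (using Proposition~\ref{combined}(\ref{H1}) for $H^{ss}$ and Hilbert~90 for $Q$) and hence the injectivity of $H^1(F,G) \hookrightarrow H^2(F,P)$ needed to translate the rational-point question into a statement about $\zeta$. Once the resolution is in hand and the two ingredients from Proposition~\ref{combined} are invoked, the remainder of the argument is a formal manipulation with torus cohomology and the restriction-corestriction formula, so no further technical input is required.
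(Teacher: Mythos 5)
Your argument is essentially the paper's: reduce to the vanishing of the boundary class $\zeta\in H^2(F,P)$ via Proposition~\ref{cd2}(\ref{RP-crit}) (using Proposition~\ref{combined}(\ref{H1})), kill each localization $\zeta_v$ by restriction--corestriction from the local zero-cycles of degree one, and conclude by the injectivity of $H^2(F,P)\to\prod_v H^2(F_v,P)$ from Proposition~\ref{combined}(\ref{H2}). The restriction--corestriction computation and the use of quasi-triviality of $P$ and $Q$ are exactly as in the paper.

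There is, however, one gap: you apply the central extension $1\to P\to H\to G\to 1$ from the discussion preceding Proposition~\ref{cd2}, and Proposition~\ref{cd2}(\ref{RP-crit}) itself, directly to $G$. Both are stated only for $G$ connected \emph{reductive} (the construction via \cite{BK} and \cite{CT} is a resolution of a reductive group), whereas the theorem only assumes $G$ connected and rational, so $G$ may have a nontrivial unipotent radical. The paper therefore begins with a reduction step you omit: since $\cha F=0$, one has $H^1(E,R_u(G))=1$ for every field extension $E/F$ (\cite[III \S2.1, Prop.~6]{Serre:CG}), so the map $H^1(E,G)\to H^1(E,G/R_u(G))$ has trivial kernel; replacing $Z$ by the induced $G/R_u(G)$-torsor, the local zero-cycle hypothesis is preserved and a rational point on the reduced-group torsor yields one on $Z$. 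With this reduction to $G$ reductive (and rational) inserted at the start, your proof coincides with the paper's and is complete.
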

 
\begin{proof} Since $F$ has characteristic zero, the kernel $H^1(F,R_u(G))$ of the map $H^1(F,G)\rightarrow H^1(F,G/R_u(G))$ is trivial by \cite[III \S2.1, Prop.~6]{Serre:CG}; here $R_u(G)$ denotes the unipotent radical of $G$. The analogous statement holds for all field extensions $E/F$. Hence we may assume without loss of generality that $G$ is reductive. The hypotheses of Proposition~\ref{cd2} then hold, by 
Proposition~\ref{combined}(\ref{H1}). 

Let
 $$1\rightarrow P\rightarrow H\rightarrow G\rightarrow 1$$ be a central extension as in the discussion preceding Proposition~\ref{cd2}. As in the proof of that proposition, this gives rise to an exact sequence of pointed sets
 $$1\rightarrow H^1(F,H)\rightarrow H^1(F,G)\rightarrow H^2(F,P), $$
and similarly, 
 $$1\rightarrow H^1(F_v,H)\rightarrow H^1(F_v,G)\rightarrow H^2(F_v,P)$$
 for each discrete valuation $v$ of $F$. By assumption, $Z_{F_v}$ has a zero-cycle of degree one for each such $v$. By restriction-corestriction (as in the proof of Proposition~\ref{cd2}), the class of $Z_{F_v}$ maps to the trivial element in $H^2(F_v,P)$, for each $v$. 
 According to Proposition~\ref{combined}(\ref{H2}), this implies that the class of $Z$ maps to the trivial element in $H^2(F,P)$. Hence
Proposition~\ref{cd2}(\ref{RP-crit}) applies and implies the result.
 \end{proof}

\begin{remark}
For an alternative proof after reducing to the reductive 
case, first note that $Z(F_v) \ne \emptyset$ by 
Proposition~\ref{cd2}(\ref{RP}). The local case (when $F$ is as in~Hypothesis~\ref{2dim field hyp}(\ref{local})) now follows from \cite[Corollary~7.7]{BKG}.  In the case of a function field of a normal curve $\XX$ over an excellent henselian discrete valuation ring $T$ as in~Hypothesis~\ref{2dim field hyp}(\ref{semi-global}), the local case implies that $Z$ has a point over the fraction field of the henselization of the local ring of $\XX$ at any closed point $P$.  Hence $Z$ also has a point over the fraction field of the complete local ring at $P$. It also has a point over $F_\eta$, for each generic point $\eta$ of the closed fiber.  Thus it has a point over the function field of $\XX \times_T \wh T$ by \cite[Theorem~5.10]{HHK:H1}. This case of the theorem then follows from Lemma~\ref{henselian pt la} above.
\end{remark}
  
Below we study local-global principles for zero-cycles on projective homogeneous spaces. We use a criterion for the existence of rational points from~\cite{CTGP}. We begin by recalling some notation and facts (loc. cit., p. 333--335, which closely follows work of Borovoi \cite{B}).

Let $F$ be a field of characteristic zero. Let $G$ be a connected reductive linear algebraic 
group over $F$ and let $Z$  be a projective homogeneous $G$-space. 
Let $\bar{H}$ be the isotropy group of an $\bar{F}$-point of $Z$; note that since $Z$ is projective, $\bar{H}$ is parabolic and hence connected. As in \cite{CTGP}, one can define an associated $F$-torus
 $H^{\tor}$ (this is an $F$-form of the maximal torus quotient of $\bar{H}$).
 Because $Z$ is projective, the $F$-torus $H^{\tor}$ is a quasi-trivial torus by
 \cite[Lemma 5.6]{CTGP}. 
 
As in \cite{CTGP}, one may further define an $F$-kernel $L= (\bar{H},\kappa)$, a cohomology set $H^2(F, L)$, and a class $\eta(Z) \in H^2(F, L)$ associated to $Z$.
This class is a {\em neutral class} if and only if $Z$ comes from 
a $G$-torsor; i.e., if and only if there exists a $G$-torsor $Y$ and a $G$-equivariant morphism $Y\rightarrow Z$.
There is a natural map $t_* : H^2(F, L) \to H^2(F, H^{\tor})$
which is functorial in the field $F$  (loc. cit.), and which sends neutral classes in $H^2(F, L) $ to
the trivial element in $H^2(F,H^{\tor})$. 

The following proposition is an immediate consequence of \cite[Prop. 5.4]{CTGP} and will be the key ingredient in the proof of the theorem below.

\begin{prop}\label{RPcrit}
Suppose that $F$ is a field which satisfies properties~(\ref{cd}),~(\ref{perind}), and~(\ref{H1}) in Proposition~\ref{combined}. Let $G$ be a semisimple simply connected linear algebraic group over $F$, and let $Z$ be a projective homogeneous space under $G$. Then $Z(F) \neq \emptyset$ if and only if
$t_*(\eta(Z))=1 \in  H^2(F, H^{\tor})$. 
\end{prop}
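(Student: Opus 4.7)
The plan is to deduce this directly from \cite[Prop.~5.4]{CTGP} together with property~(\ref{H1}) of Proposition~\ref{combined}. One direction is essentially tautological, while the other is where the hypotheses on $F$ are used in an essential way.

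First I would handle the forward implication. If $Z(F)\neq\emptyset$, pick any $F$-point $z_0\in Z$; then the orbit map $G\to Z$, $g\mapsto g\cdot z_0$, is $G$-equivariant and exhibits $Z$ as coming from the trivial $G$-torsor $G$. Thus $\eta(Z)$ is a neutral class in $H^2(F,L)$. Since $t_*$ is known to send neutral classes to the trivial element of $H^2(F,H^{\tor})$ (as recalled in the paragraph preceding the statement), we obtain $t_*(\eta(Z))=1$.

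For the converse, suppose $t_*(\eta(Z))=1$. The plan is to apply \cite[Prop.~5.4]{CTGP}, whose hypotheses on the ground field coincide with properties~(\ref{cd}),~(\ref{perind}), and~(\ref{H1}) of Proposition~\ref{combined}. That proposition asserts that under these conditions, the vanishing of $t_*(\eta(Z))$ already forces $\eta(Z)$ to be a neutral class. Neutrality, by definition, means that there exists a $G$-torsor $Y$ equipped with a $G$-equivariant morphism $Y\to Z$. Now since $G$ is semisimple simply connected and $F$ satisfies property~(\ref{H1}), we have $H^1(F,G)=1$, so $Y$ is the trivial torsor and admits an $F$-point; its image in $Z$ yields the desired rational point.

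The substantive content is packaged inside \cite[Prop.~5.4]{CTGP}, so the main step is simply verifying that the hypotheses of that result are matched by our assumptions on $F$. This is the only place where any obstacle could arise, but it is a bookkeeping verification rather than a substantive difficulty, which is why the authors regard this proposition as an immediate consequence of the cited result.
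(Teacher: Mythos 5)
Your proposal is correct and follows essentially the same route as the paper: the forward direction via the orbit map at a rational point showing $\eta(Z)$ is neutral, and the converse by invoking \cite[Prop.~5.4]{CTGP} to pass from $t_*(\eta(Z))=1$ to neutrality and then using $H^1(F,G)=1$ to trivialize the resulting $G$-torsor and produce a point on $Z$. No gaps to report.
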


\begin{proof}
If $Z(F)$ contains a rational point $x$ then the map $G\rightarrow Z$ given by the action of $G$ on $x$ shows that $\eta(Z)$ is neutral, and thus $t_*(\eta(Z))=1 \in  H^2(F, H^{\tor})$. Conversely, if $t_*(\eta(Z))=1$, then $\eta(Z)$ is neutral by \cite[Prop. 5.4]{CTGP}; i.e., it comes from a $G$-torsor. By assumption, $H^1(F,G)$ consists of a single element, hence that torsor has an $F$-rational point which maps to a point on $Z$, showing $Z(F)\neq \emptyset$.
\end{proof}

\begin{thm} \label{cd2proj} 
Let $F$ be a field of characteristic zero which satisfies properties~(\ref{cd}),~(\ref{perind}),~(\ref{H1}) in Proposition~\ref{combined}. 
Let  $G$ be a connected linear algebraic 
group over $F$, and let $Z$  be a projective homogeneous $G$-space. 
If $Z$ has  a zero-cycle of degree one, then $Z$ has a rational point. In particular, this assertion holds whenever $F$ is a field as in Hypothesis~\ref{2dim field hyp}.
\end{thm}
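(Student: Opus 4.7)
The plan is to mirror the torsor argument in Theorem~\ref{cd2tors}, but with Proposition~\ref{RPcrit} in place of the torsor criterion of Proposition~\ref{cd2}. First I would reduce to the case where $G$ is semisimple and simply connected. Since $\cha F=0$, the unipotent radical $R_u(G)$ is split unipotent and lies in every parabolic subgroup of $G$, hence in the stabilizer $\bar H$ of any $\bar F$-point of $Z$. So $Z$ is also a projective homogeneous space under the reductive quotient $G/R_u(G)$. The derived group of the latter then acts transitively (the parabolic $\bar H$ contains the connected center), and pulling back along the simply connected cover $\widetilde G\to G^{\mathrm{der}}$, whose kernel is central and hence contained in every parabolic, realizes $Z$ as a projective homogeneous space under the semisimple simply connected group $\widetilde G$. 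Replace $G$ by $\widetilde G$.

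Next I would apply Proposition~\ref{RPcrit}: under the hypotheses on $F$, the scheme $Z$ has an $F$-rational point if and only if the class $\alpha:=t_*(\eta(Z))$ vanishes in $H^2(F,H^{\tor})$. By \cite[Lemma~5.6]{CTGP}, $H^{\tor}$ is a quasi-trivial $F$-torus, so $H^{\tor}\cong R_{K/F}\mathbb G_m$ for some finite \'etale $F$-algebra $K$. For each closed point $P$ of $Z$ with residue field $E_P$, the given $E_P$-point of $Z$ shows that $\eta(Z_{E_P})$ is neutral, and so functoriality of $\eta$ and of $t_*$ under base change gives $\alpha|_{E_P}=0$ in $H^2(E_P,H^{\tor}_{E_P})$.

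The final step is a standard restriction--corestriction argument. Shapiro's lemma identifies $H^2(F,H^{\tor})$ with $\Br(K)$, and one has the formula $\cres_{E/F}\circ\operatorname{res}_{E/F}=[E:F]\cdot\mathrm{id}$ for every finite extension $E/F$. Fixing a zero-cycle $\sum_i n_iP_i$ of degree one on $Z$, we obtain
\[
\alpha \;=\; \Bigl(\sum_i n_i[E_{P_i}:F]\Bigr)\alpha \;=\; \sum_i n_i\,\cres_{E_{P_i}/F}\bigl(\alpha|_{E_{P_i}}\bigr) \;=\; 0,
\]
and Proposition~\ref{RPcrit} yields $Z(F)\neq\emptyset$. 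The ``in particular'' clause is immediate from Proposition~\ref{combined}. The main delicate point is the functoriality of the invariant $\eta$ along the reduction to $\widetilde G$ and along the extensions $F\hookrightarrow E_P$; this is however built into the $F$-kernel formalism of \cite{B} and \cite{CTGP} recalled before Proposition~\ref{RPcrit}.
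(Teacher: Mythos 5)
Your proposal is correct and follows essentially the same route as the paper: reduce to a semisimple simply connected group (the paper quotients by the radical $R(G)$ and passes to the simply connected cover, exactly as you do in two steps), then apply Proposition~\ref{RPcrit} to see that each $E_{P_i}$-point kills $t_*(\eta(Z))$ over $E_{P_i}$, and conclude by restriction--corestriction in $H^2(F,H^{\tor})$ before applying Proposition~\ref{RPcrit} once more. Your explicit Shapiro/Brauer-group identification is an optional elaboration of the same restriction--corestriction step the paper invokes.
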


\begin{proof} 
Let $R(G)$ be the radical of $G$, i.e., the maximal connected solvable subgroup of $G$. This is contained in any parabolic subgroup of $G$.
Since $Z$ is projective, the action of $G$ on $Z$ factors through $G^{ss} = G/R(G)$.
Let  $G^{sc} \to G^{ss}$ be the simply connected cover of 
$G^{ss}$. Thus we may view $Z$ as a projective homogenous $G^{sc}$-space. 
Replacing $G$ by $G^{sc}$, we may therefore assume that $G$ is semisimple and simply connected 
(cf. proof of \cite[Corollary 5.7]{CTGP}). 

If  $Z$ has a zero-cycle of degree one,
  there exist finite field extensions $F_i/F$ such that $Z(F_i) \neq \emptyset$ for all $i$
and the g.c.d. of the degrees of $F_i/F$ is~$1$.  
Since $Z(F_i) \neq \emptyset$, the image of $t_*(\eta(Z))$ in $H^2(F_i, H^{\tor})$ is trivial by Proposition~\ref{RPcrit}. 
Using a restriction-corestriction argument, we conclude that $t_*(\eta(Z))$ is trivial 
in $H^2(F, H^{\tor})$. One then concludes that $Z(F)\neq \emptyset$ by a second application of Proposition~\ref{RPcrit}. The last statement follows from Proposition~\ref{combined}.
\end{proof}

If $F$ is the function field of a $p$-adic curve, there are projective homogeneous spaces under simply connected groups which admit zero-cycles of degree one, but with no rational points. See \cite{Parimala:Asian}.

\begin{thm}\label{LGPproj}
Let $F$ be a field as in Hypothesis~\ref{2dim field hyp}, let
$G$ be  a connected linear algebraic group over $F$, and
let $Z$ be a projective homogeneous $G$-space. 
 If   $Z$ has a zero-cycle of degree one over 
$F_v$ for each discrete valuation $v$ of $F$,  then $Z$ has a  rational point over $F$.
\end{thm}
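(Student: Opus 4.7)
The strategy is to combine the classification of projective homogeneous spaces via the $F$-kernel machinery used in Theorem~\ref{cd2proj} with the local-global injectivity for quasi-trivial tori from Proposition~\ref{combined}(\ref{H2}), in direct analogy with the way Theorem~\ref{cd2tors} was deduced from Proposition~\ref{cd2}. First I would reduce to the case where $G$ is semisimple and simply connected, exactly as in the proof of Theorem~\ref{cd2proj}: the action of $G$ on the projective $G$-space $Z$ factors through $G^{ss} = G/R(G)$, and we may lift along $G^{sc} \to G^{ss}$, so that $Z$ becomes a projective homogeneous space for a semisimple simply connected group. By Proposition~\ref{combined} the field $F$ (and any finite extension of it) satisfies hypotheses (\ref{cd}), (\ref{perind}) and (\ref{H1}) of Proposition~\ref{RPcrit}, so that proposition is available.

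Associated to $Z$ we have the quasi-trivial $F$-torus $H^{\tor}$, the cohomology set $H^2(F,L)$ with distinguished class $\eta(Z)$, and the functorial map $t_* \colon H^2(F,L) \to H^2(F, H^{\tor})$. By Proposition~\ref{RPcrit}, $Z(F) \neq \emptyset$ if and only if $t_*(\eta(Z))$ is trivial in $H^2(F, H^{\tor})$. Since $H^{\tor}$ is a quasi-trivial torus, Proposition~\ref{combined}(\ref{H2}) asserts that the diagonal map
\[
H^2(F, H^{\tor}) \longrightarrow \prod_{v} H^2(F_v, H^{\tor})
\]
is injective, where $v$ ranges over all discrete valuations of $F$. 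Hence it suffices to show that the image of $t_*(\eta(Z))$ in $H^2(F_v, H^{\tor})$ vanishes for every~$v$.

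Fix a discrete valuation $v$ of $F$. By hypothesis, $Z_{F_v}$ admits a zero-cycle of degree one, so there exist finite field extensions $E_{v,1}, \ldots, E_{v,r}$ of $F_v$ with $Z(E_{v,i}) \neq \emptyset$ for every $i$ and with $\gcd_i [E_{v,i} : F_v] = 1$. For each $i$, the existence of an $E_{v,i}$-rational point on $Z$ makes $\eta(Z_{E_{v,i}})$ a neutral class (it comes from the orbit map $G_{E_{v,i}} \to Z_{E_{v,i}}$ through that point), and so by functoriality of $\eta$ and $t_*$ the restriction of $t_*(\eta(Z))$ to $H^2(E_{v,i}, H^{\tor})$ is trivial. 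A standard restriction-corestriction argument in Galois cohomology of the torus $H^{\tor}$, applied with the coprime degrees $[E_{v,i}:F_v]$, then forces $t_*(\eta(Z))|_{F_v} = 0$. Performing this for each $v$ and using the injectivity above gives $t_*(\eta(Z)) = 0 \in H^2(F, H^{\tor})$, and Proposition~\ref{RPcrit} then yields an $F$-rational point on $Z$.

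The main subtlety is not really arithmetic but conceptual: one must verify that the formalism of $F$-kernels, the class $\eta(Z)$ and the map $t_*$ are all functorial under the base changes $F \to F_v$ and $F_v \to E_{v,i}$, so that "neutral" is preserved and restriction-corestriction in $H^2(\,\cdot\,, H^{\tor})$ can be invoked. Once this is in hand the proof is a direct projective-homogeneous analogue of Theorem~\ref{cd2tors}, with Proposition~\ref{RPcrit} playing the role there played by Proposition~\ref{cd2}(\ref{RP-crit}).
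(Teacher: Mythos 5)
Your proposal is correct and follows essentially the same route as the paper: reduce to $G$ semisimple simply connected as in Theorem~\ref{cd2proj}, use a restriction-corestriction argument on the quasi-trivial torus $H^{\tor}$ to show $t_*(\eta(Z))$ dies in each $H^2(F_v,H^{\tor})$, invoke Proposition~\ref{combined}(\ref{H2}) for injectivity of the diagonal map, and conclude via Proposition~\ref{RPcrit}. Your added care about functoriality of $\eta$ and $t_*$ under base change (and that only the easy direction of Proposition~\ref{RPcrit} is used over the local fields, which need not satisfy Hypothesis~\ref{2dim field hyp}) is exactly the right point to check and is consistent with the paper's argument.
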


\begin{proof} 
As in the first paragraph of the proof of Theorem~\ref{cd2proj}, we may reduce to the case that $G$ is semisimple simply connected. If $Z$ has a zero-cycle of degree one over each $F_v$,
then a corestriction-restriction argument shows that the class
$t^*(\eta(Z)) =1 \in H^2(F, H^{\tor})$  
has trivial image in each $H^2(F_v,H^{\tor})$.
Since $H^{\tor}$ is  a quasi-trivial torus, Proposition~\ref{combined}(\ref{H2}) implies that
$t^*(\eta(Z)) =1 \in H^2(F, H^{\tor})$.
An application of Proposition~\ref{RPcrit} then yields $Z(F) \neq \emptyset$.
\end{proof}

\bigskip

\noindent{\bf Author Information:}\\

\noindent Jean-Louis Colliot-Th\'el\`ene\\
Laboratoire de Math\'ematiques d'Orsay, Univ.\ Paris-Sud, CNRS, Universit\'e Paris-Saclay, 91405 Orsay, France\\
email: jlct@math.u-psud.fr

\medskip

\noindent David Harbater\\
Department of Mathematics, University of Pennsylvania, Philadelphia, PA 19104-6395, USA\\
email: harbater@math.upenn.edu

\medskip

\noindent Julia Hartmann\\
Department of Mathematics, University of Pennsylvania, Philadelphia, PA 19104-6395, USA\\
email: hartmann@math.upenn.edu

\medskip

\noindent Daniel Krashen\\
Department of Mathematics, University of Georgia, Athens, GA 30602, USA\\
email: dkrashen@math.uga.edu

\medskip

\noindent R.~Parimala\\
Department of Mathematics and Computer Science, Emory University, Atlanta, GA 30322, USA\\
email: parimala@mathcs.emory.edu

\medskip

\noindent V.~Suresh\\
Department of Mathematics and Computer Science, Emory University, Atlanta, GA 30322, USA\\
email: suresh@mathcs.emory.edu

\medskip

\noindent The authors were supported on NSF collaborative FRG grant: DMS-1463733 (DH and JH), DMS-1463901 (DK), DMS-1463882 (RP and VS).  Additional support was provided by NSF collaborative FRG grant DMS-1265290 (DH); NSF RTG grant DMS-1344994 (DK); NSF DMS-1401319 (RP); NSF DMS-1301785 (VS); and a Simons Fellowship (JH).

\end{document}